\newcommand{\tM}{\widetilde{\mathbb{M}}}
\newcommand{\tcalM}{\widetilde{\mathcal{M}}}
\newcommand{\m}{m}
\newcommand{\bigo}{{\rm O}}
\newcommand{\smallo}{{\rm o}}
\newcommand{\un}{{\mathbf{1}}}
\newcommand{\V}{\mathbb V}
\renewcommand{\le}{\leqslant}
\renewcommand{\leq}{\leqslant}
\renewcommand{\ge}{\geqslant}
\renewcommand{\geq}{\geqslant}
\newcommand{\cadlag}{c\`adl\`ag }
\newcommand{\A}{\mathbb{A}}
\newcommand{\E}{\mathbb{E}}
\newcommand{\M}{\mathbb{M}}
\newcommand{\R}{\mathbb{R}}
\renewcommand{\P}{\mathbb{P}}
\renewcommand{\L}{\mathbb{L}}
\newcommand{\D}{\mathbb{D}}
\newcommand{\calM}{\mathcal{M}}
\newcommand{\calD}{\mathcal{D}}
\newcommand{\calF}{\mathcal{F}}
\newcommand{\calN}{\mathcal{N}}
\newcommand{\calR}{\mathcal{R}}
\newcommand{\eps}{\varepsilon}
\newcommand{\ph}{\varphi}
\newcommand{\Var}{\mathbb V}
\newcommand{\Card}{{\rm card}}
\renewcommand{\d}{ {\, d \,}}
\newcommand{\eqdef}{:=}
\newcommand{\one}{{\mathbf{1}}}
\newcommand{\dps}{\displaystyle}
\newcommand{\abs}[1]{\mathopen{}\left | #1\right |\mathclose{}}
\newcommand{\set}[1]{\left\{#1\right\}}
\newcommand{\p}[1]{\mathopen{}\left(#1\right)\mathclose{}}
\renewcommand{\b}[1]{\left [ \, #1 \, \right ]}
\newcommand{\norm}[1]{\left\Vert#1\right\Vert}
\theoremstyle{plain}
\newtheorem{The}{Theorem}[section]
\newtheorem{Lem}[The]{Lemma}
\newtheorem{Pro}[The]{Proposition}
\newtheorem{Cor}[The]{Corollary}
\newtheorem{Def}[The]{Definition}
\newtheorem{AssD}{Assumption}
\newtheorem{AssDp}{Assumption}
\newtheorem{AssE}{Assumption}
\numberwithin{equation}{section}
\theoremstyle{definition}
\newtheorem{Rem}[The]{Remark}
\numberwithin{equation}{section}
\begin{document}

\begin{center}
{\sc \Large A Central Limit Theorem for Fleming-Viot Particle Systems with Hard Killing\footnote{This work was partially supported by the French Agence Nationale de la Recherche, under grant ANR-14-CE23-0012, and by the European Research Council under the European Union's Seventh Framework Programme (FP/2007-2013) / ERC Grant Agreement number 614492.}}
\vspace{0.5cm}

\end{center}

{\bf Fr\'ed\'eric C\'erou\footnote{Corresponding author.}}\\
{\it INRIA Rennes \& IRMAR, France }\\
\textsf{frederic.cerou@inria.fr}
\bigskip

{\bf Bernard Delyon}\\
{\it Universit\'e Rennes 1 \& IRMAR, France }\\
\textsf{bernard.delyon@univ-rennes1.fr}
\bigskip

{\bf Arnaud Guyader}\\
{\it Universit\'e Pierre et Marie Curie \& CERMICS, France }\\
\textsf{arnaud.guyader@upmc.fr}
\bigskip

{\bf Mathias Rousset}\\
{\it INRIA Rennes \& IRMAR, France }\\
\textsf{mathias.rousset@inria.fr}
\bigskip

\medskip

\begin{abstract}
\noindent {\rm Fleming-Viot type particle systems represent a classical way to approximate the distribution of a Markov process with killing, given that it is still alive at a final deterministic time. In this context, each particle evolves independently according to the law of the underlying Markov process until its killing, and then branches instantaneously on another randomly chosen particle. While the consistency of this algorithm in the large population limit has been recently studied in several articles, our purpose here is to prove Central Limit Theorems under very general assumptions. For this, we only suppose that the particle system does not explode in finite time, and that the jump and killing times have atomless distributions. In particular, this includes the case of elliptic diffusions with hard killing.
\medskip

\noindent \emph{Index Terms} --- Sequential Monte Carlo, Interacting particle systems, Process with killing\medskip

\noindent \emph{2010 Mathematics Subject Classification}: 82C22, 82C80, 65C05, 60J25, 60K35, 60K37}

\end{abstract}

\tableofcontents

\section{Introduction}\label{intro}

Let $X=(X_t)_{t\geq 0}$ denote a Markov process evolving in a state space of the form $ F \cup \set{\partial}$, 
where $\partial \notin F$ is an absorbing state. $X$ evolves in $F$ until it reaches $\partial$ and then remains trapped there forever.
Let us also denote $\tau_\partial$ the associated killing time, meaning that
$$\tau_{\partial}\eqdef  \inf\{t\geq 0, X_t=\partial \}.$$
Given a deterministic final time $T>0$, we are interested both in the distribution of $X_T$ given that it has still not been killed at time $T$, denoted
$$\eta_T\eqdef  {\cal L}(X_T|\tau_{\partial }>T),$$
and in the probability of this event, that is
$$p_T\eqdef  \P(\tau_{\partial }>T),$$
with the assumption that $p_T>0$. Without loss of generality, we will assume for simplicity that $\P(X_0 = \partial)=0$ and $p_0 =1$ so that $\eta_0 = {\cal L}(X_0)$. Let us stress that in all this paper, $T$ is held fixed and finite.\medskip 

A crude Monte Carlo method approximating these quantities consists in:
\begin{itemize}
\item simulating $N$ i.i.d.~~random variables, also called particles in the present work, 
$$X_0^1,\dots,X_0^N\ \overset{\rm i.i.d.~}{\sim}\ \eta_0,$$ 
\item letting them evolve independently according to the dynamic of the underlying process $X$, 
\item and eventually considering the estimators
$$\hat{\eta}_T^N\eqdef  \frac{\sum_{i=1}^N \un_{X_T^i\in F}\ \delta_{X_T^i}}{\sum_{i=1}^N\un_{X_T^i\in F}}\hspace{1cm}\mbox{and}\hspace{1cm}\hat{p}_T^N\eqdef  \frac{\sum_{i=1}^N\un_{X_T^i\in F}}{N},$$
with the convention that $0/0= 0$.
\end{itemize}
It is readily seen that these estimators are not relevant for large $T$, typically when $T\gg\E[\tau_{\partial }]$, since one has then to face a rare event estimation problem. A possible way to tackle this issue is to approximate the quantities at stake through a Fleming-Viot type particle system \cite{bhim96,v14}. Under Assumptions~\ref{ass:D} and~\ref{ass:E} that will be detailed below, the following process is well defined for any number of particles $N\geq 2$:
\begin{Def}[Fleming-Viot particle system]\label{def:ips}
The Fleming-Viot particle system $(X^1_t, \cdots, X^N_t)_{t \in [0,T]}$ is the Markov process with state space $F^N$ defined by the following set of rules.
\begin{itemize}
\item Initialization: consider $N$ i.i.d.~~particles
\begin{equation}\label{lkachis}
X_0^1,\dots,X_0^N\ \overset{\rm i.i.d.~}{\sim}\ \eta_0,
\end{equation}
\item Evolution and killing: each particle evolves independently according to the law of the underlying Markov process $X$ until one of them hits  $\partial$ (or the final time $T$ is reached),
\item Branching (or rebirth, or splitting): the killed particle is taken from $\partial $, and is given instantaneously the state of one of the $(N-1)$ other particles (randomly uniformly chosen),
\item and so on until final time $T$.
\end{itemize}
\end{Def}
Finally, we consider the estimators
$$\eta_T^N\eqdef  \frac{1}{N}\sum_{i=1}^N\delta_{X_T^i}\hspace{1cm}\mbox{and}\hspace{1cm}p_T^N\eqdef  \left(1-\tfrac{1}{N}\right)^{N \calN_T},$$
where $N \calN_T$ is the total number of branchings of the particle system until final time $T$. In other words, $\calN_T$ is the empirical mean number of branchings per particle until final time $T$:
$$
\calN_T \eqdef \frac{1}{N} \Card \set{\text{branching times}\leq T}.
$$

Under very general assumptions, Villemonais~\cite{v14} proves among other things that $p_T^N$ (or equivalently $e^{- \calN_T }$) converges in probability to $p_T$ when $N$ goes to infinity, and that $\eta_T^N$ converges in law to $\eta_T$. In~\cite{cdgr1}, we went one step further and established central limit results for $\eta_T^N$ and $p_T^N$. For this, we had to make two specific assumptions. The first one is a ``soft killing'' assumption, meaning that one can define a bounded intensity of being killed when the process is at point $x\in F$. The second one is a so-called ``bounded carr\'e du champ'' assumption and is related to the regularity of the underlying Markov process.\medskip

The purpose of this paper is to generalize the central limit results given in \cite{cdgr1} for $\eta_T^N$ and $p_T^N$ under arguably minimal assumptions. In particular, it includes the case of elliptic diffusive processes killed when hitting the boundary of a given domain. This latter case is usually called ``hard killing'' in the literature and this kind of situation was not covered by \cite{cdgr1}.  \medskip

The rest of the paper is organized as follows. Section 2 details our assumptions, exposes the main results of the paper, and illustrates a possible context of application for a process with hard killing. Section 3 is dedicated to the proof of the central limit theorem, while Section 4 gathers some technical results.

\section{Main result}\label{AMEKCN}
\subsection{Notation and assumptions}\label{zcijo}

For any bounded $\ph:F\to\R$ and $t \in [0,T]$, we consider the unnormalized measure
$$\gamma_t(\ph)\eqdef  p_t \eta_t(\ph)=\E[\ph(X_t)\un_{t<\tau_\partial}],$$
with $X_0\sim\eta_0=\gamma_0$. Note that for any $t\in[0,T]$, one has $p_t=\P(\tau_\partial >t)=\gamma_t(\un_F),$ and recall that $p_0=1$ by assumption. The associated empirical approximation is then given by
$$\gamma_T^N\eqdef  p_T^N \eta_T^N.$$
Remark that $\gamma_0^N=\eta_0^N$. \medskip

For simplicity, we assume that $F$ is a measurable subset of some reference Polish space, and that for each initial condition, $X$ is a \cadlag process in $F \cup \set{\partial}$ satisfying the time-homogeneous Markov property, with $\partial$ being an absorbing state. Its probability transition is denoted $Q$, meaning that there is a semi-group operator $(Q^t)_{t \geq 0}$ defined for any bounded measurable function $\ph:F\to\R$, any $x\in F$ and any $t\geq 0$, by
$$Q^t\ph(x) \eqdef \E[\ph(X_t)|X_0=x].$$
By convention, in the above, the test function $\ph$ defined on $F$ is extended on $F\cup\{\partial\}$ by setting $\ph(\partial)=0$. Thus, we have $Q^t\ph( \partial )=0$  for all $t\geq 0$. This equivalently defines a sub-Markovian semi-group on $F$ also denoted $(Q^t)_{t \geq 0}$. \medskip

Furthermore, for any probability distribution $\mu$ on $F$ and any bounded measurable function $\ph:F\to\R$, the standard notation $\Var_\mu(\ph)$ stands for the variance of the random variable $\ph(Y)$ when $Y$ is distributed according to $\mu$, i.e.
$$\Var_\mu(\ph)\eqdef  \Var(\ph(Y))=\E[\ph(Y)^2]-\E[\ph(Y)]^2=\mu(\ph^2)-\mu(\ph)^2.$$

Our fundamental assumptions can now be detailed. The first one is designed to ensure that two different particles
never jump nor branch at the same time.
\begin{AssD}\label{ass:D} This assumption has two parts:
\begin{enumerate}
\item[(i)] For any initial condition $x \in F$, the jump times of the \cadlag Markov process $t \mapsto X_t\in F \cup \set{\partial}$ have an atomless distribution:
$$
\P\p{ X_{t^-} \neq X_t |X_0=x} = 0 \qquad \forall t \geq 0.
$$
\item[(ii)] There exists a space $\calD$ of bounded measurable real-valued functions on $F$,
which contains at least the indicator function $\un_F$, and such that for any $\ph \in \calD$, the mapping $(x,t) \mapsto Q^{t}(\ph)(x)$ is continuous on $F \times \R_+$.
\end{enumerate}
\end{AssD}

\begin{Rem}\label{alzichachi}
Note that Conditions (i) and (ii) in Assumption~\ref{ass:D} both imply that for any initial condition $x \in F$, the killing time $\tau_\partial$ has an atomless distribution in $[0,+\infty)$. Indeed, for (i), if $t=\tau_\partial$ then obviously $X_{t^-} \neq X_t$ and we conclude that this event happens with probability 0 at any deterministic time $t$. Equivalently, taking $\ph = \un_F$ in~$(ii)$ implies that $t \mapsto \P\p{ \tau_\partial > t |X_0=x}$ is continuous. Note that $ \tau_\partial = + \infty$ may have positive probability.
\end{Rem}

\begin{Rem}
In Section \ref{sec:wellposed}, we present a weaker but less practical version of Assumption~\ref{ass:D}, named Assumption~\ref{ass:Dp}. Lemma \ref{mzoecj} ensures that \ref{ass:D} implies \ref{ass:Dp}. As will be explained, all the results of the present paper are in fact obtained under Assumption~\ref{ass:Dp}.
\end{Rem}

Our second assumption ensures the  existence of the particle system at all time.
\begin{AssE}\label{ass:E} The particle system of Definition~\ref{def:ips} is well-defined in the sense that $\P(\calN_T < + \infty)=1$.
\end{AssE}

The following elementary result will be useful.
\begin{Lem}\label{lem:pcont}
Under Condition~$(i)$ or~$(ii)$ of Assumption~\ref{ass:D}, the non-increasing mapping $t \mapsto p_t=\P\p{\tau_\partial > t}$ 
is continuous and strictly positive on $[0,T]$. Under Assumption \ref{ass:E}, the non-increasing jump process  $t \mapsto p_t^N$ is strictly positive on $[0,T]$.
\end{Lem}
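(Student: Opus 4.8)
\emph{Proof strategy.} The strict positivity of $t\mapsto p_t$ will be immediate: the events $\set{\tau_\partial>t}$ decrease with $t$, so the map is non-increasing, and together with the standing assumption $p_T>0$ and $p_0=1$ this gives $0<p_T\le p_t\le p_0=1$ for every $t\in[0,T]$. The only substantive point is therefore continuity, and I would extract it from a disintegration over the initial condition:
$$
p_t=\E\b{\P(\tau_\partial>t\mid X_0)}=\int_F q_t(x)\,\eta_0(dx),\qquad q_t(x)\eqdef\P(\tau_\partial>t\mid X_0=x)=Q^t\un_F(x),
$$
the last equality because $X_t\in F$ exactly when $t<\tau_\partial$. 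The map $x\mapsto q_t(x)$ is measurable with values in $[0,1]$, so by dominated convergence against $\eta_0$ it will suffice to show that $t\mapsto q_t(x)$ is continuous for each fixed $x\in F$.

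For fixed $x$, $t\mapsto q_t(x)$ is the survival function of $\tau_\partial$ under $\P(\,\cdot\mid X_0=x)$: it is non-increasing and right-continuous, and taking monotone limits of the events $\set{\tau_\partial>s}$ as $s\uparrow t$ yields the left limit $q_{t^-}(x)=q_t(x)+\P(\tau_\partial=t\mid X_0=x)$. Hence $t\mapsto q_t(x)$ is continuous as soon as the law of $\tau_\partial$ given $X_0=x$ carries no atom in $[0,\infty)$. Under Condition~$(i)$ this is precisely the content of Remark~\ref{alzichachi}; under Condition~$(ii)$ one instead reads off continuity of $t\mapsto q_t(x)=Q^t\un_F(x)$ directly, since $\un_F\in\calD$. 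In either case $t\mapsto q_t(x)$ is continuous, and the dominated convergence argument above closes the first claim on $[0,T]$ (indeed on all of $\R_+$).

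For the particle system, recall $p_t^N=\p{1-\tfrac1N}^{N\calN_t}$, where $N\calN_t=\Card\set{\text{branching times}\le t}$ is non-decreasing, integer-valued, and piecewise constant in $t$. Since $N\ge 2$ we have $0<1-\tfrac1N<1$, so $t\mapsto p_t^N$ is non-increasing and piecewise constant (a pure-jump process), and $p_t^N\ge\p{1-\tfrac1N}^{N\calN_T}$ for every $t\in[0,T]$. By Assumption~\ref{ass:E}, $\calN_T<+\infty$ almost surely; on that full-probability event $N\calN_T$ is a finite integer, the lower bound is strictly positive, and therefore $p_t^N>0$ for all $t\in[0,T]$ almost surely.

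I do not expect a genuine obstacle here. The one point requiring care is the elementary observation that a right-continuous non-increasing function is continuous exactly when its associated measure has no atom; this is what lets Condition~$(i)$ (through Remark~\ref{alzichachi}) and Condition~$(ii)$ be used interchangeably in the argument above.
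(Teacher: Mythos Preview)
Your proposal is correct and follows essentially the same route as the paper: both argue via the disintegration $p_t=\int_F\P(\tau_\partial>t\mid X_0=x)\,\eta_0(dx)$, invoke Remark~\ref{alzichachi} (or the definition of $\calD$) for pointwise continuity of $t\mapsto Q^t\un_F(x)$, and handle strict positivity of $p_t$ and $p_t^N$ by the standing assumption $p_T>0$ and the definition of $p_t^N$ respectively. Your write-up is in fact more explicit than the paper's, spelling out the dominated-convergence step and the left-limit analysis of the survival function.
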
 
\begin{proof}
As mentioned in Remark \ref{alzichachi}, Assumptions~\ref{ass:D}$(i)$ and \ref{ass:D}$(ii)$ both ensure the continuity of 
$t \mapsto \P\p{\tau_\partial > t|X_0=x} $ for all $x\in F$.  
And the continuity result now comes from $p_t=\P(\tau_\partial >t)=\int \P(\tau_\partial>t| X_0=x) \eta_0(dx)$. The proof under Assumption~\ref{ass:Dp}(i) is similar. Besides, recall that $p_T$ is strictly positive by assumption. The subsequent assertions are clearly satisfied by definition of $p_t^N$.
\end{proof}

\begin{Rem}
 We will see in Lemma~\ref{lem:decomp} that under Assumptions~\ref{ass:D} and~\ref{ass:E}, one has $\E[p^N_T] = p_T$, which implies in particular that $p_T$ is indeed strictly positive.
\end{Rem}

\subsection{Main result}\label{sec:main}

We keep the notation of Section~\ref{intro}. In particular, $(X^1_t, \ldots, X^N_t)_{t\geq 0}$ denotes the Fleming-Viot particle system. %

\begin{Def}
For any $n \in \set{1, \ldots , N}$ and any $k \geq 0$, we denote by $\tau_{n,k}$
the $k$-th branching time of particle $n$, with the convention $\tau_{n,0} =0$. Moreover, for any $j \geq 0$, we denote by $\tau_{j}$ the $j$-th branching time of the whole system of particles, with the convention $\tau_{0} =0$.
\end{Def}

 Accordingly, the processes
$
\calN^{n}_t\eqdef   \sum_{k\geq 1} \un_{\tau_{n,k}\leq t}
$
and  
$$
\calN_t\eqdef   \frac1N \sum_{n=1}^N \calN^{n}_t = \frac1N \sum_{j\geq 1} \un_{\tau_{j}\leq t}
$$
are \cadlag counting processes that correspond respectively to the number of branchings of particle $n$ before time $t$, and to the total number of branchings per particle of the whole particle system before time $t$.\medskip

As mentioned before, we can then define the empirical measure associated to the particle system as 
$ \eta^N_t\eqdef   \frac{1}{N} \sum_{n=1}^N \delta_{X_t^n},$
while the estimate of the probability that the process is still not killed at time $t$ is denoted 
$p^N_t\eqdef   (1-\tfrac{1}{N})^{N \calN_t},$
and the unnormalized empirical measure is defined as $\gamma^N_t\eqdef   p^{N}_t \eta^N_t$.\medskip

As will be recalled in Proposition \ref{pro:estimate} and already noticed by Villemonais in \cite{v14}, their large $N$ limits are respectively 
$$\eta_t(\varphi)\eqdef\E [\varphi(X_t)|X_t\neq\partial],\;
p_t\eqdef \P(X_t\neq\partial), \mbox{ and }
\gamma_t(\varphi) \eqdef \E[\varphi(X_t)\un_{X_t\neq\partial}].$$
We clearly have $\eta_t(\varphi)=\gamma_t(\varphi)/\gamma_t(\un_F)=\gamma_t(\varphi)/p_t$ and $\gamma_t(\varphi)=\eta_0(Q^t \varphi)$.\medskip

We can now expose the main result of the present paper. As usual, $\calN(m,\sigma^2)$ denotes the normal distribution with mean $m$ and variance $\sigma^2$. As mentioned before, $\Var_{\eta}(\ph)$ denotes the variance of $\ph$ with respect to the distribution $\eta$.

\begin{The}\label{gamma}
Let us denote by $\overline{\cal D}$ the closure with respect to the norm $\norm{\cdot}_{\infty}$ of the space ${\cal D}$ satisfying Condition~$(ii)$ of Assumption~\ref{ass:D}. Then, under Assumptions~\ref{ass:D} and~\ref{ass:E}, for any $\ph$ in $\overline{\cal D}$, one has the convergence in distribution
$$\sqrt{N}\left(\gamma_T^N(\ph)-\gamma_T(\ph)\right)\xrightarrow[N\to\infty]{\cal D}{\cal N}(0,\sigma_T^2(\ph)),$$
where $\sigma_T^2(\ph)$ is defined by
$$\sigma_T^2(\ph) \eqdef p^2_T \Var_{\eta_T}(\ph) - p_T^2\ln(p_T) \, \eta_T(\ph)^2 - 2\int_0^T \Var_{\eta_{t}}(Q^{T-t}(\ph)) p_t dp_t.$$
\end{The}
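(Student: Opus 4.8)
The plan is to derive the statement from a martingale central limit theorem, after a reduction to $\ph\in\calD$ and the construction of a suitable martingale attached to the particle system. First I would reduce to $\ph\in\calD$: since $\ph\mapsto\sigma^2_T(\ph)$ is clearly continuous for $\norm{\cdot}_\infty$, and since, by Proposition~\ref{pro:estimate} together with an elementary second moment estimate, $\sqrt N\,(\gamma^N_T(\ph)-\gamma^N_T(\ph_k))$ is small in $L^2$ uniformly in $N$ whenever $\ph_k\to\ph$ uniformly, a routine limiting argument transfers the CLT from $\calD$ to its $\norm{\cdot}_\infty$-closure $\overline{\calD}$. So fix $\ph\in\calD$ and set $M^N_t\eqdef\gamma^N_t(Q^{T-t}\ph)=p^N_t\,\eta^N_t(Q^{T-t}\ph)$ for $t\in[0,T]$. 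Using Assumption~\ref{ass:D}(ii) — which makes $(x,t)\mapsto Q^t\ph(x)$ continuous and guarantees the existence of the left limits of the particle contributions $Q^{T-t}\ph(X^n_t)$ at killing times — together with the exact compensation, at each branching, between the discount factor $1-\tfrac1N$ carried by $p^N_t$ and the uniform resampling among the $N-1$ survivors, one shows that $M^N$ is a \cadlag $(\calF^N_t)$-martingale with $M^N_0=\eta^N_0(Q^T\ph)$ and $M^N_T=\gamma^N_T(\ph)$; this is essentially the content of Lemma~\ref{lem:decomp}, and it relies on the fact that the contribution $Q^{T-t}\ph$ of the killed particle evaluated at its killing position vanishes, being bounded there by $\norm{\ph}_\infty\,Q^{T-t}\un_F$. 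Since $\E[M^N_0]=\eta_0(Q^T\ph)=\gamma_T(\ph)$ we may decompose
$$\sqrt N\big(\gamma^N_T(\ph)-\gamma_T(\ph)\big)=\underbrace{\sqrt N\big(M^N_0-\E[M^N_0]\big)}_{(\mathrm{I})}+\underbrace{\sqrt N\big(M^N_T-M^N_0\big)}_{(\mathrm{II})},$$
where $(\mathrm{I})$ is $\sqrt N$ times the centred empirical mean of the i.i.d.\ bounded variables $Q^T\ph(X^n_0)$ and converges, by the classical CLT, to $\mathcal N\big(0,\Var_{\eta_0}(Q^T\ph)\big)$.

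It remains to treat the martingale increment $(\mathrm{II})$ via the central limit theorem for \cadlag martingales (Rebolledo, or Jacod--Shiryaev). Two conditions must be checked: that $\sqrt N\,\sup_{t\le T}\abs{\Delta M^N_t}\to 0$ in probability — immediate, since every jump of $M^N$, whether produced by a jump of $X$ or by a branching, is of size $O(1/N)$ — and that the predictable quadratic variation $N\langle M^N\rangle_T$ converges in probability to a deterministic limit $v_T(\ph)$. To identify this limit I would split $N\langle M^N\rangle_T$ into the contribution of the free evolution of the particles (its continuous part plus the compensator of the jumps of $X$; here Assumption~\ref{ass:D}(i) ensures that branchings and $X$-jumps a.s.\ never coincide, so there are no cross terms) and the contribution of the branchings. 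The branching part equals $N\sum_{\tau_j\le T}\E\big[(\Delta M^N_{\tau_j})^2\mid\calF^N_{\tau_j^-}\big]$; computing the variance of a single resampling gives $\E\big[(\Delta M^N_{\tau_j})^2\mid\calF^N_{\tau_j^-}\big]\sim N^{-2}(p^N_{\tau_j})^2\,\Var_{\eta^N_{\tau_j}}(Q^{T-\tau_j}\ph)$, so this part equals $\int_{(0,T]}(p^N_s)^2\,\Var_{\eta^N_s}(Q^{T-s}\ph)\,d\calN_s$, which by $\eta^N_s\to\eta_s$, $p^N_s\to p_s$ and the key identity $\calN_s\to-\ln p_s$ (equivalently $\tfrac1N\sum_{\tau_j\le s}g(\tau_j)=\int_0^s g\,d\calN\to-\int_0^s g\,dp/p$) converges to $-\int_0^T p_t\,\Var_{\eta_t}(Q^{T-t}\ph)\,dp_t$.

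The free-evolution part is handled piece by piece along each particle's genealogy, using the identity $\E\big[(M_b-M_a)^2\big]=\E[M_b^2]-\E[M_a^2]$ for the closed martingale $t\mapsto Q^{T-t}\ph(X_t)$; summing over all particles and all genealogical pieces and letting $N\to\infty$, the telescoping of these increments — combined once more with the asymptotics of the branching times — produces $p_T^2\,\Var_{\eta_T}(\ph)-p_T^2\ln(p_T)\,\eta_T(\ph)^2-\int_0^T p_t\,\Var_{\eta_t}(Q^{T-t}\ph)\,dp_t$. Adding to this the variance $\Var_{\eta_0}(Q^T\ph)$ of the independent term $(\mathrm{I})$ and the branching contribution above, one finds $v_T(\ph)=\sigma^2_T(\ph)$; since $(\mathrm{II})$ then converges to a centred Gaussian with variance $v_T(\ph)-\Var_{\eta_0}(Q^T\ph)$ and is asymptotically independent of $(\mathrm{I})$, this yields the announced convergence in distribution.

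The main obstacle is precisely the identification $v_T(\ph)=\sigma^2_T(\ph)$, that is, the computation of $\lim_N N\langle M^N\rangle_T$. It rests on (i) using the closed-martingale square identity so that the a priori intractable ``carr\'e du champ'' of the free dynamics cancels out of the final formula — this is exactly what allows one to dispense with the boundedness assumption of \cite{cdgr1}; (ii) a law of large numbers for the empirical branching measure $d\calN$, quantitatively controlled through the estimates of Proposition~\ref{pro:estimate} and the continuity and positivity of $t\mapsto p_t$ from Lemma~\ref{lem:pcont}; and (iii) a careful accounting of the genealogical telescoping, which is where the logarithmic term $-p_T^2\ln(p_T)\,\eta_T(\ph)^2$ and the doubling of the integral term originate. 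Checking the remaining hypotheses of the martingale CLT — tightness, negligibility of the jumps, absence of simultaneous jumps — is comparatively routine given Assumption~\ref{ass:D}.
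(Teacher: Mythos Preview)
Your architecture matches the paper's: reduce to $\ph\in\calD$, recognise $t\mapsto\gamma^N_t(Q^{T-t}\ph)$ as a \cadlag martingale (Lemma~\ref{lem:decomp}), split off the i.i.d.\ initial term, and apply a martingale CLT to the increment. You also correctly identify the decisive idea --- replacing the intractable carr\'e-du-champ of the free dynamics by the square process $\gamma^N_t(Q^2)$ via the closed-martingale identity --- which is precisely the content of Lemma~\ref{lemgQ2}.

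The gap is in the passage from this idea to the actual convergence of the quadratic variation. You propose to compute the \emph{predictable} bracket $N\langle M^N\rangle_T$ directly, handling the branching part by passing to the limit in $\int (p^N_s)^2\Var_{\eta^N_s}(Q)\,d\calN_s$ and the free part by ``telescoping'' the identity $\E[(M_b-M_a)^2]=\E[M_b^2]-\E[M_a^2]$ along genealogical pieces. Neither step is justified as stated. The first integral is against a random measure $d\calN$ with a random integrand that is only known to converge at fixed times; a direct limit requires control you do not have. The second identity is an \emph{expectation} statement and does not by itself give convergence in probability of the bracket; moreover, the number of genealogical pieces is $N\calN_T$, random and of order $N$, so the error martingales you are implicitly discarding must be shown to be $o_{\P}(1)$ after summation.

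The paper circumvents both issues by working not with $\langle M^N\rangle$ but with a tailored increasing process $i^N_t$ (Definition~\ref{def:i}) that differs from $N[\gamma^N(Q),\gamma^N(Q)]_t$ by a local martingale (Lemma~\ref{lem:i_mart}); the martingale CLT used (Theorem~\ref{aeichj}, via Ethier--Kurtz) only requires such a process, not the predictable bracket. The convergence of $i^N_t$ is then obtained by \emph{integration by parts in time} (Lemma~\ref{oaichmp}), which converts the troublesome integrals against $d\A$ and $d\calN$ into boundary terms plus integrals of $\gamma^N(Q^2)$ against $dp^N$. The latter are controlled (Proposition~\ref{almchi}) using the \emph{time-uniform} convergence $\sup_{t\le T}|p^N_t-p_t|\to 0$ (Lemma~\ref{lzich}, a Dini-type argument), an ingredient absent from your outline but essential here. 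Your ``law of large numbers for $d\calN$'' is the right intuition, but making it rigorous without these integrations by parts and without the uniform control on $p^N$ is exactly the difficulty the paper is built to overcome.

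Finally, the asymptotic independence of (I) and (II) also needs an argument; the paper handles it inside Theorem~\ref{aeichj} by a change-of-measure trick on the initial $\sigma$-field.
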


Since $\un_F\in{\cal D}$ by assumption, and $\gamma_T(\un_F)=p_T$, the CLT for $\eta_T^N$ is then a straightforward application of this result by considering the decomposition 
$$\sqrt{N}\left(\eta_T^N\p{\ph}-\eta_T(\ph)\right)=\frac{1}{\gamma_T^N(\un_F)} \sqrt{N}\left(\gamma_T^N(\ph-\eta_T\left(\ph\right))-\gamma_T(\ph-\eta_T\left(\ph\right))\right),$$
and the fact that $\gamma_T^N(\un_F)$ converges in probability to $p_T=\gamma_T(\un_F)$.

\begin{Cor}\label{eta}
Under Assumptions~\ref{ass:D} and~\ref{ass:E}, for any $\ph$ in $\overline{\cal D}$, one has the convergence in distribution
$$\sqrt{N}\left(\eta_T^N(\ph)-\eta_T\p{\ph}\right)\xrightarrow[N\to\infty]{\cal D}{\cal N}(0,\sigma_T^2(\ph-\eta_T(\ph))/p_T^2).$$
Besides,
$$\sqrt{N}\left(p_T^N-p_T\right)\xrightarrow[N\to\infty]{\cal D}{\cal N}(0,\sigma^2),$$
where 
$$\sigma^2 \eqdef \sigma_T^2(\un_F) = -p_T^2\ln(p_T) - 2\int_0^T \Var_{\eta_{t}}(Q^{T-t}(\un_F)) p_t dp_t.$$
\end{Cor}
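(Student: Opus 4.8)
The plan is to obtain Corollary~\ref{eta} from Theorem~\ref{gamma} by Slutsky's lemma, so the substantive work is Theorem~\ref{gamma} itself. For the statement on $\eta^N_T$, set $\psi\eqdef\ph-\eta_T(\ph)\un_F$, which lies in $\overline{\calD}$ because $\un_F\in\calD$ and $\overline{\calD}$ is a closed subspace; then
$$\sqrt N\big(\eta^N_T(\ph)-\eta_T(\ph)\big)=\frac{1}{\gamma^N_T(\un_F)}\,\sqrt N\big(\gamma^N_T(\psi)-\gamma_T(\psi)\big),$$
and since $\gamma_T(\psi)=0$, Theorem~\ref{gamma} applied to $\psi$ together with $\gamma^N_T(\un_F)=p^N_T\to p_T$ in probability (Proposition~\ref{pro:estimate}) yields the limiting variance $\sigma_T^2(\psi)/p_T^2=\sigma_T^2(\ph-\eta_T(\ph))/p_T^2$. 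For the statement on $p^N_T$, take $\ph=\un_F$ in Theorem~\ref{gamma}: then $\gamma^N_T(\un_F)=p^N_T$ and $\gamma_T(\un_F)=p_T$, and since $\eta_T$ is carried by $F$ one has $\Var_{\eta_T}(\un_F)=0$ and $\eta_T(\un_F)=1$, so $\sigma_T^2(\un_F)$ collapses to the stated $\sigma^2$.

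It remains to prove Theorem~\ref{gamma}, and the approach I would take is a central limit theorem for martingales. The starting point is the process $M^N_t\eqdef\gamma^N_t(Q^{T-t}\ph)=p^N_t\,\tfrac1N\sum_{n=1}^N Q^{T-t}\ph(X^n_t)$ for $t\in[0,T]$, together with the convention $M^N_{0^-}\eqdef\gamma_T(\ph)=\eta_0(Q^T\ph)$. Since $M^N_T=\gamma^N_T(\ph)$ and $M^N_0=\eta^N_0(Q^T\ph)$ has mean $\gamma_T(\ph)$, one has the exact identity $\sqrt N\big(\gamma^N_T(\ph)-\gamma_T(\ph)\big)=\sqrt N\big(M^N_T-M^N_{0^-}\big)$. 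The first key step is to show that, under Assumptions~\ref{ass:D} and~\ref{ass:E}, $(M^N_t)_{t\in[0,T]}$ is a bounded martingale for the natural filtration $(\calF^N_t)$ of the particle system. On any branching-free interval this reduces, particle by particle, to the martingale property of $s\mapsto Q^{T-s}\ph(X_s)$ for the underlying killed dynamics (with the convention $Q^{T-s}\ph(\partial)=0$), multiplied by the then-constant factor $p^N$; at a branching time, a short computation shows the jump of $M^N$ is conditionally centred, the factor $(1-\tfrac1N)$ in the definition of $p^N_t$ being exactly what cancels the bias created by the killed particle being reassigned the position of one of the $N-1$ survivors. This is also where the hard-killing hypotheses genuinely bite: the cancellation at a branching requires that the value $Q^{T-t}\ph$ carried by the killed particle tend to $0$ along the approach to its killing time, a boundary behaviour controlled precisely by Assumption~\ref{ass:D} (atomless jump and killing distributions), or its weaker form~\ref{ass:Dp}, and which does not arise in the soft-killing analysis of~\cite{cdgr1}. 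Once the martingale property is established for $\ph\in\calD$, the extension to $\ph\in\overline{\calD}$ is routine, since $\|Q^{T-t}\ph_k-Q^{T-t}\ph\|_\infty\le\|\ph_k-\ph\|_\infty$, everything is uniformly bounded and linear in the test function, and $\ph\mapsto\sigma_T^2(\ph)$ is $\|\cdot\|_\infty$-continuous.

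With the martingale in hand I would apply a central limit theorem for \cadlag martingales to $\sqrt N M^N$, which requires two verifications. First, the jumps must be asymptotically negligible: each branching, and each jump of an underlying particle, moves $M^N$ by $O(1/N)$ (one particle out of $N$, $Q^{T-t}\ph$ bounded), hence by $O(1/\sqrt N)$ after rescaling, while the initial i.i.d.\ sampling, which is not a negligible single jump, must be unfolded into $N$ successive conditionally-centred sub-steps by refining the filtration at time $0$, so that it too contributes only $O(1/\sqrt N)$ increments and the Lindeberg condition holds. Second — the crux — one must show that the predictable quadratic variation satisfies $N\langle M^N\rangle_T\to\sigma_T^2(\ph)$ in probability. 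The bracket splits into the contribution of the initial sampling, which converges to $\Var_{\eta_0}(Q^T\ph)$; the ``mutation'' part accumulated along the free evolution of particles between branchings; and the ``branching'' part, one conditionally-centred jump of size $O(1/N)$ per branching, of which there are of order $N\calN_T\to-N\ln p_T$. The point that makes the ``bounded carr\'e du champ'' assumption of~\cite{cdgr1} unnecessary is that only \emph{expectations} of the brackets of the individual martingales $s\mapsto Q^{T-s}\ph(X_s)$ enter the limit, and these are expressed purely through the semigroup applied to squares, via $\E\big[\langle Q^{T-\cdot}\ph(X_\cdot)\rangle_u-\langle Q^{T-\cdot}\ph(X_\cdot)\rangle_s\,\big|\,X_s=x\big]=Q^{u-s}\big((Q^{T-u}\ph)^2\big)(x)-(Q^{T-s}\ph)^2(x)$, with no infinitesimal generator involved. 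Feeding in the consistency estimates $\eta^N_t\to\eta_t$, $p^N_t\to p_t$, $\calN_t\to-\ln p_t$ (Proposition~\ref{pro:estimate} and the estimates of Section~4), identifying the compensator of the branching counting process, and performing a Riemann-sum plus integration-by-parts bookkeeping — using in particular that $-2\int_0^T\Var_{\eta_t}(Q^{T-t}\ph)\,p_t\,dp_t=\Var_{\eta_0}(Q^T\ph)-p_T^2\Var_{\eta_T}(\ph)+\int_0^T p_t^2\,d\Var_{\eta_t}(Q^{T-t}\ph)$ — turns the three contributions into exactly $\sigma_T^2(\ph)$.

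The main obstacles are: (i) the martingale property at killing times, i.e.\ the boundary behaviour of $Q^{T-t}\ph$ that makes the branching cancellation exact, which is the technical heart of handling hard killing and the reason Assumption~\ref{ass:D}/\ref{ass:Dp} is formulated as it is; and (ii) the computation of the limiting bracket, in particular handling the mutation part and proving the concentration of $N\langle M^N\rangle_T$ around its semigroup-expressible limit using no regularity of the dynamics beyond what Assumption~\ref{ass:D}(ii) provides. The jump-negligibility check and the passage from $\calD$ to $\overline{\calD}$ are, by comparison, routine.
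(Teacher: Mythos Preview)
Your derivation of the Corollary from Theorem~\ref{gamma} is correct and is exactly what the paper does: Slutsky with the decomposition through $\psi=\ph-\eta_T(\ph)\un_F$, and $\ph=\un_F$ for $p_T^N$.

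Your sketch of Theorem~\ref{gamma} itself, however, diverges from the paper and contains a misconception and a real gap.

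\textbf{The misconception.} The martingale property of $\gamma^N_t(Q^{T-t}\ph)$ at a branching time does \emph{not} require that $Q^{T-t}\ph(X^n_t)\to 0$ as particle $n$ approaches its killing, and Assumption~\ref{ass:D} does not furnish such boundary behaviour. The correct mechanism (see \eqref{lmdkcjm}) is that $Q^{T-t}\ph(\widetilde X_t)\un_{t<\tau_\partial}$ is automatically a martingale for the underlying process by the Markov property and the very definition of $Q$; the indicator drops to $0$ at $\tau_\partial$ regardless of any limiting value of $Q^{T-t}\ph$. The $(1-\tfrac1N)$ factor then compensates the branching bias, as you say. What Assumption~\ref{ass:D} actually delivers is the weaker Assumption~\ref{ass:Dp}: non-simultaneity of jumps and branchings, which is what makes the martingales $\M^n,\calM^m$ orthogonal (Lemma~\ref{Lem:quad}) and the individual jumps of order $1/N$ (Corollary~\ref{mzoecjleijf}).

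\textbf{The gap.} You propose to work with the \emph{predictable} quadratic variation $\langle M^N\rangle$ and assert that ``only expectations of the brackets of the individual martingales enter the limit'', citing the identity $\E[\langle Q^{T-\cdot}\ph(X_\cdot)\rangle_u-\langle\cdot\rangle_s\,|\,X_s=x]=Q^{u-s}((Q^{T-u}\ph)^2)(x)-(Q^{T-s}\ph)^2(x)$. But the martingale CLT needs $N\langle M^N\rangle_T\to\sigma_T^2(\ph)$ \emph{in probability}, not in expectation, and the step from one to the other is precisely what the bounded carr\'e-du-champ assumption trivialised in~\cite{cdgr1} (it turned the bracket into a time integral with bounded density, so dominated convergence applied). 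Your sketch does not supply this concentration. The paper's route is different: it works with the \emph{ordinary} quadratic variation $[\gamma^N(Q),\gamma^N(Q)]$, replaces it by an adapted increasing process $i^N_t$ differing from it by a local martingale (Lemma~\ref{lem:i_mart}), and then --- this is the key idea absent from your outline --- establishes the identity $d\gamma^N_t(Q^2)=p^N_{t^-}\,d\A_t+\text{(local martingale)}$ with $\A_t=\tfrac1N\sum_n[\M^n,\M^n]_t$ (Lemma~\ref{lemgQ2}). Integration by parts (Lemma~\ref{oaichmp}) then rewrites $i^N_t$ purely in terms of $\gamma^N(Q^2)$, $\gamma^N(Q)$ and $p^N$, each of which converges by the $\L^2$ estimate of Proposition~\ref{pro:estimate} and the uniform estimate of Lemma~\ref{lzich}. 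Concentration thus comes for free from the consistency of $\gamma^N$; no pointwise control on individual brackets, and no compensator of the branching counting process, is ever invoked.
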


\begin{Rem}[Non independent initial conditions]\label{aljcneij}
As will be clear from Step~(i) in the proof of Proposition~\ref{pro:estimate} and from the proof of part (a) of Proposition \ref{lazicj}, Theorem \ref{gamma} and Corollary \ref{eta} both still hold true when the i.i.d.~assumption on the initial condition (\ref{lkachis}) is relaxed and replaced by the following set of conditions: (i) the initial particle system $(X_0^1,\ldots,X_0^N)$ is exchangeable, (ii) its empirical distribution $\eta_0^N=\gamma_0^N$ satisfies 
$$  \E \b{ \left(\eta_0^N(Q^T(\ph))-\eta_0(Q^T(\ph))\right)^2 } \leq c \frac{\norm{\ph}^2_\infty}{N},$$
for some constant $c > 0$, and (iii) the following CLT is satisfied: for any $\ph\in{\cal D}$,
$$\sqrt{N}\left(\eta_0^N(Q^T(\ph))-\eta_0(Q^T(\ph))\right)\xrightarrow[N\to\infty]{\cal D}{\cal N}(0,\V_{\eta_0}(Q^T(\ph))).$$
\end{Rem}

Before proceeding with the proof of Theorem \ref{gamma}, let us give an example of application.

\subsection{Example: Feller process with hard obstacle}

We show in this section how our CLT will apply to Fleming-Viot particle systems based on a Feller process killed when hitting a hard obstacle. As far as we know, this is the first CLT result in that case of ``hard killing''. Yet, there is a cluster of papers studying the hard killing case where $X_t$ is a diffusion process in a bounded domain of $\R^d$ killed when it hits the domain boundary. Among other questions, the convergence of the empirical measures as $N$ goes to infinity is addressed in \cite{BBF12,GK04,lobus} (see also references therein). This case is also included in the general convergence results of \cite{v14}.\medskip 

Let $t \mapsto \widetilde{X}_t$ be a Feller process in a locally compact Polish space $E$, and let $F$ be a bounded open domain with boundary $\partial F = \overline{F} \setminus F$. Let $\tau_\partial$ be the hitting time of 
$E \setminus \overline{F}$, and set $X_t = \tilde{X}_t$ for $t < \tau_\partial$. We consider the set of continuous and bounded functions $\calD = C_b(F)$ extended as usual to $F \cup \set{\partial}$ by setting $\ph(\partial)=0$ if $\ph \in \calD$. Note that $\un_F \in \calD$. \medskip

The difficulty in checking Assumption~\ref{ass:D} is the continuity with respect to $t$ of the mapping $(x,t) \mapsto Q^{t}(\ph)(x)$ because
of the indicator function in
\begin{align*}
Q^t\ph(x)=\E[ \ph(X_t)\un_{t<\tau_\partial}|X_0=x].
\end{align*}
However, we have the following general result:

\begin{Pro}\label{pro:Feller_D}
Assume that $F$ is open, that the process $\widetilde{X}$ is Feller, and the following two conditions: 
\begin{enumerate}
\item[(i)] For all $x \in F$ and all $t \geq 0$, $\P(\widetilde{X}_t \in \partial F|\widetilde X_0=x) = 0$. 
\item[(ii)] For all $x \in \partial F$, $\P( \tau_{\partial} > 0|\widetilde X_0=x) = 0$.
\end{enumerate}
Then Assumption~\ref{ass:D} is fulfilled with $\calD = C_b(F)$.
\end{Pro}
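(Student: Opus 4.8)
The plan is to verify both parts of Assumption~\ref{ass:D} for the killed process $X$, taking $\calD = C_b(F)$. Fix a compatible metric $d$ on $E$ and set $U := E\setminus\overline F$, which is open and satisfies $\tau_\partial = \inf\{s\ge 0 : \widetilde X_s\in U\}$; recall also that $\overline F$ is compact. Two facts about $\widetilde X$ will be used repeatedly: being Feller, it is stochastically continuous and therefore has no fixed-time discontinuities, i.e.\ $\P_x(\widetilde X_{t^-}\neq\widetilde X_t) = 0$ for every $x\in E$ and every fixed $t\ge 0$; and its transition semigroup $(\widetilde Q^t)$ is strongly continuous and maps $C_0(E)$ into itself, so that $(x,t)\mapsto\widetilde Q^t g(x)$ is jointly continuous for $g\in C_0(E)$, and jointly upper semicontinuous whenever $g$ is bounded, upper semicontinuous and compactly supported (infimum over $g_m\downarrow g$ with $g_m\in C_c(E)$).

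For part~(i), I would note that a jump of $X$ at a deterministic time $t$ can occur only if $\widetilde X$ jumps at $t$ while $t<\tau_\partial$, or if $\tau_\partial = t$. The first event has probability zero by the absence of fixed-time discontinuities of $\widetilde X$. On $\{\tau_\partial = t\}$ one has $\widetilde X_s\in\overline F$ for all $s<t$, so either $\widetilde X$ jumps into the open set $U$ exactly at $t$ (again probability zero), or $\widetilde X_t\in\overline F$ is a limit from the right of points of $U$, which forces $\widetilde X_t\in\overline U\cap\overline F\subseteq(E\setminus F)\cap\overline F = \partial F$ (using $\overline U\subseteq E\setminus F$); but $\P_x(\widetilde X_t\in\partial F) = 0$ by the Proposition's hypothesis~(i). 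Hence $\P_x(\tau_\partial = t) = 0$, so $\P_x(X_{t^-}\neq X_t) = 0$; in particular (cf.\ Remark~\ref{alzichachi}) the killing time is atomless.

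For part~(ii), I must show that $(x,t)\mapsto Q^t\ph(x) = \E_x[\ph(\widetilde X_t)\un_{\tau_\partial>t}]$ is continuous on $F\times\R_+$ for $\ph\in C_b(F)$ (extended by $0$ at $\partial$); note that hypothesis~(i) already gives $\widetilde X_t\in F$ $\P_x$-a.s.\ on $\{\tau_\partial>t\}$, since $\{\tau_\partial>t\}\subseteq\{\widetilde X_t\in\overline F\}$. First I would treat $\ph\in C_0(F)$: extending $\ph$ by $0$ to $\overline\ph\in C_0(E)$ and applying the strong Markov property at the stopping time $\tau_\partial$,
$$
Q^t\ph(x) = \widetilde Q^t\overline\ph(x) - \E_x\!\left[\un_{\tau_\partial\le t}\,\widetilde Q^{\,t-\tau_\partial}\overline\ph(\widetilde X_{\tau_\partial})\right].
$$
The first term is jointly continuous by the Feller property; the second equals $\E_x[G(t,\tau_\partial,\widetilde X_{\tau_\partial})]$ with $G$ bounded and continuous on $\R_+\times\R_+\times(E\setminus F)$ --- the only possible discontinuity, across $\{s = t\}$, is absent because $\overline\ph\equiv 0$ on $E\setminus F$, which is exactly where $\widetilde X_{\tau_\partial}$ takes its values --- so the joint continuity of this term reduces to the weak continuity in $x\in F$ of the exit law $\mathrm{Law}_x(\tau_\partial,\widetilde X_{\tau_\partial})$. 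For general $\ph\in C_b(F)$ --- in particular $\ph = \un_F$, which yields $p_t$ --- I would approximate $\ph$ by $\ph_n = \chi_n\ph\in C_0(F)$, where $\chi_n\in C_0(F)$ is $[0,1]$-valued and equal to $1$ on $F_n := \{x\in F : d(x,E\setminus F) > 1/n\}$, so that $|Q^t\ph(x) - Q^t\ph_n(x)| \le 2\norm{\ph}_\infty\,\widetilde Q^t\un_{\overline F\setminus F_n}(x)$. Since $\overline F\setminus F_n$ is compact, this bound is jointly upper semicontinuous in $(x,t)$ and, because $\bigcap_n(\overline F\setminus F_n) = \partial F$ has probability zero under every $\P_x$, it decreases to $0$ pointwise. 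Combining the joint continuity of each $Q^t\ph_n$ (the $C_0(F)$ case) with this estimate --- the joint upper semicontinuity making the control effective along any convergent sequence $(x_k,t_k)\to(x_0,t_0)$ --- gives the desired joint continuity of $Q^t\ph$.

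The one genuinely nontrivial step is the weak continuity in $x\in F$ of $\mathrm{Law}_x(\tau_\partial,\widetilde X_{\tau_\partial})$: this is exactly the statement that hypothesis~(ii) --- every boundary point being regular for $E\setminus\overline F$ (equivalently, by the inclusion $E\setminus\overline F\subseteq E\setminus F$, for $E\setminus F$, so that $\tau_\partial$ coincides a.s.\ with the exit time of the open set $F$) --- makes the semigroup of $\widetilde X$ killed on exiting $F$ a Feller semigroup. This is the classical mechanism behind the solvability of the Dirichlet problem, and it is precisely what hypotheses~(i)--(ii) are designed to furnish; everything else --- the absence of fixed-time discontinuities of a Feller process, the strong Markov decomposition, the upper-semicontinuity bookkeeping, and the boundary-layer approximation passing from $C_0(F)$ to $C_b(F)$ --- is routine.
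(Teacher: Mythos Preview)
Your argument is correct and your identification of the crux --- continuity in $x$ of the exit law $(\tau_\partial,\widetilde X_{\tau_\partial})$ under the boundary-regularity hypothesis~(ii) --- matches the paper's, but the route is genuinely different. The paper works pathwise: it uses the continuity of $z\mapsto\mathrm{Law}((Z^{z}_t)_{t\geq 0})$ in the Skorokhod topology (Lemma~\ref{lem:cont_feller}), combines upper and lower semicontinuity of hitting times of open and closed sets respectively (Lemma~\ref{lem:stop_time_cont}), and shows that hypothesis~(ii) forces $\overline\tau_\partial=\tau_\partial$ a.s.\ so that the two semicontinuous bounds squeeze (Lemma~\ref{lem:time_cont_final}); from there $\E_{x^n}[\ph(\widetilde X_{t_n})\un_{\tau_\partial^n>t_n}]\to\E_x[\ph(\widetilde X_t)\un_{\tau_\partial>t}]$ follows directly for any bounded continuous $\ph$, without the $C_0(F)\to C_b(F)$ approximation layer you introduce. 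Your approach --- strong-Markov decomposition into a free Feller part plus a boundary correction, then approximation via compact boundary layers --- is cleaner in structure and closer to classical potential theory, but it packages the entire content of the paper's three lemmas into the single sentence ``this is precisely what hypotheses (i)--(ii) are designed to furnish''; in effect you are citing exactly what the paper proves. A minor further point: your approximation step uses compactness of $\overline F$ (needed so that $\overline F\setminus F_n$ is compact and $\widetilde Q^t\un_{\overline F\setminus F_n}$ is upper semicontinuous), whereas the paper's pathwise argument does not, so the paper's proof is slightly more general in this respect.
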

The proof is given in Appendix~\ref{proufefellerd}.
Using the latter, we can prove Assumption~\ref{ass:D} for regular elliptic diffusions.
\begin{Pro}
 Assume that $F$ is open and bounded in $\R^d$ with smooth boundary $\partial F$, and that the diffusion $\widetilde{X}$ 
 has smooth and uniformly elliptic coefficients. Then Assumption~\ref{ass:D} holds true.
\end{Pro}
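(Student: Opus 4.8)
The plan is to reduce everything to Proposition~\ref{pro:Feller_D}: since $F$ is open by hypothesis, it suffices to check that $\widetilde X$ is Feller and that conditions (i) and (ii) of that proposition hold with $\calD = C_b(F)$. First I would observe that, $F$ being bounded, one may extend the diffusion coefficients $a$ (diffusion matrix) and $b$ (drift) from a neighbourhood of $\overline F$ to all of $\R^d$ so that they remain smooth, bounded with bounded derivatives, and uniformly elliptic; this modification does not affect the killed process $X$, since $\tau_\partial$ and the trajectory up to $\tau_\partial$ depend only on the dynamics inside $\overline F$. For the extended generator $L=\tfrac12\sum a_{ij}\partial_{ij}+\sum b_i\partial_i$ on $E=\R^d$, the martingale problem is well posed and the associated semigroup is Feller (in fact strong Feller): this is classical and follows, for instance, from the existence of a jointly continuous transition density $p_t(x,y)$, smooth and satisfying Gaussian (Aronson) bounds. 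Hence $\widetilde X$ is a Feller process in $E$.

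Next, condition (i): for $x\in F$ and $t>0$ (the case $t=0$ is trivial since $x\notin\partial F$), one has $\P(\widetilde X_t\in\partial F\mid\widetilde X_0=x)=\int_{\partial F}p_t(x,y)\,dy$. Since $\partial F$ is a smooth compact hypersurface in $\R^d$, it is Lebesgue-negligible, so this integral vanishes and (i) holds.

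The remaining, genuinely delicate point is condition (ii): for every $x_0\in\partial F$ one has $\P(\tau_\partial>0\mid\widetilde X_0=x_0)=0$, i.e.\ the diffusion started on the boundary instantaneously visits $E\setminus\overline F$. The event $\set{\tau_\partial=0}$ lies in the germ $\sigma$-field $\bigcap_{t>0}\sigma(\widetilde X_s,\,s\le t)$, so by Blumenthal's $0$--$1$ law its probability is $0$ or $1$, and it remains to exclude $0$. Here I would exploit the smoothness of $\partial F$: localising near $x_0$ and writing $\rho$ for the signed distance to $\partial F$ (smooth near the boundary, with $\abs{\nabla\rho}=1$ there and $\rho<0$ on $E\setminus\overline F$), Itô's formula gives $\rho(\widetilde X_{t})=\int_0^{t}\nabla\rho(\widetilde X_s)^{\top}\sigma(\widetilde X_s)\,dW_s+\int_0^{t}L\rho(\widetilde X_s)\,ds$ up to the exit time of a small ball, with the martingale part having quadratic variation bounded below by $\lambda t$ thanks to uniform ellipticity ($\nabla\rho^{\top}a\,\nabla\rho\ge\lambda$). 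By Dambis--Dubins--Schwarz this is a time-changed Brownian motion plus a bounded-variation term, started at $0$, hence it takes strictly negative values at arbitrarily small times almost surely; at those times $\widetilde X$ lies in $E\setminus\overline F$, so $\tau_\partial=0$ a.s. Equivalently, one may simply invoke that smooth domains satisfy an exterior ball (Zaremba cone) condition, which makes every boundary point regular for the uniformly elliptic operator $L$ by classical potential theory for the Dirichlet problem.

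Once (i) and (ii) are established, Proposition~\ref{pro:Feller_D} applies and yields Assumption~\ref{ass:D}, which completes the proof. The only real obstacle is condition (ii), i.e.\ the regularity of boundary points, which is precisely where the smoothness of $\partial F$ and the uniform ellipticity are used in an essential way; condition (i) and the Feller property are soft consequences of the existence of a smooth heat kernel.
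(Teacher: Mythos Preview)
Your proposal is correct and follows essentially the same route as the paper: reduce to Proposition~\ref{pro:Feller_D}, verify the Feller property by classical results, and check conditions~(i) and~(ii). The only minor differences are cosmetic: for~(i) you argue directly via absolute continuity of the transition kernel (so $\partial F$ is a null set), whereas the paper invokes the existence of a density for the first-passage time; for~(ii) you time-change via Dambis--Dubins--Schwarz, whereas the paper phrases the same idea as It\^o's formula applied to a level function followed by the law of the iterated logarithm for Brownian motion.
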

\begin{proof}
 This is a direct application of Proposition~\ref{pro:Feller_D}. First, the fact that $\widetilde{X}$ is a Feller process can be found for example in \cite{ek86}, Chapter 8, Theorem 1.6.  Next, point (i) is obviously true because the first passage time through $\partial F$ 
of an elliptic diffusion has a density with respect to Lebesgue's measure. Finally, point 
(ii) is also satisfied since the entrance time in the interior of a smooth domain from its boundary by an elliptic diffusion is $0$. This classical fact can for example be proved by applying Itô's formula to a smooth level function defining the domain, and then the law of the iterated logarithm for the Brownian motion.
\end{proof}

Assumption~\ref{ass:E} does not follow from a classical result. It is proved for instance in~\cite{GK12} for regular diffusions and smooth boundary. Note that, in the latter, the authors gives a general set of sufficient assumptions for non explosion, some of them being further generalized in~\cite{v14}. The upcoming result is exactly Theorem~$1$ of Section~$2.1$ in~\cite{GK12}, in the simple case of smooth domains.

\begin{Pro}\label{prop:exp}
Assume that $F$ is open and bounded in $\R^d$ with smooth boundary $\partial F$, and that the diffusion $\widetilde{X}$ has smooth and uniformly elliptic coefficients. Then Assumption~\ref{ass:E} is satisfied.
\end{Pro}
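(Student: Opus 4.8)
The plan is to obtain this directly from the non-explosion result of Grigorescu and Kang, namely Theorem~1 in Section~2.1 of \cite{GK12}: it states exactly that, for a diffusion with smooth uniformly elliptic coefficients killed when it exits a bounded domain with smooth boundary, the associated Fleming--Viot particle system produces almost surely only finitely many rebirths on any finite time interval. Under the present hypotheses every assumption of that theorem holds, so its conclusion yields $\P(\calN_T<+\infty)=1$, which is precisely Assumption~\ref{ass:E}; the only verification needed is the trivial dictionary that $N\calN_T$ is, by definition, the number of branching times in $[0,T]$.

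For completeness, let me indicate the mechanism, since this is the genuinely non-trivial point. The obstruction to Assumption~\ref{ass:E} is a \emph{cascade} of near-instantaneous rebirths: writing $d(x)\eqdef\mathrm{dist}(x,\partial F)$, if many particles accumulate in a thin boundary layer $\set{d<\eps}$, each is killed in a short time and, upon rebirth onto a uniformly chosen other particle, may fall back into the layer, so that the branching times could conceivably accumulate at some $\tau_\infty\le T$. A first, elementary observation excludes part of this: a killed particle (which realises $d=0$) is relocated onto the current position of another particle, so the \emph{largest} of the $N$ distances to $\partial F$ is unchanged by every branching, hence $t\mapsto\max_{n} d(X^n_t)$ is continuous on $[0,\tau_\infty)$. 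If this quantity stays bounded away from $0$ on a left neighbourhood of $\tau_\infty$, one particle remains uniformly interior there; since each rebirth picks its target uniformly among the other $N-1$ particles, infinitely many rebirths would be placed on such an interior particle and would then have to carry it across a fixed distance to $\partial F$ within vanishingly small time windows, which is ruled out by a Gaussian short-time upper bound on boundary-hitting probabilities for a uniformly elliptic diffusion near a smooth boundary.

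It remains to exclude the complementary, degenerate scenario in which \emph{all} $N$ particles converge to $\partial F$ as $t\uparrow\tau_\infty$. This is the step I expect to be the main obstacle, and it is where the smoothness of $\partial F$ and the uniform ellipticity must be used quantitatively: one needs sharp enough control on exit times from the boundary layer and on the law of the rebirth positions to show that this configuration is not reached in finite time, and this joint $N$-particle estimate — including the feedback created by the rebirths — is the technical heart of \cite[Theorem~1]{GK12}. I would not reprove it here; instead I would simply cite that theorem, as is done below.
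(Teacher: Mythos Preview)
Your proposal is correct and matches the paper's approach exactly: the paper does not give a proof either, but simply states that the proposition \emph{is} Theorem~1 of Section~2.1 in \cite{GK12} specialized to smooth domains, and cites it without further argument. Your additional heuristic discussion of the cascade mechanism and the role of $\max_n d(X^n_t)$ is accurate and informative, but goes beyond what the paper itself provides.
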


Putting all things together, we conclude that if $F$ is open and bounded in $\R^d$ with smooth boundary $\partial F$, and if the diffusion $\widetilde{X}$ has smooth and uniformly elliptic coefficients, then one can apply the CLT type results of the present paper.

\section{Proof}\label{mazlco}
\subsection{Overview}
The key object of the proof is the c\`adl\`ag martingale 
\begin{equation*}
t \mapsto \gamma^N_t \p{Q} \eqdef \gamma^N_t \p{Q^{T-t}(\ph)},
\end{equation*}
the fixed parameters $T$ and $\ph$ being implicit in order to lighten the notation. Note that, since $\gamma^N_0=\eta^N_0$ and $\gamma_0=\eta_0$,
\begin{align*}
\gamma_T^N(\ph)-\gamma_T(\ph)=\Big(\gamma_T^N(Q)-\gamma^N_0(Q)\Big)
+\Big(\eta^N_0(Q^T(\ph))-\eta_0(Q^T(\ph))\Big)
\end{align*}
is the final value of the latter martingale, with the addition of a second term depending on the initial condition. Note that this second term satisfies a CLT by assumption. We will handle the distribution of $\gamma_T^N(Q)$ in the limit $N\to\infty$ by using 
a Central Limit Theorem for continuous time martingales, namely Theorem~\ref{aeichj}. 
However, this requires several intermediate steps, mainly for the calculation of the quadratic variation $N[\gamma^N\p{Q},\gamma^N\p{Q}]_t$. \medskip

Unfortunately, showing the convergence of this quadratic variation is not easy. Specifically, it is much more difficult than in \cite{cdgr1} where, thanks to the so-called ``carré-du-champ'' and ``soft killing'' assumptions, we could write the predictable quadratic variation as an integral against Lebesgue's measure in time, with bounded integrand. We could then easily show the pointwise convergence of the integrand and apply dominated convergence. 
Here we cannot do that. Instead, the key idea is to replace the quadratic variation by an adapted increasing process $i_t^N$ such that $N[\gamma^N\p{Q},\gamma^N\p{Q}]_t-i_t^N$ is a local martingale. Finally, the convergence of $i_t^N$ requires some appropriate timewise integrations by parts formulas, as well as the uniform convergence in time of $p_t^N$ to $p_t$. 
\medskip

In the sequel, we will make extensive use of stochastic calculus for \cadlag semimartingales, as presented in  \cite{protter} chapter II  or \cite{js03}.

\subsection{Well-posedness and non-simultaneity of jumps}\label{sec:wellposed}
In the remainder, we adopt the standard notation $\Delta X_t=X_t-X_{t^-}$ and, to lighten  the notation, we will denote for $l= 1,2$,
\begin{align}\label{eq:M_t}
 \gamma^N_t \p{Q^l} \eqdef \gamma^N_t \p{ \b{Q^{T-t}(\ph)}^l}.
\end{align}
First, let us fix $T$ and $\ph$, and denote for each $1 \leq n \leq N$ and any $t\in[0,T]$,
\begin{align*}
&\L^n_t \eqdef Q^{T-t}(\ph)(X_{t}^n),%
&\L_t \eqdef \frac{1}{N}\sum_{n=1}^N\L^n_t,
\end{align*}
where, again, the parameters $T$ and $\ph$ are omitted in order to lighten the notation. We start with the following technical assumption, which is the minimal requirement on the non simultaneity of the branchings and jumps times. In particular, Condition~$(i)$ states that a single particle branches at each branching time, making the Fleming-Viot branching rule well-defined.

\begin{AssDp}\label{ass:Dp}  There exists a space $\calD$ of bounded measurable real-valued functions on $F$,
which contains at least the indicator function $\un_F$, and such that for any $\ph\in\calD$, $t \mapsto \L^n_t$ is \cadlag for each $1 \leq n \leq N$, and:
\begin{itemize}
\item[(i)] Only one particle is killed at each branching time:  if $m\neq n$, then $\tau_{m,j}\neq\tau_{n,k}$ almost surely for any $j,k \geq 1$. %
\item[(ii)] The processes $\L_t^m$ and $\L_t^n$ never jump at the same time: if $m\neq n$, then
$$\P( \exists t \geq 0, \, \Delta \L^m_t \Delta \L^n_t \neq 0)= 0.$$
\item[(iii)] The process $\L_t^n$ never jumps at a branching time of another particle: if $m\neq n$, then
$$ \P( \exists  j \geq 0, \, \Delta \L^n_{\tau_{m,j}} \neq 0)= 0.$$
\end{itemize}
\end{AssDp}

As will be shown in Section \ref{lackalicj} in Appendix, it turns out that \ref{ass:D} implies \ref{ass:Dp}. This is stated in the following lemma.

\begin{Lem}\label{mzoecj}
Under Assumption~\ref{ass:D}, 
the system of particles satisfies Assumption~\ref{ass:Dp} with the same set $\calD$ of test functions.
\end{Lem}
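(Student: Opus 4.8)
The plan is to reduce each part of Assumption~\ref{ass:Dp} to elementary non-coincidence statements for the jump and killing times of the individual particles, to establish those statements for a family of \emph{independent} copies of $X$, and then to transfer them to the Fleming--Viot system through the recursive ``between branchings the particles evolve independently'' description of Definition~\ref{def:ips}. First I would observe that, since by Assumption~\ref{ass:D}(ii) the map $(x,s)\mapsto Q^{s}(\ph)(x)$ is continuous on $F\times\R_+$ and $t\mapsto X^n_t$ is \cadlag with values in $F$, the composition $t\mapsto\L^n_t=Q^{T-t}(\ph)(X^n_t)$ is \cadlag for every $\ph\in\calD$: on each inter-branching interval it is the composition of a continuous function with a \cadlag path, and it has a left limit at the right endpoint because $s\mapsto Q^{T-s}(\ph)(X^n_s)$ is a martingale there. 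Moreover, if $X^n_{t^-}=X^n_t$ then this common value lies in $F$ and continuity yields $\L^n_{t^-}=\L^n_t$, so $\set{t:\Delta\L^n_t\neq 0}\subseteq\set{t:X^n_{t^-}\neq X^n_t}$; hence Conditions (ii) and (iii) of Assumption~\ref{ass:Dp} will follow once one knows that, for $m\neq n$, the paths of $X^m$ and $X^n$ almost surely share no jump time and $X^n$ almost surely does not jump at a branching time $\tau_{m,k}$ of particle $m$. Finally, recall from Remark~\ref{alzichachi} that Assumption~\ref{ass:D} makes $\tau_\partial$ atomless on $[0,\infty)$ from every deterministic $x\in F$, so in particular $\P(\tau_\partial=0\mid X_0=x)=0$.

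Next I would prove the following statement about independent copies: if $\widetilde X^1,\dots,\widetilde X^N$ are independent, $\widetilde X^n$ having the law of $X$ started from a deterministic $x_n\in F$, and $\sigma_n\eqdef\inf\set{s\geq 0:\widetilde X^n_s=\partial}$, then almost surely (a) the finite ones among $\sigma_1,\dots,\sigma_N$ are pairwise distinct; (b) for $m\neq n$ the paths of $\widetilde X^m$ and $\widetilde X^n$ share no jump time in $[0,\infty)$; (c) for $m\neq n$ with $\sigma_m<\infty$, $\widetilde X^n$ does not jump at time $\sigma_m$. For (a) and (c), conditioning on $\widetilde X^m$ freezes $\sigma_m$ to a constant and independence reduces the conditional probability to $\P(\tau_\partial=s)$ at a fixed $s$ (for (a)) or to $\P(X_{s^-}\neq X_s)$ at a fixed $s$ (for (c)), both zero by Remark~\ref{alzichachi} and Assumption~\ref{ass:D}(i); one then integrates. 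For (b), conditioning on $\widetilde X^n$ makes its jump set a fixed countable set $\set{s_1,s_2,\dots}$, and $\P(X^m_{s_k^-}\neq X^m_{s_k}\mid\widetilde X^n)=0$ for each $k$ by Assumption~\ref{ass:D}(i) and independence, so a union bound over $k$ closes it. The same conclusions hold, by conditioning first and integrating afterwards, when the $x_n$ are random but measurable with respect to a $\sigma$-field independent of the driving noise.

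Then I would transfer this to the particle system. Denote by $0=\tau_0<\tau_1<\tau_2<\cdots$ the branching times of the whole system; Definition~\ref{def:ips} says that, conditionally on $\calF_{\tau_j}$, on $[\tau_j,\tau_{j+1})$ the $N$ particles behave as independent copies of $X$ started from $X^1_{\tau_j},\dots,X^N_{\tau_j}\in F$, with $\tau_{j+1}$ equal to $\tau_j$ plus the least of the associated killing times. I would then argue by induction on $j$: assuming the system is well defined up to $\tau_j$ with no bad coincidence on $[0,\tau_j]$, the conditional form of the above statement given $\calF_{\tau_j}$ shows, through (a), that exactly one particle reaches $\partial$ at $\tau_{j+1}$ (so the branching rule is unambiguous) and $\tau_{j+1}>\tau_j$ (these positions are in $F$ and $\P(\tau_\partial=0\mid X_0=x)=0$), and, through (b) and (c), that on $(\tau_j,\tau_{j+1}]$ no two of the $X^n$ share a jump time and no $X^n$ other than the reborn one jumps at $\tau_{j+1}$, whence the reborn particle and its source occupy the same position there. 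Intersecting these almost-sure events over $j\in\set{1,\dots,N\calN_T}$, a range almost surely finite by Assumption~\ref{ass:E}, gives Conditions (i), (ii) and (iii) of Assumption~\ref{ass:Dp} on $[0,T]$, with the same $\calD$ (which contains $\un_F$).

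The main obstacle I anticipate is this last step: turning the informal ``independent copies between branchings'' picture into an honest conditional statement given $\calF_{\tau_j}$, coping with the measurability of the $\tau_j$, and guaranteeing that a newly reborn particle sits strictly inside $F$ so that its next killing time is positive --- which is precisely what makes the induction proceed and the branching times strictly increase. The estimates in the intermediate statement are, by comparison, routine conditioning-and-union-bound computations, and the \cadlag property of $\L^n$ is just the composition of continuous maps with \cadlag paths together with the martingale property on inter-branching intervals.
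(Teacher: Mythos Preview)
Your proposal is correct and follows essentially the same route as the paper: both arguments reduce the three non-coincidence conditions to the atomlessness of jump and killing times together with the conditional independence of the particles between successive branchings, then take a countable union. The paper's proof is a bit more condensed---it conditions directly on $\calF_{\tau_{n,k}}$ for each pair $(n,k),(m,j)$ rather than performing an explicit induction on the global branching times $\tau_j$, and it works with the jumps of $\L^m$ themselves (exhausted by a sequence of stopping times) rather than first passing to the jumps of $X^m$ via the continuity of $Q$---but the substance is identical.
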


Then, under Assumption~\ref{ass:D} or~\ref{ass:Dp}, it is easy to upper-bound the jumps of $\gamma_t^N(Q)$ and $\gamma_t^N(Q^2)$. Indeed, one has
$$|\gamma_t^N(Q)|= |p_t^N\L_t|\leq|\L_t|=\left|\frac{1}{N}\sum_{n=1}^N\L^n_t\right|,$$
and since the jumps of $L_t^n$ and  $L_t^m$ don't coincide, we deduce that
$$|\Delta\gamma_t^N(Q)|\leq\frac{1}{N}\max_{1\leq n\leq N}|\Delta\L^n_t|=\frac{1}{N}\max_{1\leq n\leq N}\left|\Delta Q^{T-t}(\ph)(X_{t}^n)\right|\leq\frac{2\|\ph\|_\infty}{N}.$$ 
The same reasoning applies to $|\Delta\gamma_t^N(Q^2)|$, hence the following result.

\begin{Cor}\label{mzoecjleijf}
Under Assumption~\ref{ass:Dp}, one has $|\Delta\gamma_t^N(Q)|\leq\frac{2\|\ph\|_\infty}{N}$ as well as $|\Delta\gamma_t^N(Q^2)|\leq\frac{\|\ph\|_\infty^2}{N}$.
\end{Cor}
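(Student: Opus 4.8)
The plan is to write, for $l=1,2$,
\[
\gamma_t^N(Q^l) = p_t^N M_t^{(l)}, \qquad M_t^{(l)} \eqdef \eta_t^N\p{\b{Q^{T-t}(\ph)}^l} = \frac1N\sum_{n=1}^N \b{\L_t^n}^l,
\]
and to control the jumps of this product through those of its two factors. Two elementary facts will be used throughout. First, $p_t^N\in(0,1]$, by Lemma~\ref{lem:pcont} together with $p_t^N=(1-\tfrac1N)^{N\calN_t}\le1$. Second, since $(Q^t)_{t\ge0}$ is sub-Markovian, $\abs{\L_t^n}=\abs{Q^{T-t}(\ph)(X_t^n)}\le\|\ph\|_\infty$, so $\L_t^n$ ranges in $[-\|\ph\|_\infty,\|\ph\|_\infty]$ and $\b{\L_t^n}^2$ in $[0,\|\ph\|_\infty^2]$; consequently any jump of $\L_t^n$ is at most $2\|\ph\|_\infty$ in modulus, and any jump of $\b{\L_t^n}^2$ at most $\|\ph\|_\infty^2$.

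I would then classify the jump times of $\gamma_t^N(Q^l)$ using Assumption~\ref{ass:Dp}. By part~(i), at a branching time exactly one particle, say particle $m$, is killed and rebranched; by part~(iii), $\L_t^n$ does not jump at that time for $n\neq m$; and by part~(ii), at any time that is not a branching time at most one among $\L_t^1,\dots,\L_t^N$ jumps. Hence in all cases $\Delta M_t^{(l)} = \tfrac1N\,\Delta\b{\L_t^m}^l$ for a single index $m$.

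If $t$ is not a branching time, then $p_t^N=p_{t^-}^N$, so $\Delta\gamma_t^N(Q^l)=p_t^N\,\Delta M_t^{(l)}=\tfrac{p_t^N}{N}\,\Delta\b{\L_t^m}^l$, and the two ingredients above give $\abs{\Delta\gamma_t^N(Q)}\le\tfrac{2\|\ph\|_\infty}{N}$ and $\abs{\Delta\gamma_t^N(Q^2)}\le\tfrac{\|\ph\|_\infty^2}{N}$. If $t=\tau_{m,k}$ is a branching time, the killed particle occupies the cemetery state $\partial$ just before rebirth, so $\L_{t^-}^m=Q^{T-t}(\ph)(\partial)=0$, and it is reassigned the state of a surviving particle $k$, so $\L_t^m=\L_t^k$; with part~(iii) this yields $M_t^{(l)}-M_{t^-}^{(l)}=\tfrac1N\b{\L_t^k}^l$. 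Since moreover $p_t^N=(1-\tfrac1N)p_{t^-}^N$,
\[
\Delta\gamma_t^N(Q^l)=p_{t^-}^N\p{\b{M_t^{(l)}-M_{t^-}^{(l)}}-\tfrac1N M_t^{(l)}}=\frac{p_{t^-}^N}{N}\p{\b{\L_t^k}^l-M_t^{(l)}},
\]
and since $\b{\L_t^k}^l$ and $M_t^{(l)}$ both lie in $[-\|\ph\|_\infty,\|\ph\|_\infty]$ for $l=1$ and in $[0,\|\ph\|_\infty^2]$ for $l=2$, while $p_{t^-}^N\le1$, this is bounded by $\tfrac{2\|\ph\|_\infty}{N}$ and $\tfrac{\|\ph\|_\infty^2}{N}$ respectively, which is the claim.

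The one subtle step is the branching case: obtaining the sharp constants, rather than a bound of order $3\|\ph\|_\infty/N$, hinges on combining the fact that the killed particle sits at $\partial$ — where the test function vanishes — an instant before rebirth with the exact multiplicative update $p_t^N=(1-\tfrac1N)p_{t^-}^N$; these two effects produce the cancellation that keeps the jump of order $\|\ph\|_\infty/N$.
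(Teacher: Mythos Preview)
Your treatment of non-branching times is correct, and the overall strategy of splitting jumps according to whether $t$ is a branching time is more explicit than the paper's terse argument. However, the branching case contains a genuine error.

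You claim that at a branching time $t=\tau_{m,k}$, ``the killed particle occupies the cemetery state $\partial$ just before rebirth, so $\L_{t^-}^m=Q^{T-t}(\ph)(\partial)=0$''. This is false. By Definition~\ref{def:ips} the Fleming-Viot particle system has state space $F^N$; the particles never actually visit $\partial$. For the underlying \cadlag process one has $X_{\tau_\partial^-}\in\overline F$ and $X_{\tau_\partial}=\partial$, and in the Fleming-Viot construction particle $m$ follows this underlying trajectory for $t<\tau_{m,k}$ and is instantaneously reassigned another particle's state at $t=\tau_{m,k}$. Hence $X_{\tau_{m,k}^-}^m$ is the left limit of the underlying trajectory, a point of $\overline F$, not $\partial$, and $\L_{t^-}^m=Q^{T-t}(\ph)(X_{t^-}^m)$ is in general nonzero. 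The paper's own decomposition~\eqref{lmdkcjm} makes this explicit: the jump of $\M^{m,k}$ at $\tau_{m,k}$ equals $-\L_{\tau_{m,k}^-}^m$, which would vanish identically under your hypothesis and render that martingale trivial.

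If you drop the unjustified claim, your branching-time computation becomes
\[
\Delta\gamma_t^N(Q^l)=\frac{p_{t^-}^N}{N}\Big(\b{\L_t^m}^l-\b{\L_{t^-}^m}^l-M_t^{(l)}\Big),
\]
and the extra term $-\b{\L_{t^-}^m}^l$ pushes the bounds up to $3\|\ph\|_\infty/N$ for $l=1$ and $2\|\ph\|_\infty^2/N$ for $l=2$. These slightly weaker constants are all that is actually needed downstream: note that Lemma~\ref{oaichmp} already invokes the bound $|\Delta\gamma_{\tau_j}^N(Q^2)|\le 2\|\ph\|_\infty^2/N$ rather than $\|\ph\|_\infty^2/N$, and only the $O(1/N)$ order matters for the application in Proposition~\ref{lazicj}. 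The paper's own one-line justification before the Corollary does not treat the simultaneous jump of $p_t^N$ and $\L_t$ at branching times carefully either; the sharp constants as stated appear not to hold in general.
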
  \medskip 

The rest of the paper is mainly devoted the proof of the following result. We recall that $\overline{\cal D}$ is the closure of ${\cal D}$ with respect to the norm $\norm{\cdot}_{\infty}$.

\begin{Pro}\label{techprop}
Under Assumptions~\ref{ass:Dp}  and~\ref{ass:E}, for any $\ph$ in $\overline{\cal D}$, one has
$$\sqrt{N}\left(\gamma_T^N(\ph)-\gamma_T(\ph)\right)\xrightarrow[N\to\infty]{\cal D}{\cal N}(0,\sigma_T^2(\ph)),$$
where
$$\sigma_T^2(\ph) = p^2_T \Var_{\eta_T}(\ph) - p_T^2\ln(p_T) \, \eta_T(\ph)^2 - 2\int_0^T \Var_{\eta_{t}}(Q^{T-t}(\ph)) p_t dp_t.$$
\end{Pro}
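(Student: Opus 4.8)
The starting point is the martingale decomposition announced in the overview:
$$\gamma_T^N(\ph)-\gamma_T(\ph)=\Big(\gamma_T^N(Q)-\gamma^N_0(Q)\Big)+\Big(\eta^N_0(Q^T(\ph))-\eta_0(Q^T(\ph))\Big),$$
where $t\mapsto \gamma^N_t(Q)=\gamma^N_t(Q^{T-t}(\ph))$ is a \cadlag martingale. The second term already satisfies a CLT with asymptotic variance $\Var_{\eta_0}(Q^T(\ph))$ by the i.i.d.~initial condition (Remark~\ref{aljcneij}), and is independent — or at least asymptotically uncorrelated in the right filtration — from the martingale increment. So the whole problem reduces to proving a martingale CLT for $\sqrt N\,(\gamma^N_T(Q)-\gamma^N_0(Q))$, i.e.\ identifying the limit of the (predictable or optional) quadratic variation $N[\gamma^N(Q),\gamma^N(Q)]_T$ and checking a Lindeberg/jump-size condition. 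The jump-size condition is immediate from Corollary~\ref{mzoecjleijf}, since $|\Delta\gamma^N_t(Q)|\le 2\|\ph\|_\infty/N$, so $\sqrt N$ times the largest jump is $O(N^{-1/2})\to 0$; this gives the asymptotic-continuity hypothesis of the continuous-time martingale CLT (Theorem~\ref{aeichj}).

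\textbf{The heart: the quadratic variation.} I would first write $\gamma^N_t(Q)=p^N_t\L_t$ and expand its jumps. Two kinds of jumps occur: (a) free-evolution jumps of the underlying process, i.e.\ jumps of some $\L^n_t=Q^{T-t}(\ph)(X^n_t)$ not at a branching time, which by Assumption~\ref{ass:Dp}(ii)--(iii) never coincide with each other or with a branching; and (b) branching times, at which $p^N_t$ is multiplied by $(1-1/N)$ and one killed coordinate $\L^n$ is replaced by the value of a uniformly chosen survivor. Because $\calD$-functions give continuous $t\mapsto Q^t\ph(x)$ (so $\L^n$ has no "deterministic-time" jumps by Remark~\ref{alzichachi}) and because the killing distribution is atomless, I expect the purely-continuous-in-$t$ part of $\L$ to contribute, after the usual Feynman--Kac/Dynkin bookkeeping, a martingale whose bracket is controlled by the fluctuations of $\L^n_t$ around $\L_t$, i.e.\ by $\Var_{\eta^N_t}(Q^{T-t}\ph)$. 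The branching part contributes a bracket of the form $(\text{jump of }p^N)^2$ times a within-population variance, again of the shape $\Var_{\eta^N_t}(Q^{T-t}\ph)$, summed over branching times. Converting "$\sum_{\text{branchings}\le t}$" into "$\int_0^t(\cdot)\,d\calN^N_s$" and using that $(1-1/N)^{N\calN^N_t}=p^N_t$ turns the branching sum into a Stieltjes integral against $p^N_t$; passing to the limit, $p^N_t\to p_t$ uniformly on $[0,T]$ (this uniform convergence is exactly what the overview flags as needed, and should follow from Villemonais' consistency plus monotonicity via a Dini-type argument or a functional LLN for $\calN^N$), and $\eta^N_t\to\eta_t$ in the sense needed for $\Var_{\eta^N_t}(Q^{T-t}\ph)\to\Var_{\eta_t}(Q^{T-t}\ph)$, one gets a limiting bracket of the form
$$\sigma^2_\infty=p_T^2\Var_{\eta_T}(\ph)-\Var_{\eta_0}(Q^T\ph)+\big(\text{integral terms in }p_t\,dp_t\big),$$
and after adding back the initial-condition variance $\Var_{\eta_0}(Q^T\ph)$ the $\Var_{\eta_0}$ terms cancel and one lands on exactly
$$\sigma_T^2(\ph)=p^2_T\Var_{\eta_T}(\ph)-p_T^2\ln(p_T)\,\eta_T(\ph)^2-2\int_0^T\Var_{\eta_{t}}(Q^{T-t}\ph)\,p_t\,dp_t,$$
the $-p_T^2\ln(p_T)\eta_T(\ph)^2$ term coming from $d\calN^N$-integration of the constant $\eta_T(\ph)^2$ piece together with $\calN_T\to-\ln p_T$.

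\textbf{The key technical device and the main obstacle.} As stressed in the overview, the quadratic variation itself does not converge nicely (it is not an integral in time with bounded integrand, unlike in \cite{cdgr1}). The crucial trick is therefore to exhibit an \emph{adapted increasing process} $i^N_t$ — concretely, $i^N_t$ will be the "naive" candidate for the bracket, assembled from $\Var_{\eta^N_s}(Q^{T-s}\ph)$ integrated against $ds$ for the free part and against $d p^N_s$ (equivalently $d\calN^N_s$) for the branching part — such that $N[\gamma^N(Q),\gamma^N(Q)]_t-i^N_t$ is a local martingale with jumps $O(1/N^{?})$, hence $\to 0$; then it suffices to show $i^N_T\to\sigma^2_T(\ph)$ in probability. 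Proving $N[\gamma^N(Q),\gamma^N(Q)]_t-i^N_t$ is a local martingale requires writing the jumps of the bracket explicitly and matching their compensators, using Assumption~\ref{ass:Dp} to guarantee that free jumps and branchings never collide (so cross terms vanish) and that the compensator of the branching counting process is what one expects. The convergence $i^N_T\to\sigma^2_T(\ph)$ then rests on (1) uniform-in-time convergence $\sup_{[0,T]}|p^N_t-p_t|\to 0$, (2) an $L^1$ or $L^2$ control of $\eta^N_t(\psi)-\eta_t(\psi)$ uniform enough in $\psi$ to handle the $t$-dependent test function $Q^{T-t}\ph$, and (3) an integration-by-parts in $t$ to rewrite $\int_0^T(\cdots)\,dp^N_t$ as $\int_0^T(\cdots)\,dp_t$ plus vanishing remainders — this is where the "timewise integration by parts" mentioned in the overview enters. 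I expect the single hardest point to be step (3) combined with (1): controlling the Stieltjes integral against the \emph{random} measure $dp^N_t$ (which has atoms at branching times, unlike $dp_t$), and showing that these Riemann--Stieltjes sums converge to the deterministic limit, uniformly enough that the variance integrand $\Var_{\eta^N_t}(Q^{T-t}\ph)$ — itself only converging in probability and only pointwise in $t$ a priori — can be integrated against it. Establishing a uniform (in $t$, and in probability) law of large numbers for $\eta^N_t(Q^{T-t}\ph)$ and $\eta^N_t((Q^{T-t}\ph)^2)$, presumably via the $L^2$ estimates of Proposition~\ref{pro:estimate} together with a tightness/equicontinuity argument in $t$ using the continuity of $(x,t)\mapsto Q^t\ph(x)$ on $\calD$, is what makes the whole scheme go through, and is the part I would budget the most effort for.
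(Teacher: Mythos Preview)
Your skeleton is right: martingale decomposition, jump-size control via Corollary~\ref{mzoecjleijf}, the adapted increasing process $i^N_t$ differing from $N[\gamma^N(Q),\gamma^N(Q)]_t$ by a local martingale, the Dini argument for $\sup_t|p^N_t-p_t|\to 0$, and timewise integration by parts. These are all correct and match the paper's architecture.

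There is, however, a genuine gap in how you propose to handle the free-evolution part of the bracket. You write that the free-evolution contribution to $i^N_t$ is ``$\Var_{\eta^N_s}(Q^{T-s}\ph)$ integrated against $ds$'', and later that you will prove convergence of $i^N_T$ via a uniform-in-$t$ law of large numbers for $\eta^N_t((Q^{T-t}\ph)^2)$ obtained by equicontinuity from the continuity of $(x,t)\mapsto Q^t\ph(x)$. Both points fail. First, in the hard-killing setting there is \emph{no} $ds$-representation of the free-evolution bracket: the process $\A_t=\frac1N\sum_n[\M^n,\M^n]_t$ is an increasing process about which nothing explicit is known---this is precisely the obstacle the overview flags, and the difference with~\cite{cdgr1}. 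Second, your equicontinuity argument invokes the continuity of $(x,t)\mapsto Q^t\ph(x)$, which is Assumption~\ref{ass:D}(ii), whereas the proposition is stated under the weaker Assumption~\ref{ass:Dp}; so even if the argument worked it would not prove the stated result.

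The idea you are missing is Lemma~\ref{lemgQ2}: one does not try to understand $\A_t$ directly, but instead observes that $d\gamma^N_t(Q^2)=p^N_{t^-}\,d\A_t+\frac{1}{\sqrt N}\,p^N_{t^-}\,d\tM_t$ for some auxiliary local martingale $\tM$. This converts the opaque $\int(p^N_{u^-})^2\,d\A_u$ appearing in $i^N_t$ into $\int p^N_{u^-}\,d\gamma^N_u(Q^2)$ plus a small martingale remainder. After an integration by parts (Lemma~\ref{oaichmp}) this becomes boundary terms in $p^N_t\gamma^N_t(Q^2)$ and $\gamma^N_t(Q)^2\ln p^N_t$---which converge by the \emph{pointwise} $\L^2$-estimate of Proposition~\ref{pro:estimate}---plus the integral $\int\gamma^N_{u^-}(Q^2)\,dp^N_u$. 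The latter is compared to $\int\gamma_u(Q^2)\,dp_u$ by one more integration by parts, yielding $\int(p^N_{u^-}-p_u)\,d\gamma^N_u(Q^2)$, and then Lemma~\ref{lemgQ2} is used \emph{again} to bound this by $\sup_u|p^N_u-p_u|\cdot\int p^N_{u^-}\,d\A_u$ plus a vanishing martingale term; the integral $\int p^N_{u^-}\,d\A_u$ has expectation at most $\|\ph\|_\infty^2$ by~\eqref{major}. In short, the $\A\leftrightarrow\gamma^N(Q^2)$ relation is used twice and is what allows the proof to avoid any uniform-in-$t$ control of $\eta^N_t$ beyond the scalar $p^N_t$. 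Without it, your step~(2)/(3) does not close.
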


Thanks to Lemma~\ref{mzoecj}, the latter yields Theorem~\ref{gamma}.

\subsection{Martingale decomposition}\label{sec:martdec}

This section will build upon the martingale representation of \cite{v14}.

We decompose the process $t \mapsto \gamma^N_t \p{Q}$ into the martingale contributions of the Markovian evolution of particle $n$ between branchings $k$ an $k+1$, which will be denoted $t \mapsto \M^{n,k}_t$, and the martingale contributions of the $k$-th branching of particle $n$, which will be denoted $t \mapsto \calM^{n,k}_t$.\medskip

\begin{Rem}
Throughout the paper, all the local martingales are local with respect to the sequence of stopping times $(\tau_j)_{j\geq 1}$. As required, this sequence of stopping times satisfies $\lim_{j\to\infty}\tau_j>T$ almost surely by Assumption \ref{ass:E}.
\end{Rem}
Recall that we have defined for each $1 \leq n \leq N$ and any $t\in[0,T]$, $\L^n_t \eqdef Q^{T-t}(\ph)(X_{t}^n)$, and $\L_t \eqdef \frac{1}{N}\sum_{n=1}^N\L^n_t$, so that $$\gamma_t^N(Q)= \gamma_t^N(Q^{T-t}(\ph))=p_t^N\L_t.$$

If $\widetilde{X}_t$ is any particle evolving according to the dynamic of the underlying Markov process for (and only for) $t < \tau_\partial$, then it is still true that $Q^{T-t}(\ph)(\widetilde{X}_t)\un_{t < \tau_\partial}$ is a martingale. As a consequence, for any $n\in\{1,\dots,N\}$ and any $k\ge 1$, Doob's optional sampling theorem ensures that by construction of the particle system that the process
\begin{align}\label{lmdkcjm}
\M_t^{n,k} \eqdef \Big(\un_{t<\tau_{n,k}}\L_t^n-\L_{\tau_{n,k-1}}^n\Big)\un_{t\geq\tau_{n,k-1}}=
\begin{cases}
\dps 0 & \text{if } t < \tau_{n,k-1}  \\
\dps {\L^n_{t}} - {\L^n_{\tau_{n,k-1}}} &  \text{if }  \tau_{n,k-1} \leq t < \tau_{n,k} \\
\dps - {\L^n_{\tau_{n,k-1}}} &  \text{if }  \tau_{n,k} \leq t
\end{cases}
\end{align}
is a bounded martingale. Accordingly, under Assumption \ref{ass:E}, the processes
\begin{align}
& \M_t^{n} \eqdef\sum_{k = 1}^\infty \M_t^{n,k}=\L^n_t-\sum_{0 \leq \tau_{n,k}\leq t }\L^n_{\tau_{n,k}},\label{mbbtn}\\
& \M_t \eqdef\frac{1}{\sqrt{N}}\sum_{n=1}^N \M_t^{n},\label{mbbt}
\end{align}
are local martingales.  \medskip

For any $n\in\{1,\dots,N\}$ and any $k\ge 1$, we also consider the process
\begin{equation}\label{aicj}
\calM_t^{n,k} \eqdef
\Big(1-\frac{1}{N}\Big)\Big(\L^n_{\tau_{n,k}}-\frac1{N-1} \sum_{m \neq n}\L^m_{\tau_{n,k}}\Big) 
\un_{t \geq \tau_{n,k}} = \p{ \L^n_{\tau_{n,k}} - \L_{\tau_{n,k}} } \un_{t \geq \tau_{n,k}},
\end{equation}
which by Lemma~\ref{albcios} is a constant martingale with a single jump at $t=\tau_{n,k}$, and which is clearly bounded by $2 \norm{\ph}_\infty$. Then, under Assumption \ref{ass:E}, the processes
\begin{align*}
&\calM_t^{n}\eqdef\sum_{k=1}^\infty\calM_t^{n,k}=\sum_{0 \leq \tau_{n,k}\leq t}\L^n_{\tau_{n,k}} - \L_{\tau_{n,k}},\\
& \calM_t\eqdef\frac{1}{\sqrt{N}}\sum_{n=1}^N\calM_t^n,
\end{align*}
are also local martingales. Recalling the notation $$
\calN^{n}_t\eqdef   \sum_{k\geq 1} \un_{\tau_{n,k}\leq t}
$$
for the number of branchings of particle $n$ before time $t$ and  
$$
\calN_t\eqdef   \frac1N \sum_{n=1}^N \calN^{n}_t = \frac1N \sum_{j\geq 1} \un_{\tau_{j}\leq t}
$$
the total number of branchings per particle before time $t$, \eqref{lmdkcjm} and~\eqref{aicj} respectively implies for each $ 1 \leq n \leq N$ that %
\begin{align}
& d\M^n_t= d \L^n_t-\L_{t}^n d \calN^n_t\label{dmt}\\
&d\calM^n_t =\big(\L^n_{t}- \L_{t} \big) d \calN^n_t,\label{dcalm}
\end{align}
so that in the sum yields
\begin{align}\label{eq:L}
& d\M_t+d \calM_t = \sqrt{N}\p{d \L_t-\L_{t}  \,  d \calN_t}.
\end{align}
Let us emphasize that, in the above equations, $\L_t=\L_{t^+}$ since the process $\L_t$ is right-continuous. \medskip

\begin{Rem}\label{rem:jumps}
 By definition, the jumps of the martingales $\M_t^n$ is included in the union of the set of jumps of $\L_t^n$, and the set of the branching times $\tau_{n,k}$ for $k \geq 1$. The jumps of $\calM_t^n$ are included in the set of the branching times $\tau_{n,k}$ for $k \geq 1$. Therefore, Assumption~\ref{ass:Dp} implies that for $m\neq n$, the jumps of $\M_t^m$ and $\M_t^n$ can't happen at the same time, the same being true for $\M_t^m$ and $\calM_t^n$.
\end{Rem}

The following rule will be useful throughout the paper.
\begin{Lem} Recalling that $p^N_t = (1 - \frac1N)^{N\calN_t}$, it holds that
 \begin{equation}\label{eq:count_diff}
d p^N_t=-p^N_{t^-} d \calN_t .
 \end{equation}
\end{Lem}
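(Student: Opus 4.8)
The plan is to treat both $\calN$ and $p^N$ as pure-jump processes of finite variation and simply match their jumps; this is the \cadlag analogue of the elementary identity $d(e^{-ct})=-c\,e^{-ct}\,dt$, with $p^N$ playing the role of a ``discrete exponential'' of $-\calN$. First I would record the structure of $\calN$: by Assumption~\ref{ass:E}, on $[0,T]$ the counting process $\calN_t=\frac1N\sum_{j\geq1}\un_{\tau_j\leq t}$ has almost surely only finitely many jumps; it is \cadlag, non-decreasing, constant between consecutive branching times, and, by Assumption~\ref{ass:Dp}(i), exactly one particle branches at each time $\tau_j$, so that $N\calN_t$ jumps by exactly one and $\Delta\calN_{\tau_j}=\frac1N$. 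Consequently $p^N_t=\p{1-\frac1N}^{N\calN_t}$ inherits this structure: it is \cadlag, of finite variation, and constant on every interval on which $\calN$ is constant, so that both the Stieltjes measure $dp^N_t$ and $-p^N_{t^-}\,d\calN_t$ are purely atomic and supported on the (finite) set of branching times.

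It then remains to identify the atoms. At a branching time $\tau_j$ one has $N\calN_{\tau_j}=N\calN_{\tau_j^-}+1$, hence the multiplicative recursion
$$p^N_{\tau_j}=\p{1-\tfrac1N}^{N\calN_{\tau_j^-}+1}=\p{1-\tfrac1N}\,p^N_{\tau_j^-},$$
which gives $\Delta p^N_{\tau_j}=p^N_{\tau_j}-p^N_{\tau_j^-}=-\tfrac1N\,p^N_{\tau_j^-}=-p^N_{\tau_j^-}\,\Delta\calN_{\tau_j}$. Summing over the finitely many branching times up to $t$ (legitimate precisely because of Assumption~\ref{ass:E}) and using that neither process has a continuous part yields
$$p^N_t-p^N_0=\sum_{0<\tau_j\leq t}\Delta p^N_{\tau_j}=-\sum_{0<\tau_j\leq t}p^N_{\tau_j^-}\,\Delta\calN_{\tau_j}=-\int_0^t p^N_{s^-}\,d\calN_s,$$
which is exactly the claimed differential identity $dp^N_t=-p^N_{t^-}\,d\calN_t$.

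I do not expect any genuine obstacle. The only two points that deserve explicit mention are: (a) invoking Assumption~\ref{ass:E} to guarantee finitely many branchings on $[0,T]$, so that $p^N$ and $\calN$ are genuine finite-variation pure-jump processes with no continuous component and the telescoping sum above is valid; and (b) invoking Assumption~\ref{ass:Dp}(i) so that each branching time produces a jump of $N\calN$ of size exactly one, ensuring the recursion factor is exactly $1-\frac1N$ rather than $(1-\frac1N)^{r}$ for some $r\geq 2$.
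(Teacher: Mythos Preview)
Your proof is correct and follows essentially the same approach as the paper: both arguments compute the jump $\Delta p^N_{\tau_j}=-\tfrac1N\,p^N_{\tau_j^-}=-p^N_{\tau_j^-}\Delta\calN_{\tau_j}$ at each branching time and use that both sides are pure-jump processes. Your version is simply more detailed, making explicit the finite-variation structure and the invocation of Assumptions~\ref{ass:E} and~\ref{ass:Dp}(i), whereas the paper compresses the whole thing into a one-line jump computation.
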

\begin{proof}
 One has $\Delta p^N_t = (1 - \frac1N)^{N\calN_t} - (1 - \frac1N)^{N\calN_t-1} = (1 - \frac1N)^{N\calN_t-1} \p{1-\frac1N -1 }$ while $\Delta \calN_t = \frac1N$ for $t = \tau_j$, $j \geq 1$. Hence the result.
\end{proof}

The upcoming result attests that the process $t\mapsto\gamma_t^N(Q)$ is indeed a martingale and details its decomposition.

\begin{Lem}\label{lem:decomp}
We have the decomposition
\begin{equation}\label{eq:decomp}
\gamma_t^N(Q) = \gamma_0^N(Q) +  \frac{1}{\sqrt{N}}\int_0^t p^N_{u^-} \p{ d\M_u + d\calM_u}.
\end{equation}
\end{Lem}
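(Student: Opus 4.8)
The plan is to start from the definition $\gamma_t^N(Q)=p_t^N\L_t$ and apply the integration by parts formula for \cadlag semimartingales (see \cite{protter}, Chapter II) to the product $p_t^N\L_t$. Since both $t\mapsto p_t^N$ and $t\mapsto\L_t$ are \cadlag processes of finite variation on compacts between consecutive branching times (and $p_t^N$ is globally of finite variation, being a decreasing jump process), the product rule reads
$$d\p{p_t^N\L_t}=p_{t^-}^N\,d\L_t+\L_{t^-}\,dp_t^N+d[p^N,\L]_t.$$
The key observation is to compute the bracket term $d[p^N,\L]_t$. The process $p^N$ is a pure-jump process whose jumps occur exactly at the branching times $\tau_j$, with $\Delta p_t^N=-p_{t^-}^N\Delta\calN_t$ by \eqref{eq:count_diff}. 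Hence $[p^N,\L]_t=\sum_{0<\tau_j\le t}\Delta p_{\tau_j}^N\,\Delta\L_{\tau_j}$, so I must control the jump $\Delta\L_{\tau_j}$ of $\L_t=\frac1N\sum_n\L_t^n$ at a branching time. By Assumption~\ref{ass:Dp}(iii), at the branching time $\tau_{m,j}$ of particle $m$, no \emph{other} particle's process $\L^n_t$ jumps; and by construction, the branching particle $m$ is given the state of a uniformly chosen survivor, so $\L^m_{\tau_{m,j}}$ is assigned a new value. So $\Delta\L_{\tau_{m,j}}=\frac1N(\L^m_{\tau_{m,j}}-\L^m_{\tau_{m,j}^-})$, and combining the bracket contribution with the $\L_{t^-}dp_t^N=-\L_{t^-}p_{t^-}^N d\calN_t$ term should, after telescoping, reproduce exactly the $-\L_t\,d\calN_t$ structure appearing in \eqref{eq:L}.

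Concretely, the next step is to substitute \eqref{eq:L}, i.e. $d\M_t+d\calM_t=\sqrt{N}\p{d\L_t-\L_t\,d\calN_t}$, into the integration by parts expansion. After collecting terms one expects
$$d\gamma_t^N(Q)=p_{t^-}^N\,d\L_t-p_{t^-}^N\L_{t}\,d\calN_t=\frac{p_{t^-}^N}{\sqrt N}\p{d\M_t+d\calM_t},$$
where the fact that $\L_t=\L_{t^+}$ (noted after \eqref{eq:L}) together with Assumption~\ref{ass:Dp}(i)--(iii) is used to identify $p_{t^-}^N\L_t\,d\calN_t$ with the relevant combination of $p_{t^-}^N d\L_t$ and the bracket. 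Integrating from $0$ to $t$ and recalling $\gamma_0^N(Q)=p_0^N\L_0$ (with $p_0^N=1$) then gives \eqref{eq:decomp}. Finally, since each $\M^{n,k}$ and $\calM^{n,k}$ is a (bounded) martingale and, under Assumption~\ref{ass:E}, the localizing sequence $\tau_j\uparrow$ eventually exceeds $T$, the stochastic integral $\int_0^\cdot p_{u^-}^N\p{d\M_u+d\calM_u}$ is a genuine local martingale, so the right-hand side of \eqref{eq:decomp} is indeed of martingale type as claimed.

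The main obstacle I anticipate is the careful bookkeeping of jumps at branching times: one must make sure that the jump of $\L_t$ at $\tau_{m,j}$ is correctly attributed (only particle $m$ moves, by Assumption~\ref{ass:Dp}(iii), and particles $m$ and $n$ never jump simultaneously for $m\ne n$, by (ii)), and that the $\un_{t\ge\tau_{n,k-1}}$ and $\un_{t<\tau_{n,k}}$ indicators in the definitions \eqref{lmdkcjm}--\eqref{aicj} of $\M^{n,k}_t$ and $\calM^{n,k}_t$ telescope correctly into \eqref{dmt}--\eqref{dcalm}. The left/right-continuity subtleties — distinguishing $\L_t$, $\L_{t^-}$, $p_t^N$, $p_{t^-}^N$ and using $\L_t=\L_{t^+}$ — are exactly where a naive computation would go wrong, so this is the step that requires genuine care rather than routine manipulation.
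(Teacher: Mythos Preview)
Your proposal is correct and follows essentially the same route as the paper. The only difference is presentational: the paper compresses your bracket computation into a single line by writing the integration by parts directly as $d(p^N_t\L_t)=p^N_{t^-}\,d\L_t+\L_t\,dp^N_t$ with the \emph{right-continuous} integrand $\L_t$, exploiting that $p^N$ is piecewise constant (so $\L_{t^-}\,dp^N_t+d[p^N,\L]_t=\L_t\,dp^N_t$ automatically); it then substitutes $dp^N_t=-p^N_{t^-}\,d\calN_t$ and invokes \eqref{eq:L}. Your explicit treatment of $[p^N,\L]$ and of the jump structure at branching times is the unpacked version of that same step. The concerns you flag about telescoping the indicators in \eqref{lmdkcjm}--\eqref{aicj} are already handled in the paper by equations \eqref{dmt}--\eqref{eq:L}, which precede the lemma, so you may simply cite them rather than re-derive them.
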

\begin{proof}
Recalling that $p^N_t$ is a piecewise constant process, one has by plain integration by parts
\begin{align*}
\gamma_t^N(Q)= p^N_t \L_t=\gamma_0^N(Q)+ \int_0^t \p{ p^N_{u^-} d\L_u+\L^n_{u} d p^N_u },
\end{align*}
where we emphasize that in the above equation, the last integrand is indeed $\L_u=\L_{u^+}$.
Besides, by~\eqref{eq:count_diff}, we are led to
$$
\gamma_t^N(Q) - \gamma_0^N(Q) = \int_0^t p^N_{u^-} \p{ d \L_u- \L_{u} d \calN_u }.
$$
The result is then a direct consequence of~\eqref{eq:L}.
\end{proof}
\begin{Rem}
Since $\gamma_T(\ph)=\gamma_0( Q^T  \ph)$, this implies the unbiasedness property $\E\b
 {\gamma^N_T(\ph)}=\gamma_T(\ph)$ for all $N \geq 2$. In particular, the case $\ph=\un_F$ gives $\E\b{p^N_T(\ph)}= p_T > 0$.
\end{Rem}

\subsection{Quadratic variation estimates}\label{sec:martquad}

The remarkable fact is that the $2N$ martingales $\set{\M^n_t,\calM^m_t}_{1 \leq n,m \leq N}$ are mutually orthogonal. We recall that two local martingales are orthogonal if their quadratic covariation is again a local martingale.
 
 \begin{Lem} \label{Lem:quad} Under Assumptions~\ref{ass:Dp} and~\ref{ass:E}, the $N^2$ local martingales $\set{\M^n_t,\calM^m_t}_{1 \leq n,m \leq N}$ are mutually orthogonal. In addition,
$$\b{ \calM,\calM}_t = \frac{1}{N}\sum_{n=1}^N\b{ \calM^n,\calM^n}_t.$$
Orthogonality implies that the process $\b{ \M,\cal M}_t$ is a local martingale, and denoting
\begin{align*} 
\A_t \eqdef \frac{1}{N}\sum_{n=1}^N\b{ \M^n,\M^n}_t, 
\end{align*}
that the process $\b{ \M,\M}_t-\A_t$ 
is also a local martingale. In addition, the jumps of $\A$ are controlled by
\begin{align}\label{ajump}
\Delta\A_t \leq \frac{\|\ph\|_\infty^2}{N}.
\end{align}
\end{Lem}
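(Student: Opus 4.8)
The plan is to prove the three assertions of Lemma~\ref{Lem:quad} in the order: mutual orthogonality, the diagonal formula for $\b{\calM,\calM}_t$, and the jump bound on $\A_t$. The backbone of the argument is the structural description of the jumps recorded in Remark~\ref{rem:jumps} together with the branching-time non-simultaneity of Assumption~\ref{ass:Dp}. Recall that for two \cadlag local martingales $U,V$ that are local with respect to $(\tau_j)$, the quadratic covariation is the limit of Riemann--Stieltjes sums and, on a purely jump part, $[U,V]_t = \sum_{s \le t} \Delta U_s \, \Delta V_s$ plus the continuous covariation of the continuous martingale parts. So the first step is to observe that for $m \neq n$ the pairs $(\M^m,\M^n)$, $(\M^m,\calM^n)$, $(\calM^m,\calM^n)$ have, by Remark~\ref{rem:jumps}, no common jump times almost surely, hence $[U,V]_t \equiv 0$ for the pure-jump contribution; and since $\calM^n$ is pure jump (constant between branchings), while the only continuous part present is that of the $\M^n$'s which come from the \emph{independent} Markovian excursions of distinct particles, the continuous covariation of $\M^m$ and $\M^n$ for $m\neq n$ also vanishes (here one uses that between branchings the particles evolve independently, so $\M^{m,j}$ and $\M^{n,k}$ are martingales in independent filtrations up to the relevant stopping times — more carefully, one localizes at the branching times and uses that on each stochastic interval the two excursion martingales are orthogonal). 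This gives $[U,V]_t = 0$, which is in particular a local martingale, so $U \perp V$. For the same pair $(\M^n,\calM^n)$ on the \emph{same} particle: a branching time $\tau_{n,k}$ is a jump time of $\calM^{n}$ and also of $\M^{n}$ (the term $-\L^n_{\tau_{n,k}}$ in~\eqref{mbbtn}), so one must actually compute $[\M^n,\calM^n]_t = \sum_{0\le \tau_{n,k}\le t} \Delta\M^n_{\tau_{n,k}} \, \Delta\calM^n_{\tau_{n,k}}$; the point is that $\Delta\calM^n_{\tau_{n,k}} = \L^n_{\tau_{n,k}}-\L_{\tau_{n,k}}$ is, conditionally on $\calF_{\tau_{n,k}^-}$, mean zero (the reborn value is uniform among the $N-1$ survivors), while $\Delta\M^n_{\tau_{n,k}} = -\L^n_{\tau_{n,k}^-}$ (or $\L^n_{\tau_{n,k}}-\L^n_{\tau_{n,k}^-}$ depending on the case) — combining these one shows the predictable compensator of $[\M^n,\calM^n]$ vanishes, so it is a local martingale; equivalently, one invokes Lemma~\ref{albcios} (which gives that $\calM^{n,k}$ is a martingale with a single jump) and the fact that $\M^{n,k}$ has already stopped at $\tau_{n,k}^-$ in the relevant sense, so the product telescopes to a martingale.

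Next, the formula $\b{\calM,\calM}_t = \frac1N\sum_n \b{\calM^n,\calM^n}_t$. Since $\calM = \frac1{\sqrt N}\sum_n \calM^n$, bilinearity gives $\b{\calM,\calM}_t = \frac1N\sum_{m,n}\b{\calM^m,\calM^n}_t$, and the off-diagonal terms $m\neq n$ are predictable compensators of $[\calM^m,\calM^n]$ which we just showed are (local martingales, hence, being also predictable and of finite variation starting at $0$) identically zero; alternatively, directly $[\calM^m,\calM^n]_t=0$ because $\calM^m,\calM^n$ are pure jump with disjoint jump sets by Assumption~\ref{ass:Dp}(i), and the predictable bracket of the zero process is zero. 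Hence only the diagonal survives.

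For the jump bound~\eqref{ajump} on $\A_t = \frac1N\sum_n \b{\M^n,\M^n}_t$: the jumps of a predictable quadratic variation satisfy $\Delta\b{\M^n,\M^n}_t = \E[(\Delta\M^n_t)^2 \mid \calF_{t^-}]$ (the predictable projection of the jump of $[\M^n,\M^n]$), and since $\L^n_t = Q^{T-t}(\ph)(X^n_t)$ is bounded by $\|\ph\|_\infty$, the jump $\Delta\M^n_t$ is bounded by $2\|\ph\|_\infty$ in absolute value, giving $\Delta\b{\M^n,\M^n}_t \le 4\|\ph\|_\infty^2$; but this is too crude by a factor. The sharper statement $\Delta\A_t \le \|\ph\|_\infty^2/N$ comes from two facts: first, by Remark~\ref{rem:jumps} at most one of the $\M^n$ has a jump at any given time $t$ (the jump sets are disjoint for $m\neq n$), so $\Delta\A_t = \frac1N\Delta\b{\M^{n_0},\M^{n_0}}_t$ for the unique relevant index $n_0$; second, one needs $\Delta\b{\M^{n_0},\M^{n_0}}_t \le \|\ph\|_\infty^2$, which should follow from a variance-type bound: at a branching time $\Delta\M^{n_0}_t = -\L^{n_0}_{t^-}$ is \emph{deterministic} given $\calF_{t^-}$ so the predictable jump of the bracket is $(\L^{n_0}_{t^-})^2 \le \|\ph\|_\infty^2$ (no, more care: the bracket jump is the conditional variance of the jump, which for a deterministic jump is $0$) — so the relevant jumps are the genuinely random ones coming from jumps of the underlying process $X^{n_0}$, where $\Delta\b{\M^{n_0},\M^{n_0}}_t = \Var(\Delta\L^{n_0}_t \mid \calF_{t^-})$ and since $\L^{n_0}$ takes values in an interval of length $\le \|\ph\|_\infty$... actually the oscillation of $Q^{T-t}(\ph)$ is at most $\|\ph\|_\infty$ because $Q^{T-t}$ is sub-Markovian and $\ph$ and $0$ bracket its range, so a bounded random variable with values in an interval of length $\|\ph\|_\infty$ has variance $\le \|\ph\|_\infty^2/4 \le \|\ph\|_\infty^2$. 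I would spell this out carefully.

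The main obstacle I anticipate is the orthogonality between $\M^n$ and $\calM^n$ on the \emph{same} particle, and more generally making the "pure-jump martingales with disjoint jump sets are orthogonal" and "predictable-projection-of-the-jump" arguments rigorous in the \cadlag, only-locally-martingale setting; one has to be careful that the compensators exist and that localization along $(\tau_j)_{j\ge1}$ (which, by Assumption~\ref{ass:E}, exhausts $[0,T]$) is legitimate. The continuous-part orthogonality for $m\neq n$ — formally "independent excursions" — also requires a clean statement: I would reduce it to the elementary fact that if $U$ is an $\calF^U$-martingale and $V$ an $\calF^V$-martingale with $\calF^U,\calF^V$ independent, then $UV$ is a martingale in the joint filtration, applied on each stochastic interval $[\tau_j,\tau_{j+1})$ after conditioning on $\calF_{\tau_j}$ and on which particle branches.
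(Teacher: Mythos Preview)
Your overall plan is close to the paper's, but there is one genuine gap and one place where you work harder than necessary.

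\textbf{The jump bound on $\A$.} In this paper the bracket $\b{\cdot,\cdot}$ is the \emph{optional} quadratic variation $[\cdot,\cdot]$, not the predictable one $\langle\cdot,\cdot\rangle$ (the macro $\b{\cdot}$ literally typesets square brackets). Hence $\A_t = \frac1N\sum_n [\M^n,\M^n]_t$ and its jump at any time $t$ is simply
\[
\Delta\A_t \;=\; \frac1N \max_{1\le n\le N} (\Delta\M^n_t)^2,
\]
the maximum appearing because at most one $\M^n$ jumps at a time (Assumption~\ref{ass:Dp} via Remark~\ref{rem:jumps}). From~\eqref{lmdkcjm}--\eqref{mbbtn} one reads off $|\Delta\M^n_t| \le \|\ph\|_\infty$ at branching times (the jump is $-\L^n_{t^-}$), giving the stated bound. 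Your detour through conditional variances, predictable projections, and the oscillation of $Q^{T-t}(\ph)$ stems entirely from misreading $[\,\cdot\,]$ as $\langle\,\cdot\,\rangle$; none of it is needed, and the formula $\Delta\b{\M^n,\M^n}_t = \E[(\Delta\M^n_t)^2\mid\calF_{t^-}]$ is false here.

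\textbf{Orthogonality of $\M^m$ and $\M^n$ for $m\neq n$.} You aim for the stronger statement $[\M^m,\M^n]\equiv 0$, arguing separately that the jump sets are disjoint and that the continuous parts have zero covariation ``by independence of the excursions''. The paper does not attempt this; it proves only what is needed, namely that $\M^m\M^n$ is a martingale, by iterating conditional expectations along the successive branching times $\sigma_i=(\tau_i\wedge T)\vee s$: conditional independence of the two particles on $[\sigma_{i-1},\sigma_i)$ gives $\E[\M^m_{\sigma_i^-}\M^n_{\sigma_i^-}\mid\calF_{\sigma_{i-1}}]=\M^m_{\sigma_{i-1}}\M^n_{\sigma_{i-1}}$, and the jumps at $\sigma_i$ are handled using that they never occur simultaneously. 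This is essentially the argument you sketch in your last paragraph; you should promote it to the main proof rather than trying to kill the covariation process pointwise.

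\textbf{Orthogonality of $\M^n$ and $\calM^n$.} The paper's argument here is a one-liner you should adopt: since $\calM^n$ is pure jump with jumps only at the $\tau_{n,k}$, one has $d[\M^n,\calM^n]_t = \Delta\M^n_t\, d\calM^n_t = -\L^n_{t^-}\, d\calM^n_t$, which is the stochastic integral of a bounded predictable process against a local martingale and hence a local martingale. Your version circles around this but never lands on the clean identity.
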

\begin{proof}
By Assumption~\ref{ass:Dp} (see also Remark~\ref{rem:jumps}), for 
$n \neq m$, the piecewise constant martingales $\calM_t^n$ and $\calM_t^m$ do not vary at the same times, so that $\b{\calM^n,\calM^m}_t=0$ and the two martingales are {\it a fortiori} orthogonal. \medskip

In the same manner, for 
$n \neq m$, the martingales $\calM^n_t$ and $ \M^m_t$ do not vary at the same times, so that $\b{\calM^n,\M^m}_t=0$ and the two martingales are {\it a fortiori} orthogonal.\medskip

Moreover, since $\calM^n$ is a pure jump martingale, we have by definition of $\M_t^{n}$
$$d\b{ \M^{n},\calM^{n}}_t =  \Delta\M^n_t d\calM^n_t=- \L_{t^-}^n d\calM^n_t,$$ 
which defines a martingale, so that $\M^n$ and $\calM^n$ are orthogonal.\medskip 

Next, %
we claim that the product $\M^m\M^n$ is a martingale, implying the orthogonality.
Indeed, for a given $s \in [0,T]$, let us define $\sigma_i \eqdef (\tau_i \wedge T) \vee s$ the stopping time in $[s,T]$ closest to the $i$-th branching time. For any $i\ge 1$, conditional to $\mathcal F_{\sigma_{i-1}}$, $(\M^n_t1_{t<\sigma_i})_{t\ge 0}$ 
and $(\M^m_t1_{t<\sigma_i})_{t\ge 0}$ are by construction independent, 
hence
\begin{align*}
\E\left[\left.\M^m_{\sigma_i^-}\M^n_{\sigma_i^-}\right|\mathcal F_{\sigma_{i-1}}\right]=\M^m_{\sigma_{i-1}}\M^n_{\sigma_{i-1}}.
\end{align*}
In addition, since the martingales $\M^m$ and $\M^n$ do not jump simultaneously, it yields $\M^m_{\sigma_i}\M^n_{\sigma_i} = \M^m_{\sigma_i}\M^n_{\sigma^-_i}+\M^m_{\sigma^-_i}\M^n_{\sigma_i} - \M^m_{\sigma^-_i}\M^n_{\sigma^-_i}$, so that
\begin{align*}
 \E[\M^m_{\sigma_i}\M^n_{\sigma_i}|\mathcal F_{\sigma_i^-}]
& =\E[\M^n_{\sigma_i^-}(\M^m_{\sigma_i}-\M^m_{\sigma_i^-})+\M^m_{\sigma_i^-}(\M^n_{\sigma_i}-\M^n_{\sigma_i^-})+\M^m_{\sigma_i^-}\M^n_{\sigma_i^-}|\mathcal F_{\sigma_i^-}]\\
&= \M^m_{\sigma_i^-}\M^n_{\sigma_i^-},
\end{align*}
and combining these equations gives
\begin{align*}
 \E\left[\left.\M^m_{\sigma_i}\M^n_{\sigma_i}\right|\mathcal F_{\sigma_{i-1}}\right]= \M^m_{\sigma_{i-1}}\M^n_{\sigma_{i-1}}.
\end{align*}
By iterating on $i \ge 1$ and taking into account that $\sigma_0 = s$ and $\lim_{i \to +\infty} \sigma_i = T$, we obtain
\begin{align*}
 \E\left[\left.\M^m_T\M^n_T\right|\mathcal F_{s}\right]=  \M^m_{s}\M^n_{s},
\end{align*}
which shows the claimed result. \medskip

For the last point, Assumption~\ref{ass:Dp} guarantees that
$$\Delta\A_t=\frac{1}{N}\max_{1\leq n\leq N}\Delta[\M^n,\M^n]_t=\frac{1}{N}\max_{1\leq n\leq N}\left(\Delta\M_t^n\right)^2,$$
and the indicated result is now a direct consequence of (\ref{lmdkcjm}) and (\ref{mbbtn}).
\end{proof}

In the same way as~\eqref{eq:M_t}, we use in the upcoming lemma the notation for each $t \in [0,T]$,
\begin{align}
 \Var_{\eta^N_{t}}(Q) & \eqdef \Var_{\eta^N_{t}}(Q^{T-t}(\ph))
  = \frac1N \sum_{n=1}^N (\L_{t}^n)^2 - \p{\frac1N \sum_{n=1}^N \L_{t}^n }^{2} . \label{eq:not_var_emp}
\end{align}

\begin{Lem}\label{lemab}One has
  \begin{align}\label{mrondcroch}
  d\b{\calM,\calM}_t\le 4\|\ph\|_\infty^2 d\calN_t.
  \end{align}
  Moreover, there exist a piecewise constant local martingale $\tcalM_t$ and a piecewise constant process $\calR_t$, 
both with jumps at branching times, such that
  \begin{align}%
d\b{\calM,\calM}_t
&=\Var_{\eta^{N}_{t^-}}(Q) d \calN_t + \frac1N d\calR_t + \frac{1}{\sqrt{N}}d\tcalM_t, \label{lemab1}
 \end{align}
 with the following estimate
 \begin{align}\label{lemab2}
|\Delta \calR_{t}|\le  \frac{14 \norm{\ph}_{\infty}^2}{N}.
 \end{align}
\end{Lem}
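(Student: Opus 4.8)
The plan is to compute the quadratic variation of $\calM$ explicitly from the single-branching jumps and then split it into a predictable-type leading term, a remainder, and a fluctuating martingale. Since each $\calM^{n,k}$ is a pure-jump (in fact piecewise constant) martingale with a single jump of size $\L^n_{\tau_{n,k}}-\L_{\tau_{n,k}}$ at $t=\tau_{n,k}$, and since by Lemma \ref{Lem:quad} the $\calM^n$ are mutually orthogonal with $\b{\calM,\calM}_t=\frac1N\sum_n\b{\calM^n,\calM^n}_t$, we get
$$
d\b{\calM,\calM}_t=\frac1N\sum_{n=1}^N\p{\L^n_{t}-\L_{t}}^2 d\calN^n_t .
$$
Each jump is bounded by $2\|\ph\|_\infty$, so $(\L^n_t-\L_t)^2\le 4\|\ph\|_\infty^2$ and summing against $d\calN^n_t$ with $d\calN_t=\frac1N\sum_n d\calN^n_t$ gives \eqref{mrondcroch} immediately.

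For \eqref{lemab1} the idea is to rewrite the quantity $\p{\L^n_{t}-\L_{t}}^2$ that appears when particle $n$ branches in terms of the empirical variance $\Var_{\eta^N_{t^-}}(Q)$ of \emph{all} particles just before the branching. First I would note that at a branching time $\tau_{n,k}$ only particle $n$ jumps (Assumption \ref{ass:Dp}(i) and (iii)), so $\L^m_{\tau_{n,k}^-}=\L^m_{\tau_{n,k}}$ for $m\neq n$, and $\eta^N_{\tau_{n,k}^-}$ differs from what one would get by averaging over $m\neq n$ only through the $1/N$ versus $1/(N-1)$ weights. Writing $\bar\L^{(n)}\eqdef\frac1{N-1}\sum_{m\neq n}\L^m$, a direct algebraic manipulation expresses $\p{\L^n-\L}^2=\p{1-\frac1N}^2\p{\L^n-\bar\L^{(n)}}^2$ and, on the other hand, the empirical variance over all $N$ particles equals $\frac{N-1}{N^2}\sum_{m\neq n}\p{\L^m-\bar\L^{(n)}}^2+\frac{N-1}{N^2}\p{\L^n-\bar\L^{(n)}}^2$ up to lower-order terms; comparing these two, $\p{\L^n-\L}^2=\Var_{\eta^N_{\tau_{n,k}^-}}(Q)+\frac1N\rho_{n,k}$ where $\rho_{n,k}$ is a bounded quantity. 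Summing $\sum_n\p{\L^n_t-\L_t}^2 d\calN^n_t/N$ then produces $\Var_{\eta^N_{t^-}}(Q)\,d\calN_t+\frac1N d\calR_t$ with $\calR_t\eqdef\sum_{0\le\tau_{n,k}\le t}\rho_{n,k}$, except that replacing the per-particle branching count $d\calN^n_t$ by the global $d\calN_t$ in the variance term introduces an extra error: $\frac1N\sum_n\Var_{\eta^N_{t^-}}(Q)\,d\calN^n_t$ is exactly $\Var_{\eta^N_{t^-}}(Q)\,d\calN_t$ (since $\Var$ doesn't depend on $n$), so in fact no such error appears and everything is clean. Collecting the bounded increments, $|\Delta\calR_t|$ is controlled by a constant times $\|\ph\|_\infty^2/N$; tracking the constants through the algebra gives the bound $14\|\ph\|_\infty^2/N$ in \eqref{lemab2}.

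It remains to produce the martingale $\tcalM_t$. The term $\Var_{\eta^N_{t^-}}(Q)\,d\calN_t$ is \emph{not} predictable, so the decomposition \eqref{lemab1} as written must actually be read differently: what is genuinely wanted is $d\b{\calM,\calM}_t-\Var_{\eta^N_{t^-}}(Q)\,d\calN_t-\frac1N d\calR_t=\frac1{\sqrt N}d\tcalM_t$ with $\tcalM$ a local martingale. Here I would argue that each summand $\p{\L^n_{\tau_{n,k}}-\L_{\tau_{n,k}}}^2-\Var_{\eta^N_{\tau_{n,k}^-}}(Q)-\frac1N\rho_{n,k}$ vanishes \emph{by construction} once $\rho_{n,k}$ is chosen to absorb the deterministic algebraic discrepancy — i.e. the identity is exact pathwise, so one may take $\tcalM\equiv 0$; the $\frac1{\sqrt N}\tcalM$ slot is kept in the statement only because a companion computation for $\b{\M,\M}$ genuinely needs it. Alternatively, if a nonzero $\tcalM$ is desired (absorbing the compensator so that the leading term becomes predictable), set $\tcalM_t\eqdef\sqrt N\int_0^t\Var_{\eta^N_{u^-}}(Q)\,d(\calN_u-\bar\calN_u)$ where $d\bar\calN_u$ is the predictable compensator of $d\calN_u$; this is a local martingale (local along $(\tau_j)$), piecewise constant with jumps at branching times, and its jumps are bounded by $\sqrt N\cdot\frac1N\cdot 4\|\ph\|_\infty^2$, well within the stated orders after the $1/\sqrt N$ prefactor. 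The main obstacle is the bookkeeping in the second paragraph: getting the $\frac1N$ versus $\frac1{N-1}$ weight comparisons and the expansion of the empirical variance right so that $\calR_t$ is genuinely $O(1/N)$-jump bounded with the explicit constant $14$ — this is where an off-by-a-factor error is easiest to make, and where I would be most careful.
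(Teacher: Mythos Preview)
Your derivation of \eqref{mrondcroch} is correct and matches the paper. The gap is in the second paragraph: the identity $(\L^n_{\tau_{n,k}}-\L_{\tau_{n,k}})^2=\Var_{\eta^N_{\tau_{n,k}^-}}(Q)+\tfrac1N\rho_{n,k}$ with bounded $\rho_{n,k}$ is false in general, and consequently $\tcalM\equiv 0$ cannot work. The point you are missing is that $\L^n_{\tau_{n,k}}$ is the \emph{post}-branching value of particle $n$, which equals $\L^m_{\tau_{n,k}^-}$ for the uniformly-chosen index $m\neq n$. Hence $(\L^n_{\tau_{n,k}}-\L_{\tau_{n,k}})^2=(1-\tfrac1N)^2(\L^m_{\tau_{n,k}^-}-\bar\L^{(n)})^2$ is genuinely random given $\calF_{\tau_{n,k}^-}$, with fluctuations of order $\|\ph\|_\infty^2$, not $1/N$. (Take all $\L^l_{\tau_{n,k}^-}=0$ for $l\neq n$ except one equal to $c$: the jump squared is either $\approx c^2$ or $\approx c^2/N^2$ depending on the draw, while $\Var_{\eta^N_{\tau_{n,k}^-}}(Q)\approx c^2/N$.) Your alternative based on compensating $d\calN_t$ is equally off target: the randomness to be extracted is that of the branching \emph{destination}, not of the branching \emph{time}.

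The paper's proof fixes this by conditioning on $\calF_{\tau_{n,k}^-}$: one sets
\[
\tcalM_t\eqdef \frac1{\sqrt N}\sum_{n,k}\Big((\calM^{n,k}_{\tau_{n,k}})^2-\E\big[(\calM^{n,k}_{\tau_{n,k}})^2\mid\calF_{\tau_{n,k}^-}\big]\Big)\un_{t\ge\tau_{n,k}},
\]
which is a local martingale by Lemma~\ref{albcios}, and then computes the conditional expectation explicitly as $(1-\tfrac1N)^2\Var_{\eta^{(n)}_{\tau_{n,k}^-}}(Q)$ using the uniform branching rule. The process $\calR$ then collects the difference between this conditional expectation and $\Var_{\eta^N_{\tau_{n,k}^-}}(Q)$, which \emph{is} a $\calF_{\tau_{n,k}^-}$-measurable quantity with jumps of order $1/N$ (coming only from the $1/N$ vs.\ $1/(N-1)$ weight mismatch), and the constant $14$ falls out of a total-variation comparison of the two empirical measures.
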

\begin{proof}
Considering the orthogonality property in Lemma \ref{Lem:quad}, and taking into account that the martingales $\calM^{n,k}$ are piecewise constant with a single jump at time $\tau_{n,k}$, we have
$$\b{\calM,\calM}_t
=\frac1N\sum_{n=1}^N\sum_{k=1}^{+\infty} \p{\calM^{n,k}_{\tau_{n,k}}}^2\un_{t\geq\tau_{n,k}}.$$
This implies (\ref{mrondcroch}) since $|\calM^{n,k}_{\tau_{n,k}}|= |\L^n_{\tau_{n,k}} - \L_{\tau_{n,k}}|\le 2\|\ph\|_\infty$.
This equation also implies that (\ref{lemab1}) holds true with
\begin{align*}
&\tcalM_t:=\frac{1}{\sqrt N}\sum_{n=1}^N\sum_{k=1}^{+\infty}\p{ \big(\calM^{n,k}_{\tau_{n,k}}\big)^2 
-  \E\left[\big(\calM^{n,k}_{\tau_{n,k}}\big)^2  \big| \calF_{\tau_{n,k}^-} \right] }\un_{t \geq \tau_{n,k}}\\
&\calR_t:= \sum_{n=1}^N\sum_{k=1}^{+\infty}\left(\E\left[\big(\calM^{n,k}_{\tau_{n,k}}\big)^2 \big| \calF_{\tau_{n,k}^-} \right] -
\Var_{\eta^N_{\tau_{n,k}^-}}(Q)\right)
\un_{t\geq\tau_{n,k}}.
 \end{align*}
 
On the one hand, Lemma \ref{albcios} ensures that  $\tcalM_t$ is a c\`adl\`ag local martingale. \medskip

On the other hand, by Assumption~\ref{ass:Dp}, we have $\L^l_{\tau_{n,k}}=\L^l_{\tau_{n,k}^-}$ for all $l\neq n$, so that~\eqref{aicj} becomes $\calM^{n,k}_{\tau_{n,k}} = p{1-\tfrac{1}{N}}\left(\L^n_{\tau_{n,k}}- \tfrac{1}{N-1} \sum_{l \neq n}  \L^l_{\tau_{n,k}^-}\right)$. Then, by construction of the branching rule, given $\calF_{\tau_{n,k}^-}$, 
$\L^n_{\tau_{n,k}}$ is uniformly drawn among the $(\L^m_{\tau_{n,k}})_{m\neq n}$, which yields
\begin{align*}
\E\left[\big(\calM^{n,k}_{\tau_{n,k}}\big)^2 \big| \calF_{\tau_{n,k}^-} \right]
&=\tfrac1{N-1}\sum_{m\ne n}\p{1-\tfrac{1}{N}}^2\left(\L^m_{\tau_{n,k}^-}- \tfrac{1}{N-1} \sum_{l \neq n}  \L^l_{\tau_{n,k}^-}\right)^2.
\end{align*}
If we temporarily denote the empirical distribution without particle $n$ by
\begin{align*}
\eta^{(n)}_{t} \eqdef \frac{1}{N-1}\sum_{m\neq n}\delta_{X^m_{t}},
\end{align*}
 we can now reformulate the latter using notation~\eqref{eq:not_var_emp} as
\begin{align*}
\E\left[\big(\calM^{n,k}_{\tau_{n,k}}\big)^2 \big| \calF_{\tau_{n,k}^-} \right]
&=\p{1-\tfrac{1}{N}}^2\ \Var_{\eta^{(n)}_{\tau^{-}_{n,k}}} \p{Q}.
\end{align*}
In other words, we have
$$\calR_t=\sum_{n=1}^N\sum_{k=1}^{+\infty}\left(\p{1-\tfrac{1}{N}}^2\ \Var_{\eta^{(n)}_{\tau^{-}_{n,k}}} \p{Q}-\Var_{\eta^N_{\tau_{n,k}^-}}(Q)\right)\un_{t\geq\tau_{n,k}}.$$

For the last statement, notice that for two probability measures $\mu$ and $\nu$ 
with total variation distance $\|\mu-\nu\|_{tv}$ and for any test function $f$,
\begin{align*}
 \abs{ \Var_{\mu} (f) - \Var_{\nu}(f)}
 \le|(\mu-\nu)(f^2)|+|(\mu-\nu)(f)(\mu+\nu)(f)|
\le 6\|\mu-\nu\|_{tv}\|f\|_\infty^2
\end{align*}
so that, for any $n$ and $k$,
\begin{align*}
 \abs{ \Delta \calR_{\tau_{n,k}}} 
&\le \p{1-\tfrac{1}{N}}^2\Big| \Var_{\eta^{(n)}_{\tau^{-}_{n,k}}} \p{Q} - \Var_{\eta^N_{\tau_{n,k}^-}}(Q) \Big| 
+\Big(1-\p{1-\tfrac{1}{N}}^2\Big)\Var_{\eta^N_{\tau_{n,k}^-}} \p{Q}\\
&\le 6\p{1-\tfrac{1}{N}}^2 \Big((N-1)(\tfrac1{N-1}-\tfrac1N)+\tfrac1N\Big)\|\ph\|_\infty^2
+\frac2N\|\ph\|_\infty^2\\
& \leq \frac{14 \norm{\ph}_{\infty}^2}{N}.\qedhere
\end{align*}
\end{proof}

\begin{Rem} A byproduct of the previous proof is the following equation, which will be useful in Definition~\ref{def:i} below.
\begin{equation}\label{laekcn}
\frac1N \calR_t+ \int_0^t\Var_{\eta^{N}_{s^-}}(Q) d\calN_s=\p{1-\tfrac{1}{N}}^2\frac1N \sum_{n=1}^N\int_0^t   \Var_{\eta^{(n)}_{s^-}}(Q) d\calN^n_s.
\end{equation} 
\end{Rem}

The next lemma is a very important step of the analysis. It relates the quadratic variation of the local martingale $t \mapsto \M_t$ - given, up to a martingale additive term, by the increasing process $t \mapsto \A_t$ defined in Lemma~\ref{lem:decomp} -, with the process $t \mapsto \gamma^N_t(Q^2)$. This will yield estimates on $\A_t$. Note that this idea is inspired by the fact that by definition of the quadratic variation, and for any Markov $X$, the process $t \mapsto \b{Q^{T-t}(\ph)(X_t)}^2$ equals the quadratic variation of the martingale $t \mapsto Q^{T-t}(\ph)(X_t)$ up to a martingale additive term.

\begin{Lem}\label{lemgQ2} 
  There exists a local martingale $(\tM_t)_{t \geq 0}$ such that
  \begin{align}\label{lemgQ21}
  d\gamma^N_t(Q^2) =  p_{t^-}^N d\A_t+ \frac{1}{\sqrt{N}} p_{t^-}^N  d\tM_t.
  \end{align}
  In particular, this implies that
  \begin{align}\label{major}
  \E\left[\int_0^t p_{s^-}^N d \A_s\right]= \E \b{ \gamma^N_t(Q^2) - \gamma^N_0(Q^2)} \leq \norm{\ph}_\infty^2.
  \end{align}
    Moreover, we have
  \begin{align}%
  \E\left[\int_0^t p_{u^-}^Nd[\tM,\tM]_u\right]\le  5\|\ph\|_\infty^4,  \label{lemgQ24}
  \end{align}
  as well as
  \begin{align}
  |\Delta\tM_u|\le  \frac{5\|\ph\|_\infty^2}{\sqrt{N}}.\label{lemgQ25}
  \end{align}
\end{Lem}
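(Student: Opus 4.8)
The plan is to mimic, at the level of the particle system, the elementary fact recalled just before the statement: for a single Markov particle, $\bigl(Q^{T-t}(\ph)(X_t)\bigr)^2$ equals the quadratic variation of the martingale $t\mapsto Q^{T-t}(\ph)(X_t)$ up to a martingale. Concretely, I would start from $\gamma^N_t(Q^2)=p^N_t\,\frac1N\sum_{n=1}^N(\L^n_t)^2$ and apply integration by parts exactly as in the proof of Lemma~\ref{lem:decomp}, using \eqref{eq:count_diff} to turn $dp^N_t$ into $-p^N_{t^-}\,d\calN_t$. This gives
\begin{align*}
d\gamma^N_t(Q^2)=p^N_{t^-}\Bigl(\frac1N\sum_{n=1}^N d(\L^n_t)^2-\Bigl(\frac1N\sum_{n=1}^N(\L^n_t)^2\Bigr)d\calN_t\Bigr).
\end{align*}
Then I would decompose $d(\L^n_t)^2$ over the evolution phases and the branchings of particle $n$, in parallel with \eqref{dmt}--\eqref{dcalm}. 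Between branchings, $\L^n_t$ is a martingale (the same martingale whose increments build $\M^{n,k}_t$), so by the single-particle identity $d(\L^n_t)^2 = d[\M^n,\M^n]_t + d(\text{martingale})$ on the evolution part; at a branching time $\tau_{n,k}$ the jump of $(\L^n)^2$ is $(\L^n_{\tau_{n,k}})^2-(\L^n_{\tau_{n,k}^-})^2$, which I will rewrite using the branching rule and the relation between $\A_t$ and the evolution martingales. Summing over $n$ and collecting the compensator-type terms should produce exactly $p^N_{t^-}\,d\A_t$, with everything left over being a local-martingale differential $\frac1{\sqrt N}p^N_{t^-}\,d\tM_t$; the fact that the relevant orthogonality and ``no simultaneous jumps'' properties hold is guaranteed by Lemma~\ref{Lem:quad} and Assumption~\ref{ass:Dp}.

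Once \eqref{lemgQ21} is established, \eqref{major} is immediate: $p^N_{t^-}d\tM_t$ integrates to a true martingale (it is a local martingale with bounded jumps, localized along $(\tau_j)$, and uniformly bounded since $|\gamma^N_t(Q^2)|\le\|\ph\|_\infty^2$ by Corollary~\ref{mzoecjleijf} and boundedness of $\L^n$), so taking expectations kills the $\tM$ term and leaves $\E[\int_0^t p^N_{s^-}d\A_s]=\E[\gamma^N_t(Q^2)-\gamma^N_0(Q^2)]$, which is bounded by $\|\ph\|_\infty^2$ because $0\le\gamma^N_t(Q^2)\le\|\ph\|_\infty^2\,p^N_t\le\|\ph\|_\infty^2$. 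For the jump bound \eqref{lemgQ25}, I would read off $\Delta\tM_u$ from the explicit decomposition: its jumps occur only at jump times of the $\L^n$ or at branching times, each such jump of $(\L^n)^2$, of $\A$, and of the associated predictable compensator is $O(\|\ph\|_\infty^2/N)$ (using $\Delta\A_t\le\|\ph\|_\infty^2/N$ from \eqref{ajump} and $|\L^n|\le\|\ph\|_\infty$), and the $\sqrt N$ rescaling in the definition of $\tM$ converts $O(\|\ph\|_\infty^2/N)$ into $O(\|\ph\|_\infty^2/\sqrt N)$; chasing the constants should give the stated $5$. Finally \eqref{lemgQ24}: writing $[\tM,\tM]_t=\sum_{u\le t}(\Delta\tM_u)^2$ (it is a purely discontinuous local martingale difference), I would bound $(\Delta\tM_u)^2$ at evolution-jump times and at branching times separately; the branching contribution is controlled by $\calN_t$ and the evolution contribution by $\A_t$ (hence by $\gamma^N(Q^2)$ via \eqref{major}), and combining these with \eqref{major} and the inequality $\E\bigl[\int_0^t p^N_{u^-}d\calN_u\bigr]\le$ const (coming from $d p^N=-p^N_{t^-}d\calN$, so $\int_0^t p^N_{u^-}d\calN_u=1-p^N_t\le 1$) yields a bound of the form $C\|\ph\|_\infty^4$ with $C=5$.

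The main obstacle I anticipate is the bookkeeping in the decomposition of $d(\L^n_t)^2$: one must carefully separate the genuinely predictable increasing part that assembles into $p^N_{t^-}d\A_t$ from the local-martingale remainder, handling the branching jumps of $(\L^n)^2$ correctly (they are \emph{not} captured by $\A$, which only sees the evolution martingales, so the branching jumps must go into $\tM$ together with their $\calF_{\tau_{n,k}^-}$-compensators) and making sure no cross terms between distinct particles survive — this is where Lemma~\ref{Lem:quad} and Assumption~\ref{ass:Dp}(ii)--(iii) are essential. The estimates \eqref{major}, \eqref{lemgQ24}, \eqref{lemgQ25} are then routine once the clean decomposition \eqref{lemgQ21} and the explicit form of $\tM$ are in hand; the only care needed is justifying that the various local martingales are true martingales, which follows as usual from the localization along $(\tau_j)_{j\ge1}$, Assumption~\ref{ass:E}, and the uniform bounds on the jumps together with $p^N_t\le1$.
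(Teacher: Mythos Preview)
Your overall strategy for~\eqref{lemgQ21} is the right one and matches the paper's: integrate by parts on $p^N_t\cdot\frac1N\sum_n(\L^n_t)^2$, turn $dp^N$ into $-p^N_{t^-}d\calN$, and then identify the $d\A$ part plus a local-martingale remainder. However, your description of the bookkeeping is slightly off: $\A_t=\frac1N\sum_n[\M^n,\M^n]_t$ \emph{does} jump at branching times, since $\Delta\M^n_{\tau_{n,k}}=-\L^n_{\tau_{n,k}^-}$. The clean way---what the paper does---is the global identity
\[
d(\L^n_t)^2-(\L^n_t)^2\,d\calN^n_t=d[\M^n,\M^n]_t+2\L^n_{t^-}\,d\M^n_t,
\]
valid at branching times as well (check it: both sides equal $-(\L^n_{\tau_{n,k}^-})^2$ there). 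Summing over $n$ then gives $d\gamma^N(Q^2)=p^N_{t^-}d\A_t+\frac{p^N_{t^-}}{\sqrt N}d\tM_t$ with
\[
d\tM_t=\frac1{\sqrt N}\sum_n\Bigl(J^n_t\,d\calN^n_t+2\L^n_{t^-}\,d\M^n_t\Bigr),\qquad J^n_t=(\L^n_t)^2-\tfrac1N\textstyle\sum_m(\L^m_t)^2,
\]
and the branching rule makes $t\mapsto\int_0^t J^n_s\,d\calN^n_s$ a local martingale. No separate ``compensator for branching jumps'' is needed.

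The real gap is in your plan for~\eqref{lemgQ24}. You write $[\tM,\tM]_t=\sum_{u\le t}(\Delta\tM_u)^2$ on the grounds that $\tM$ is ``a purely discontinuous local martingale difference''. This is false in general: $\tM$ contains $\int\L^n_{-}\,d\M^n$, and $\M^n$ has a continuous martingale part whenever the underlying process does (e.g.\ the elliptic diffusion with hard killing that motivates the paper). In that case $[\tM,\tM]$ has a nontrivial continuous part that your sum of squared jumps misses entirely, and the bound would not follow. The correct argument exploits the explicit form of $\tM$ and the orthogonality of the $2N$ constituent local martingales $\{\int\L^m_-d\M^m,\ \int J^n d\calN^n\}_{m,n}$ (same mechanism as Lemma~\ref{Lem:quad}); It\^o's isometry then gives
\[
\E\!\int_0^t p^N_{u^-}\,d[\tM,\tM]_u
=\frac1N\sum_n\E\!\int_0^t p^N_{u^-}\bigl((J^n_u)^2\,d\calN^n_u+4(\L^n_{u^-})^2\,d[\M^n,\M^n]_u\bigr),
\]
and the two pieces are bounded respectively by $\|\ph\|_\infty^4\,\E\!\int p^N_{-}d\calN\le\|\ph\|_\infty^4$ and $4\|\ph\|_\infty^2\,\E\!\int p^N_{-}d\A\le4\|\ph\|_\infty^4$ via~\eqref{major}. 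Your plans for~\eqref{major} and~\eqref{lemgQ25} are fine once the explicit $\tM$ above is in hand.
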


\begin{proof}
  Differentiating $\gamma^N_t(Q^2) \eqdef p_t^N \frac1N \sum_{n=1}^N (\L_t^n)^2$ yields
  $$
  d \gamma^N_t(Q^2) = \frac1N \sum_{n=1}^N  p^N_{t^-} d \p{ (\L_t^n)^2 }+(\L_{t}^n)^2 d p^N_t.
  $$
 Since $dp_t^N=-p_{t^-}^N d \calN_t$, one gets
  \begin{align}\label{eq:step_1}
  d \gamma^N_t(Q^2)
  &=\frac1N\sum_{n=1}^N  p^N_{t^-} \left(d \p{ (\L_t^n)^2 }-(\L_{t}^n)^2 d\calN_t\right).
    \end{align}
    
  Next we claim that
  \begin{equation}\label{eq:step_2}
   d(\L_t^n)^2 -  (\L_{t}^n\big)^2 d\calN^n_t = d[\M^n,\M^n]_t + 2\L^n_{t^-}d\M_t^n .
  \end{equation}
  First, know from~\eqref{dmt} that $d\M_t^n=d\L_t^n -\L_{t}^n d \calN^n_t$, so that we can calculate by bilinearity of the quadratic variation
  \begin{align*}
  d[\M^n,\M^n]_t
  &=d[\L^n,\L^n]_t+\big(\L_{t}^n\big)^2d\calN^n_t-2 d \Big [ \int \L^n d \calN^n,\L^n \Big ]_t\\
  &=d[\L^n,\L^n]_t+\big(\L_{t}^n\big)^2d\calN^n_t-2(\Delta \L^n_t)\, \L_{t}^n d\calN^n_t\\
  &=d[\L^n,\L^n]_t+\L_{t}^n\big(2\L_{t^-}^n-\L_{t}^n\big)d\calN^n_t.
  \end{align*}
  Then, using again~\eqref{dmt} through $ d\L_t^n = d\M_t^n +\L_{t}^n d \calN^n_t$, it yields
  \begin{align*}
  d(\L_t^n)^2
  &=2\L^n_{t^-}d\L_t^n+d[\L^n,\L^n]_t\\
  &=\Big(2\L^n_{t^-}d\M_t^n +2\L^n_{t^-}\L_{t}^nd\calN^n_t\Big)+\Big(d[\M^n,\M^n]_t
  -\L_{t}^n\big(2\L_{t^-}^n-\L_{t}^n\big)d\calN^n_t\Big),%
  \end{align*}
  which immediately simplifies into~\eqref{eq:step_2}. \medskip
  
  Putting~\eqref{eq:step_1} and~\eqref{eq:step_2} together, considering in Lemma \ref{Lem:quad} the definition $  \A \eqdef \frac1N \sum_n [\M^n,\M^n]$, and recalling that $\calN \eqdef \frac1N \sum_n \calN^n$, we obtain 
  \begin{align*}
 d \gamma^N_t(Q^2)&=p_{t^-}^N d\A_t+\frac{p_{t^-}^N}{N}\sum_{n=1}^N \b{ (\L_{t}^n)^2  (d\calN_t^n-d\calN_t)+2\L^n_{t^-}d\M_t^n }\\
  &=p_{t^-}^N d\A_t+\frac{p_{t^-}^N}{N}\sum_{n=1}^N \b{ \Big((\L_{t}^n)^2-\frac1N\sum_{m=1}^N(\L_{t}^m)^2\Big)d\calN_t^n
  +2\L_{t^-}^nd\M_t^n },
  \end{align*}
  and we see that \eqref{lemgQ21} is satisfied with
  \begin{align}\label{beq}
  d\tM_t %
  &= \frac{1}{\sqrt{N}}\sum_{n=1}^N J^n_t d\calN_t^n
  +2\L_{t^-}^nd\M_t^n ,
  \end{align}
where we have defined 
$$J^n_t \eqdef \p{\L_{t}^n}^2-\frac1N\sum_{m=1}^N(\L_{t}^m)^2 = (1-1/N) \p{ \p{\L_{t}^n}^2 -\frac{1}{N-1}\sum_{m \neq n }(\L_{t}^m)^2}.$$
Note that, in the same fashion as $(\calM^n_t)_{t \geq 0}$, $ \p{ \int_0^t J^n_s d \calN_s^n }_{t \geq 0}$ is a local martingale since, for each $k\geq 1$, $ t \mapsto J^n_t \un_{t \geq \tau_{n,k}}$ is a bounded martingale by definition of the branching rule and Lemma~\ref{albcios}. Moreover, in the same way as in Lemma~\ref{lem:decomp}, the $2N$ local martingales 
$$\set{\p{\int_0^t\L_{s^-}^m d \M_s^m }_{t \geq 0},\p{ \int_0^t J^n_s d \calN_s^n }_{t \geq 0} }_{1\leq n, m \leq N}$$ are all orthogonal to each other. Indeed, for any pair $n,m$, (i) $ [\M^n,\M^m]$ is a martingale by Lemma~\ref{lem:decomp}, (ii) $[\calM^n,\M^m]=0$ and $[\calM^n,\calM^m]=0$ if $n \neq m$ by Assumption~\ref{ass:Dp}. The only new point to check is that the quadratic covariation 
$$d \b{\int \L_{s^-}^n d \M_s^n ,\int J^n_s d \calN_s^n  }_{t} = - \p{\L_{t^-}^n}^2 J^n_t d \calN_t^n$$ 
is indeed a local martingale, which is a consequence of the branching rule implying $\E\b{J^n_{\tau_{n,k}} | \calF_{\tau_{n,k}^-}}=0$ and Lemma~\ref{albcios}.\medskip  
  
To establish~\eqref{lemgQ24} and~\eqref{lemgQ25}, we recall that for any $1\leq n\leq N$ $\sup_{t\geq 0}|J_t^n|\leq\|\ph\|_\infty^2$, $\sup_{t\geq0}|\L_{t^-}^n|\leq\|\ph\|_\infty$, and $\sup_{t\geq 0}|\Delta\M_t^n|\leq 2\|\ph\|_\infty$.\medskip

For~\eqref{lemgQ24}, we apply Itô's isometry to~\eqref{beq} and use orthogonality to get
  \begin{align*}
  \E\int_0^t p_{u^-}^Nd[\tM,\tM]_u & = \frac{1}{N}\sum_{n=1}^N\E \b{ \int_0^u p_{u^-}^N \p{J^n_u}^2 d\calN_u^n  + 4 \int_0^t p_{u^-}^N (\L_{t^-}^n)^2d[\M^n,\M^n]_u} \\
  &\le \|\ph\|_\infty^4 \E\left[\int_0^t p_{u^-}^Nd\calN_u\right] + 4\|\ph\|_\infty^2 \E\left[\int_0^t p_{u^-}^Nd\A_u\right]\\
  &\le\|\ph\|_\infty^4\frac1N \sum_{j=1}^\infty\big(1-\tfrac1N\big)^{j-1} +  4\|\ph\|_\infty^2 \E\left[\gamma_t^N(Q^2)-\gamma_0^N(Q^2)\right]\\\
  &\le 5\|\ph\|_\infty^4 .
  \end{align*}
  
  In order to obtain~\eqref{lemgQ25}, consider (\ref{beq}) and recall from Assumption~\ref{ass:Dp} that, for $n\neq m$, $\Delta\calN_t^n\Delta\calN_t^m=0$ and $\Delta\M_t^n\Delta\M_t^m=0$. We then deduce that
 $$|\Delta\tM_t|\le \frac{1}{\sqrt{N}}\left(\sup_{t\geq 0}|J_t^n|+2\sup_{t\geq0}|\L_{t^-}^n|\sup_{t\geq 0}|\Delta\M_t^n|\right)\le \frac{5\|\ph\|_\infty^2}{\sqrt{N}}.$$ 
 \qedhere

 \end{proof}

\subsection{$\L^2$-estimate}

The convergence of $\gamma^N_T(\ph)$  to $\gamma_T(\ph)$ when $N$ goes to infinity is now a direct consequence of the previous results. This kind of estimate was already noticed by Villemonais in~\cite{v14}.

\begin{Pro}\label{pro:estimate}
For any $\ph \in {\calD}$, we have
\begin{align*}
\E \b{ \p{ \gamma^N_T(\ph) - \gamma_T(\ph) }^2 } \leq \frac{6 \norm{\ph}_\infty^2}{N}.
\end{align*}
\end{Pro}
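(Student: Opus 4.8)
The plan is to exploit the martingale decomposition already established. Starting from the identity
$$\gamma_T^N(\ph)-\gamma_T(\ph)=\Big(\gamma_T^N(Q)-\gamma^N_0(Q)\Big)+\Big(\eta^N_0(Q^T(\ph))-\eta_0(Q^T(\ph))\Big),$$
I would bound the $\L^2$ norm of each term separately, using $(a+b)^2\le 2a^2+2b^2$. For the second term, since $X_0^1,\dots,X_0^N$ are i.i.d.~with law $\eta_0$ and $Q^T(\ph)$ is bounded by $\norm{\ph}_\infty$ (recall $Q^T\ph(\partial)=0$), this is just the variance of an empirical mean of $N$ i.i.d.~bounded random variables, hence $\E[(\eta^N_0(Q^T(\ph))-\eta_0(Q^T(\ph)))^2]=\frac1N\V_{\eta_0}(Q^T(\ph))\le\frac{\norm{\ph}_\infty^2}{N}$.

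For the first term, Lemma~\ref{lem:decomp} gives
$$\gamma_T^N(Q)-\gamma_0^N(Q)=\frac{1}{\sqrt N}\int_0^T p^N_{u^-}\p{d\M_u+d\calM_u},$$
so by the It\^o isometry (applicable after localization along $(\tau_j)_{j\ge1}$, then passing to the limit using Assumption~\ref{ass:E} and the uniform bound $|\gamma_t^N(Q)|\le\norm{\ph}_\infty$ from Corollary~\ref{mzoecjleijf} and a dominated-convergence / Fatou argument) and the orthogonality of $\M$ and $\calM$ from Lemma~\ref{Lem:quad},
$$\E\b{\p{\gamma_T^N(Q)-\gamma_0^N(Q)}^2}=\frac1N\E\b{\int_0^T (p^N_{u^-})^2\, d[\M,\M]_u}+\frac1N\E\b{\int_0^T (p^N_{u^-})^2\, d[\calM,\calM]_u}.$$
Since $0\le p^N_{u^-}\le 1$, the first integral is controlled by $\E[[\M,\M]_T]=\E[\A_T]$ after using that $[\M,\M]_t-\A_t$ is a local martingale (Lemma~\ref{Lem:quad}); more precisely $\E\b{\int_0^T (p^N_{u^-})^2 d[\M,\M]_u}\le\E\b{\int_0^T p^N_{u^-}\,d\A_u}\le\norm{\ph}_\infty^2$ by~\eqref{major} in Lemma~\ref{lemgQ2} (the extra factor $p^N_{u^-}\le 1$ only helps). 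For the second integral, \eqref{mrondcroch} gives $d[\calM,\calM]_u\le 4\norm{\ph}_\infty^2 d\calN_u$, hence $\E\b{\int_0^T (p^N_{u^-})^2 d[\calM,\calM]_u}\le 4\norm{\ph}_\infty^2\,\E\b{\int_0^T p^N_{u^-} d\calN_u}\le 4\norm{\ph}_\infty^2\cdot\frac1N\sum_{j\ge1}(1-\tfrac1N)^{j-1}=4\norm{\ph}_\infty^2$, using $dp^N_u=-p^N_{u^-}d\calN_u$ and $p^N_T\ge0$, or equivalently the telescoping bound on $\sum (1-\tfrac1N)^{j-1}\cdot\tfrac1N$. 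Collecting, $\E\b{(\gamma_T^N(Q)-\gamma_0^N(Q))^2}\le\frac{5\norm{\ph}_\infty^2}{N}$.

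Combining the two bounds through $(a+b)^2\le 2a^2+2b^2$ would give a constant worse than $6$, so instead I would keep the cross term: writing $A\eqdef\gamma_T^N(Q)-\gamma_0^N(Q)$ and $B\eqdef\eta_0^N(Q^T\ph)-\eta_0(Q^T\ph)$, note that $\E[B]=0$ and in fact, conditionally on $\calF_0$, the martingale part $A$ has conditional mean zero, so $\E[AB]=\E[B\,\E[A|\calF_0]]=0$; therefore $\E[(A+B)^2]=\E[A^2]+\E[B^2]\le\frac{5\norm{\ph}_\infty^2}{N}+\frac{\norm{\ph}_\infty^2}{N}=\frac{6\norm{\ph}_\infty^2}{N}$, which is the claim. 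The main obstacle is the rigorous justification of the It\^o isometry and the passage from the localized martingales to the global statement at time $T$: one must check integrability of the quadratic variations (guaranteed by the explicit bounds in Lemmas~\ref{Lem:quad}, \ref{lemab}, \ref{lemgQ2}) and that the local martingales $\int_0^{\cdot} p^N_{u^-}(d\M_u+d\calM_u)$ are genuine $\L^2$-martingales on $[0,T]$, which follows since their predictable quadratic variations have finite expectation; everything else is a bookkeeping of the uniform-in-$N$ constants.
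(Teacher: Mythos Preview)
Your proposal is correct and follows essentially the same approach as the paper's proof. Both use the decomposition from Lemma~\ref{lem:decomp} into the initial-condition term plus the stochastic integrals against $\M$ and $\calM$, bound the three contributions by $\norm{\ph}_\infty^2$, $\norm{\ph}_\infty^2$ (via~\eqref{major}), and $4\norm{\ph}_\infty^2$ (via~\eqref{mrondcroch}) respectively, and then sum using orthogonality to reach the constant $6$; your explicit argument that $\E[AB]=0$ via $\E[A\mid\calF_0]=0$ is exactly what the paper means by ``orthogonal decomposition'', and your remarks on localization make explicit what the paper leaves implicit.
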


\begin{proof} 
Thanks to Lemma \ref{lem:decomp} and the fact that $\gamma_T(\ph)=\gamma_0( Q^T  \ph)$, we have the orthogonal decomposition
$$\gamma^N_T(\ph) - \gamma_T(\ph)  =\frac{1}{\sqrt{N}} \int_0^T p^N_{t^-}\ d\M_t + \frac{1}{\sqrt{N}} \int_0^T p^N_{t^-}\ d\calM_t +  \gamma^N_0 (Q^T\ph) - \gamma_0( Q^T  \ph),$$
and it is easy to upper-bound the individual contribution of each term to the total variance.

(i)~Initial condition. Since $\gamma_0=\eta_0$ and $\gamma_0^N=\eta_0^N$, we have
$$\E\b{ \p{\gamma^N_0 (Q^T\ph) - \gamma_0 (Q^T  \ph)}^2  }=\tfrac1N\Var_{\eta_0}(Q^T(\ph)(X))
\leq \tfrac1N\|Q^T(\ph)\|_\infty^2\leq \tfrac1N\|\ph\|_\infty^2.$$

(ii)~$\calM$-terms. Using Itô's isometry and (\ref{mrondcroch}), we obtain
\begin{align*}
\E\left[\left(\int_0^T p_{t^-}^N d\calM_t\right)^2\right]
&=\E\left[\int_0^T \big(p_{t^-}^N\big)^2 d[\calM,\calM]_t\right]\\
&\leq4\|\ph\|_\infty^2 \frac1N \sum_{j=1}^{\infty} \p{1-\tfrac{1}{N}}^{2(j-1)}\leq 4\|\ph\|_\infty^2. 
\end{align*}

(iii)~$\M$-terms. In the same way, applying Itô's isometry and (\ref{lemgQ21}), we get
\begin{align*}
\E\left[\left(\int_0^T p_{t^-}^N d\M_t\right)^2\right]
&=\E\left[\int_0^T \big(p_{t^-}^N\big)^2 d[\M,\M]_t\right]\\
&\le\E\left[\int_0^T p_{t^-}^N d\A_t\right]=\E\left[\gamma_T^N(Q^2)\right]\leq \|\ph\|_\infty^2.
\end{align*}
\end{proof}

In particular, Proposition~\ref{pro:estimate} implies that for any $\ph$ in ${\cal D}$, $\gamma^N_t(\ph)$ converges in probability to $\gamma_t(\ph)$ when $N$ goes to infinity. Since we have assumed that $\one_F$ belongs to ${\cal D}$, the probability estimate $p_t^N$ goes to its deterministic target $p_t$ in probability. The next subsection provides a stronger result.

\subsection{Time uniform estimate for $p_t$}

In this section, we prove the convergence of $\sup_{t \in [0,T]} \abs{p^N_t - p_t}$ to $0$ in probability by using the time marginal convergence of Proposition~\ref{pro:estimate}. Recall that, by Assumption~\ref{ass:D} or~\ref{ass:Dp}, the mapping $t\mapsto p_t$ is continuous (Lemma~\ref{lem:pcont}. Hence, the proof only uses this argument and the monotonicity of $t \mapsto p^N_t$. One can merely see it as a Dini-like result.

\begin{Lem}\label{lzich}
One has
$$\sup_{t \in [0,T]} \abs{p^N_t - p_t}\xrightarrow[N\to\infty]{\P}0.$$ 
\end{Lem}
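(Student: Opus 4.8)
The plan is to exploit two facts: the pointwise (in $t$) convergence in probability $p^N_t \to p_t$ that follows from Proposition~\ref{pro:estimate} (with $\ph = \un_F$), and the monotonicity of both $t \mapsto p^N_t$ and $t \mapsto p_t$, the latter being moreover \emph{continuous} on the compact interval $[0,T]$ by Lemma~\ref{lem:pcont}. This is exactly the setting of a Dini-type argument: a pointwise limit of monotone functions towards a continuous monotone limit on a compact set upgrades to uniform convergence. The only subtlety is that here everything is random, so the classical Dini argument has to be run ``in probability'' on a deterministic finite grid.

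Concretely, I would first fix $\eps > 0$ and use the uniform continuity of $t \mapsto p_t$ on $[0,T]$ to choose a finite deterministic subdivision $0 = t_0 < t_1 < \cdots < t_m = T$ such that $p_{t_{i-1}} - p_{t_i} \leq \eps$ for every $i$. Then, for any $t \in [0,T]$, say $t \in [t_{i-1}, t_i]$, monotonicity of $p^N$ and of $p$ gives the two-sided sandwich
\begin{align*}
p^N_{t_i} - p_{t_{i-1}} \leq p^N_t - p_t \leq p^N_{t_{i-1}} - p_{t_i},
\end{align*}
and writing $p^N_{t_i} - p_{t_{i-1}} = (p^N_{t_i} - p_{t_i}) - (p_{t_{i-1}} - p_{t_i}) \geq (p^N_{t_i} - p_{t_i}) - \eps$ together with the analogous bound from above, one deduces
\begin{align*}
\sup_{t \in [0,T]} \abs{p^N_t - p_t} \leq \max_{0 \leq i \leq m} \abs{p^N_{t_i} - p_{t_i}} + \eps.
\end{align*}

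Once this deterministic inequality is in hand, the conclusion is immediate: the right-hand maximum is over a \emph{finite} set of times, and for each fixed $t_i$ we know $p^N_{t_i} \to p_{t_i}$ in probability (Proposition~\ref{pro:estimate} applied to $\ph = \un_F$, since $\gamma^N_{t_i}(\un_F) = p^N_{t_i}$ and $\gamma_{t_i}(\un_F) = p_{t_i}$, recalling $\un_F \in \calD$). A finite union bound then yields $\max_{0 \leq i \leq m} \abs{p^N_{t_i} - p_{t_i}} \to 0$ in probability, hence $\limsup_N \P\p{\sup_{t \in [0,T]} \abs{p^N_t - p_t} > 2\eps} = 0$, and since $\eps$ was arbitrary this proves the claim.

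There is no real obstacle here; the result is essentially a soft consequence of monotonicity plus continuity of the limit. The only point requiring a little care is the direction of the monotonicity inequalities (both $p^N$ and $p$ are non-increasing, so on each subinterval the extreme values at the endpoints bracket the interior values in a way that must be combined correctly), and making sure the grid is deterministic so that the pointwise-in-probability convergence can be applied termwise and combined by a finite union bound. If one wanted, one could alternatively phrase the argument via convergence in $L^1$ or even almost surely along a subsequence, but running it directly in probability on the finite grid is the cleanest route.
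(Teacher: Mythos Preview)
Your proposal is correct and follows essentially the same Dini-type argument as the paper: both use the uniform continuity of $t\mapsto p_t$ on $[0,T]$ to build a finite deterministic grid, the monotonicity of $t\mapsto p^N_t$ to sandwich $|p^N_t-p_t|$ by endpoint errors plus $\eps$, and Proposition~\ref{pro:estimate} with $\ph=\un_F$ together with a finite union bound to conclude. The only cosmetic difference is that the paper bounds $|p^N_t-p_t|$ via $\max(|p^N_{t_{j-1}}-p_t|,|p_t-p^N_{t_j}|)$ directly, whereas you write out the two-sided sandwich $p^N_{t_i}-p_{t_{i-1}}\leq p^N_t-p_t\leq p^N_{t_{i-1}}-p_{t_i}$; the content is identical.
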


\begin{proof} 
Since the mapping $t\mapsto p_t$ is continuous on $[0,T]$ by Lemma~\ref{lem:pcont}, it is uniformly continuous. Hence, for any $\varepsilon>0$, there exists a subdivision $\{t_0=0<t_1<\dots<t_J=T\}$ such that, for any $1\leq j\leq J$ and any $t$ in $[t_{j-1},t_j]$, one has 
$$\max(|p_t-p_{t_{j-1}}|,|p_t-p_{t_{j}}|)\leq\varepsilon.$$
Hence, since $t\mapsto p_t^N$ is decreasing, it is readily seen that
$$|p_t^N-p_t|\leq\max(\abs{p_{t_{j-1}}^N-p_t},\abs{p_t-p_{t_{j}}^N})\leq\varepsilon+\max(|p_{t_{j-1}}^N-p_{t_{j-1}}|,|p_{t_{j}}^N-p_{t_{j}}|).$$
Consequently, with probability $1$, uniformly in $t\in[0,T]$, we get
$$|p_t^N-p_t|\leq\varepsilon+\max_{0\leq j\leq J}|p_{t_{j}}^N-p_{t_{j}}|.$$
Taking $\ph=\un_F$ and $T=t_j$ in Proposition \ref{pro:estimate} ensures that
$$\max_{0\leq j\leq J}|p_{t_{j}}^N-p_{t_{j}}|\xrightarrow[N\to\infty]{\P}0.$$
Therefore, %
we have $\P(\sup_{t \in [0,T]}|p_t^N-p_t|>2\varepsilon)\leq\P(\max_{0\leq j\leq J}|p_{t_{j}}^N-p_{t_{j}}|>\varepsilon)\rightarrow 0$ when $N \to +\infty$.
Since $\varepsilon$ is arbitrary, we get the desired result.
\end{proof}

\subsection{Approximation of the quadratic variation}

As will become clear later, the following process represents a useful approximation 
of $N\b{\gamma^N(Q),\gamma^N(Q)}_t$. 
\begin{Def}\label{def:i} 
 For each $\ph \in \calD$ and $T > 0$ given, we define for $t \in [0,T]$ the \cadlag increasing process
 \begin{align}\label{indef}
i^N_t \eqdef \int_0^t\p{p^N_{u^-}}^2 d\A_u-\int_0^t\Var_{ \eta^{N}_{u^-}}(Q)p^N_{u^-}dp^N_u+\frac1N  \int_0^t \big(p^N_{u^-}\big)^2 d \calR_u .
\end{align}
\end{Def}
The fact that this process is increasing comes from (\ref{laekcn}) and $dp_t^N=-p_{t^-}^N d \calN_t$, which yields the alternative formulation
$$ - \Var_{ \eta^{N}_{t^-}}(Q)p^N_{t^-}dp^N_t + \frac1N  \big(p^N_{t^-}\big)^2 d \calR_t  =  
 \big(p^N_{t^-}\big)^2 \frac{(1-1/N)^2}{N} \sum_{n=1}^N \Var_{ \eta^{(n)}_{t^-}}(Q) d \calN^n_t $$
 where the empirical distribution without particle $n$ is denoted by
$\eta^{(n)}_{t} \eqdef \frac{1}{N-1}\sum_{m\neq n}\delta_{X^m_{t}}$. \medskip

The estimation of $i^N_t$ is in fact easier than the estimation of 
$N\b{\gamma^N(Q),\gamma^N(Q)}_t$ and these two increasing processes are equal up to a martingale term.
\begin{Lem}\label{lem:i_mart}
The process $ t \mapsto N \b{\gamma^N(Q) , \gamma^N(Q)}_t - i^N_t $
is a local martingale.
\end{Lem}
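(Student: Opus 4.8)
The goal is to show that $N\b{\gamma^N(Q),\gamma^N(Q)}_t - i^N_t$ is a local martingale, where by Lemma~\ref{lem:decomp} we have $\gamma^N_t(Q) = \gamma^N_0(Q) + \frac{1}{\sqrt N}\int_0^t p^N_{u^-}\p{d\M_u + d\calM_u}$. The plan is to compute the quadratic variation of this stochastic integral explicitly, identify the compensator-type increasing part, and check that the discrepancy with $i^N_t$ is exactly a sum of local martingale increments already identified in Lemmas~\ref{Lem:quad}, \ref{lemab} and \ref{lemgQ2}.

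\emph{Step 1: Expand the quadratic variation.} Since $\gamma^N_t(Q) - \gamma^N_0(Q) = \frac{1}{\sqrt N}\int_0^t p^N_{u^-}\p{d\M_u + d\calM_u}$, bilinearity of the quadratic variation gives
$$N\b{\gamma^N(Q),\gamma^N(Q)}_t = \int_0^t \p{p^N_{u^-}}^2 \p{d\b{\M,\M}_u + 2\,d\b{\M,\calM}_u + d\b{\calM,\calM}_u}.$$
\emph{Step 2: Replace $\b{\M,\M}$ by $\A$.} By Lemma~\ref{Lem:quad}, $\b{\M,\M}_t - \A_t$ is a local martingale; since $p^N_{u^-}$ is bounded and predictable, $\int_0^t \p{p^N_{u^-}}^2 \p{d\b{\M,\M}_u - d\A_u}$ is again a local martingale (here one uses Assumption~\ref{ass:E} so that the localizing sequence $(\tau_j)$ works). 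Hence modulo a local martingale we may replace the first term by $\int_0^t \p{p^N_{u^-}}^2 d\A_u$. \emph{Step 3: Kill the cross term.} Again by Lemma~\ref{Lem:quad}, $\b{\M,\calM}_t$ is itself a local martingale, so $\int_0^t \p{p^N_{u^-}}^2 d\b{\M,\calM}_u$ is a local martingale and contributes nothing. \emph{Step 4: Handle $\b{\calM,\calM}$.} By \eqref{lemab1} in Lemma~\ref{lemab}, $d\b{\calM,\calM}_u = \Var_{\eta^N_{u^-}}(Q)\,d\calN_u + \frac1N d\calR_u + \frac1{\sqrt N}d\tcalM_u$, with $\tcalM$ a local martingale; again the $\frac1{\sqrt N}\int \p{p^N_{u^-}}^2 d\tcalM_u$ piece is a local martingale and drops out. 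Using $dp^N_u = -p^N_{u^-}d\calN_u$ from \eqref{eq:count_diff}, the surviving part becomes $-\int_0^t \Var_{\eta^N_{u^-}}(Q)\, p^N_{u^-}\,dp^N_u + \frac1N\int_0^t \p{p^N_{u^-}}^2 d\calR_u$, which is precisely the last two terms of $i^N_t$ in \eqref{indef}.

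\emph{Step 5: Assemble.} Combining Steps 1--4, $N\b{\gamma^N(Q),\gamma^N(Q)}_t$ equals $\int_0^t \p{p^N_{u^-}}^2 d\A_u - \int_0^t \Var_{\eta^N_{u^-}}(Q)\,p^N_{u^-}\,dp^N_u + \frac1N\int_0^t \p{p^N_{u^-}}^2 d\calR_u$ plus a local martingale, and the first expression is exactly $i^N_t$ by Definition~\ref{def:i}. This proves the claim. The main technical point to be careful about — and the only genuine obstacle — is justifying that the stochastic integrals $\int_0^t \p{p^N_{u^-}}^2\,dM_u$ of the various local martingales $M \in \{\b{\M,\M}-\A,\ \b{\M,\calM},\ \tcalM\}$ against the bounded predictable integrand $\p{p^N_{u^-}}^2$ remain local martingales; this follows from the standard stability of the local-martingale class under integration of bounded predictable processes, together with the fact (Remark following Lemma~\ref{lem:decomp}, Assumption~\ref{ass:E}) that $(\tau_j)_{j\geq 1}$ with $\lim_j \tau_j > T$ a.s.\ provides a common localizing sequence.
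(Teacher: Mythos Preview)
Your proof is correct and follows essentially the same route as the paper: expand $N[\gamma^N(Q),\gamma^N(Q)]$ via the decomposition of Lemma~\ref{lem:decomp}, use Lemma~\ref{Lem:quad} to replace $[\M,\M]$ by $\A$ and to discard the cross term $[\M,\calM]$, then apply~\eqref{lemab1} and $dp^N_u=-p^N_{u^-}d\calN_u$ to recover $i^N_t$. The paper's version is simply a terser statement of the same argument (and does not invoke Lemma~\ref{lemgQ2}, which you mention in your plan but, correctly, never actually use).
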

\begin{proof}
From (\ref{eq:decomp}) and Lemma \ref{Lem:quad}, we know that
\begin{align}
 N \b{\gamma^N(Q) , \gamma^N(Q)}_t - \int_0^t \big(p^N_{u^-}\big)^2  d\A_u  
 - \int_0^t \big(p^N_{u^-}\big)^2  d \b{\calM,\calM}_u  \label{paiozjc}
\end{align}
is a local martingale. The result is then a direct consequence of~\eqref{lemab1}.
\end{proof}

The next step is just a reformulation of $i_t^N$ through an integration by parts.

\begin{Lem}\label{oaichmp}
 The increasing process $i^N_t$ can be decomposed as 
\begin{align*}
 i^N_t =&\ p^N_t \gamma_t^N\p{Q^2} - \gamma_0^N\p{Q^2} + \b{\gamma_t^N\p{Q}}^2\ln p^N_t  -  2 \int_0^t \gamma^N_{u^-}(Q^2) dp^N_u\\
 &+ \m_t^N + \ell^N_t + O\p{\frac{1}{N}},
\end{align*}
where
\begin{align*}
\m_t^N \eqdef -\frac{1}{\sqrt{N}}\int_0^t  \p{p^N_{u^-}}^2 d\tM_u
\end{align*}
is a local martingale, and
\begin{align*}
\ell^N_t \eqdef - \int_0^t \ln p^N_{u^-} d\p{\gamma_{u}^N\p{Q} }^2.
\end{align*}
\end{Lem}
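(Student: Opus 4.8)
The plan is to derive the stated decomposition of $i_t^N$ by starting from the alternative formulation in Definition~\ref{def:i}, namely
\[
i_t^N = \int_0^t \p{p^N_{u^-}}^2 d\A_u - \int_0^t \Var_{\eta^N_{u^-}}(Q)\, p^N_{u^-}\, dp^N_u + \frac1N \int_0^t \p{p^N_{u^-}}^2 d\calR_u,
\]
and treating the three terms in turn, with the goal of recognizing each piece either as an exact differential (to be handled by integration by parts), as a local martingale, or as an $O(1/N)$ remainder. First I would attack the leading term $\int_0^t \p{p^N_{u^-}}^2 d\A_u$. By Lemma~\ref{lemgQ2}, $d\gamma_u^N(Q^2) = p^N_{u^-} d\A_u + \frac{1}{\sqrt N} p^N_{u^-} d\tM_u$, hence $p^N_{u^-} d\A_u = d\gamma_u^N(Q^2) - \frac{1}{\sqrt N} p^N_{u^-} d\tM_u$, so that
\[
\int_0^t \p{p^N_{u^-}}^2 d\A_u = \int_0^t p^N_{u^-}\, d\gamma_u^N(Q^2) + \m_t^N,
\]
with $\m_t^N = -\frac{1}{\sqrt N}\int_0^t \p{p^N_{u^-}}^2 d\tM_u$ a local martingale (it is a stochastic integral of a bounded predictable process against a local martingale, locally bounded along the $\tau_j$). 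Then I would apply integration by parts to $\int_0^t p^N_{u^-} d\gamma_u^N(Q^2)$: since $p^N\gamma^N(Q^2)$ and $p^N$ are semimartingales with $p^N$ piecewise constant, $d\p{p^N_u \gamma_u^N(Q^2)} = p^N_{u^-} d\gamma_u^N(Q^2) + \gamma_{u^-}^N(Q^2) dp^N_u$ (the bracket $[p^N,\gamma^N(Q^2)]$ must be tracked, but its jumps are $O(1/N^2)$ after integration of $\p{p^N_{u^-}}$, contributing to the $O(1/N)$ term — this needs care). This yields $\int_0^t p^N_{u^-} d\gamma_u^N(Q^2) = p^N_t \gamma_t^N(Q^2) - \gamma_0^N(Q^2) - \int_0^t \gamma_{u^-}^N(Q^2) dp^N_u + O(1/N)$.

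Next I would handle the remaining two terms of $i_t^N$ together, using equation~\eqref{laekcn} from the Remark after Lemma~\ref{lemab}, which gives precisely
\[
-\Var_{\eta^N_{u^-}}(Q) p^N_{u^-} dp^N_u + \frac1N \p{p^N_{u^-}}^2 d\calR_u = \p{p^N_{u^-}}^2 \frac{(1-1/N)^2}{N}\sum_n \Var_{\eta^{(n)}_{u^-}}(Q)\, d\calN^n_u,
\]
but more directly I would rewrite $-\Var_{\eta^N_{u^-}}(Q) p^N_{u^-} dp^N_u$ using $dp^N_u = -p^N_{u^-} d\calN_u$ as $\Var_{\eta^N_{u^-}}(Q) \p{p^N_{u^-}}^2 d\calN_u$, and recall from Lemma~\ref{lemab} (eq.~\eqref{lemab1}) that $d\b{\calM,\calM}_u = \Var_{\eta^N_{u^-}}(Q) d\calN_u + \frac1N d\calR_u + \frac{1}{\sqrt N} d\tcalM_u$. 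The point is to connect the sum $-\int_0^t \Var_{\eta^N_{u^-}}(Q) p^N_{u^-} dp^N_u + \frac1N\int_0^t \p{p^N_{u^-}}^2 d\calR_u$ with the quantity $\b{\gamma_t^N(Q)}^2 \ln p^N_t - \ell_t^N + (\text{martingale})$. For this I would use the fundamental relation $\gamma_t^N(Q) = p_t^N \L_t$ together with the elementary identity $N[\gamma^N(Q),\gamma^N(Q)]_t$ being expressed via $d\b{\gamma_t^N(Q)}^2 = 2\gamma_{t^-}^N(Q) d\gamma_t^N(Q) + d[\gamma^N(Q),\gamma^N(Q)]_t$, and integrate $-\ln p^N_{u^-}$ against this, so that $\ell_t^N = -\int_0^t \ln p^N_{u^-} d\p{\gamma_u^N(Q)}^2$ appears naturally after an integration by parts that produces the boundary term $\b{\gamma_t^N(Q)}^2 \ln p^N_t$ (using $\ln p^N_0 = 0$ since $p^N_0=1$) plus a $\int \p{\gamma_{u^-}^N(Q)}^2 d\ln p^N_u$ term which, via $d\ln p^N_u = N\ln(1-1/N) d\calN_u = -(1 + O(1/N)) d\calN_u$, matches $-\int \p{\gamma_{u^-}^N(Q)}^2 d\calN_u$ up to $O(1/N)$; and $\p{\gamma_{u^-}^N(Q)}^2 = \p{p^N_{u^-}}^2 \L_{u^-}^2$, while $\Var_{\eta^N_{u^-}}(Q) \p{p^N_{u^-}}^2 = \p{p^N_{u^-}}^2\p{\frac1N\sum (\L^n)^2 - \L^2} = \gamma^N_{u^-}(Q^2) p^N_{u^-} - \p{\gamma_{u^-}^N(Q)}^2$, which is exactly how the $\gamma^N(Q^2)$ and $\p{\gamma^N(Q)}^2$ contributions reorganize.

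Finally I would collect: combining the expansion of the leading term, the boundary term $p_t^N \gamma_t^N(Q^2) - \gamma_0^N(Q^2)$, the $-2\int_0^t \gamma_{u^-}^N(Q^2) dp^N_u$ term (one factor from the $\A$-part integration by parts, one from the $\Var$-part rewriting — the bookkeeping here is what produces the coefficient $2$), the $\b{\gamma_t^N(Q)}^2\ln p_t^N$ boundary term, the martingale $\m_t^N$, the process $\ell_t^N$, and an aggregate $O(1/N)$ remainder absorbing all the jump corrections ($d[p^N,\gamma^N(Q^2)]$, the discrepancy between $d\ln p^N_u$ and $-d\calN_u$, the $(1-1/N)^2$ versus $1$ factors, and $\frac1N d\calR_u$ whose jumps are bounded by $14\norm{\ph}_\infty^2/N^2$ per~\eqref{lemab2}), one obtains exactly the claimed formula. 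The main obstacle I expect is the $O(1/N)$ bookkeeping: one must argue uniformly (over $t\in[0,T]$, in $L^1$ or almost surely) that the total number of branchings $\calN_T$ is such that $\frac1N \E[\calN_T]$ or the relevant sums stay bounded — this follows from~\eqref{major} and the estimate $\E\int_0^t p^N_{u^-} d\calN_u = \frac1N\sum_{j\ge1}(1-1/N)^{j-1} = 1$ used already in Lemma~\ref{lemgQ2} — so that each per-jump error of size $O(1/N^2)$ summed over $O(N\calN_T)$ jumps (weighted by $p^N_{u^-}$) is genuinely $O(1/N)$; making the notation $O(1/N)$ precise and confirming it is a process (not just a number) bounded appropriately is the delicate point, but it is entirely analogous to the controls already established in Lemmas~\ref{lemab} and~\ref{lemgQ2}.
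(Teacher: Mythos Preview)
Your approach is essentially the same as the paper's: use Lemma~\ref{lemgQ2} to replace $\int (p^N_{u^-})^2 d\A_u$ by $\int p^N_{u^-} d\gamma^N_u(Q^2)+\m^N_t$, integrate by parts once (the paper packages this as rule~\eqref{eq:ipp1} of Lemma~\ref{oachz}), rewrite $\Var_{\eta^N_{u^-}}(Q)p^N_{u^-}dp^N_u$ via the identity $\Var_{\eta^N}(Q)(p^N)^2=\gamma^N(Q^2)p^N-(\gamma^N(Q))^2$, and then convert the $(\gamma^N(Q))^2$ term to a $d\ln p^N$ integral followed by a second integration by parts (rules~\eqref{eq:ipp2} and~\eqref{eq:ipp3}). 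Your detours through~\eqref{laekcn} and~\eqref{lemab1} are unnecessary, as you yourself note; the paper simply bounds $\frac1N\int (p^N_{u^-})^2 d\calR_u$ directly using~\eqref{lemab2} and the geometric series $\sum_j(1-1/N)^{2j}$, and your $O(1/N)$ bookkeeping is likewise handled in the paper by the deterministic geometric weights $(1-1/N)^j$ rather than moment estimates on $\calN_T$.
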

\begin{proof}

Starting from (\ref{indef}), we apply Lemma~\ref{lemgQ2} to get
\begin{align*}
 i^N_t = \int_0^t p^N_{u^-} d\gamma_u^N\p{Q^2} -  \int_0^t \Var_{\eta^N_{u^-}}(Q)p^N_{u^-} dp^N_u + \m_t^N+\frac{1}{N}\int_0^t \big(p^N_{u^-}\big)^2  d \calR_u.
\end{align*}
Using (\ref{lemab2}), we are led to
\begin{align*}
\Big| \int_0^t \big(p^N_{u^-}\big)^2  d \calR_u \Big|
 \le \frac{14\|\ph\|_\infty^2}N\sum_{i=0}^\infty(1-\tfrac1N)^{2i}\le 7\|\ph\|_\infty^2,
\end{align*}

We claim now that a first timewise integration by parts (IBP) yields
\begin{align*}
\int_0^t p^N_{u^-} d\gamma_u^N\p{Q^2} = - \int_0^t \gamma_{u^-}^N\p{Q^2} dp^N_{u} + 
\gamma_t^N\p{Q^2} p^N_t - \gamma_0^N\p{Q^2} + O \p{\frac{1}{N}}.
\end{align*}
Indeed, Assumption~\ref{ass:Dp} implies that $|\Delta \gamma_{\tau_j}^N\p{Q^2}| \leq 2 \norm{\ph}_{\infty}^2/N$ so that Condition~(i) of Lemma \ref{oachz} is satisfied with $z^N_t=\gamma_t^N\p{Q^2}$ and IBP rule~(\ref{eq:ipp1}) can therefore be applied. \medskip

Next, remarking that 
$$\int_0^t \Var_{\eta^N_{u^-}}(Q)p^N_{u^-} dp^N_u=\int_0^t \gamma_{u^-}^N\p{Q^2} dp^N_{u}-\int_0^t \p{ \gamma_{u^-}^N\p{Q} }^2 \p{p^N_{u^-}}^{-1} dp^N_{u},$$
a second timewise IBP yields
\begin{align*} 
 \int_0^t  \p{ \gamma_{u^-}^N\p{Q}}^2 \p{p^N_{u^-}}^{-1} dp^N_{u} &= \int_0^t  \p{ \gamma_{u^-}^N\p{Q}}^2 d \log p^N_{u}  +O(\frac1N) \\
&= \b{\gamma_{t}^N\p{Q}}^2 \ln p^N_t - \int_0^t \ln p^N_{u^-} d\p{\gamma_{u}^N\p{Q} }^2 + O(\frac1N).
\end{align*}
Indeed, %
Assumption~\ref{ass:Dp} also implies, for $N\geq 2$, 
$$|\gamma^N_{\tau_j^-}(Q)| \leq 2\norm{\ph}_{\infty}(1-1/N)^{j}\hspace{.5cm}\mbox{and}\hspace{.5cm}|\Delta \gamma^N_{\tau_j}(Q)| \leq \frac{6\norm{\ph}_{\infty}}{N}(1-1/N)^{j-1}.$$ 
As a consequence, Conditions~(ii) and~(iii) of Lemma \ref{oachz} are satisfied for $z^N_t=\p{\gamma^N_t(Q)}^2$, so that we can apply successively rules~(\ref{eq:ipp2}) 
and~(\ref{eq:ipp3}) of  Lemma~\ref{oachz}. Finally, putting all estimates together gives the desired result.
\end{proof}

\begin{Lem}\label{pamich}
 One has $\E\b{\p{\m_t^N}^2} = O\p{1/N}$ as well as $\E \abs{\ell^N_t} = O\p{1/\sqrt{N}}$.
\end{Lem}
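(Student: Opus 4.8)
The plan is to estimate separately the two quantities $\m_t^N$ and $\ell^N_t$, using in both cases the tools already assembled: the martingale bounds from Lemma~\ref{lemgQ2}, the jump estimates from Corollary~\ref{mzoecjleijf}, and the uniform convergence of $p_t^N$ to $p_t$ from Lemma~\ref{lzich}.

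\textbf{Estimating $\m_t^N$.} Recall that $\m_t^N = -\frac{1}{\sqrt N}\int_0^t (p^N_{u^-})^2 \, d\tM_u$ is a local martingale, and that by Assumption~\ref{ass:E} the localizing sequence $(\tau_j)_{j\geq 1}$ eventually exceeds $T$. The plan is to apply It\^o's isometry (after stopping at $\tau_j$ and passing to the limit, which is justified because all bounds below are uniform in the localization):
\begin{align*}
\E\b{\p{\m_{t\wedge\tau_j}^N}^2} = \frac1N \E\left[\int_0^{t\wedge \tau_j} (p^N_{u^-})^4 \, d[\tM,\tM]_u\right] \leq \frac1N \E\left[\int_0^{t} p^N_{u^-} \, d[\tM,\tM]_u\right] \leq \frac{5\|\ph\|_\infty^4}{N},
\end{align*}
where I used $(p^N_{u^-})^4 \leq (p^N_{u^-})^3 \leq p^N_{u^-}$ (since $0\le p^N_{u^-}\le 1$) and then estimate~\eqref{lemgQ24}. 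Monotone convergence as $j\to\infty$ gives $\E\b{(\m_t^N)^2}\le 5\|\ph\|_\infty^4/N = O(1/N)$, in particular $\m_t^N$ is a genuine (square-integrable) martingale.

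\textbf{Estimating $\ell^N_t$.} This is the delicate term, and I expect it to be the main obstacle. We have $\ell^N_t = -\int_0^t \ln p^N_{u^-}\, d(\gamma_u^N(Q))^2$. The idea is to integrate by parts once more to move the differential off the square: writing $(\gamma_u^N(Q))^2$ in terms of its own dynamics, and using $d(\gamma_u^N(Q))^2 = 2\gamma_{u^-}^N(Q)\,d\gamma_u^N(Q) + d[\gamma^N(Q),\gamma^N(Q)]_u$, one splits $\ell^N_t$ into a martingale part (driven by $\int \ln p^N_{u^-}\gamma_{u^-}^N(Q)\,d\gamma_u^N(Q)$, whose $\L^2$-norm is controlled by It\^o's isometry together with the bound $|\ln p^N_{u^-}|\le$ a constant on the event where $p^N_T$ is bounded below — which holds with high probability by Lemma~\ref{lem:pcont} and Lemma~\ref{lzich}) and a bracket part $-\int_0^t \ln p^N_{u^-}\,d[\gamma^N(Q),\gamma^N(Q)]_u$. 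For the bracket part, by Lemma~\ref{lem:i_mart} we can replace $N[\gamma^N(Q),\gamma^N(Q)]_u$ by $i_u^N$ modulo a local martingale, and $i_u^N$ has expectation $O(1)$ by Lemma~\ref{lemgQ2}~\eqref{major} and the estimate on $\calR$; so $\E\left|\int_0^t \ln p^N_{u^-}\,d[\gamma^N(Q),\gamma^N(Q)]_u\right| = O(1/N)$ once we control $|\ln p^N_{u^-}|$. The remaining martingale contribution has $\L^2$-norm $O(1/\sqrt N)$ — this is the term that prevents a better rate — so by Cauchy–Schwarz $\E|\ell^N_t| = O(1/\sqrt N)$.

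\textbf{The obstacle.} The crux is that $\ln p^N_{u^-}$ is not bounded uniformly in $\omega$ and $N$: in principle $p^N_{u^-}$ could be very small. The resolution is to work on the event $A_N = \{\inf_{t\in[0,T]} p_t^N \geq p_T/2\}$, which by Lemma~\ref{lzich} (together with the uniform strict positivity of $t\mapsto p_t$ from Lemma~\ref{lem:pcont}) satisfies $\P(A_N)\to 1$; on $A_N$ one has $|\ln p^N_{u^-}|\leq |\ln(p_T/2)|$, a fixed constant. On the complement, all the relevant quantities ($\m_t^N$, $\gamma_u^N(Q)$, the brackets) are bounded by deterministic constants depending only on $\|\ph\|_\infty$ and $N$ via Corollary~\ref{mzoecjleijf}, so their contribution is $O(\P(A_N^c))\cdot O(1)$, which is $o(1)$; a slightly more careful argument, combining Cauchy–Schwarz with the $\L^2$-bounds on $\m^N$ and the $\L^1$-bound on the bracket, upgrades this to the claimed $O(1/\sqrt N)$ for $\ell^N$ and $O(1/N)$ for $(\m^N)^2$. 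Care must be taken that stopping at $\tau_j$ commutes with these operations and that the limits are taken in the right order, but no genuinely new idea beyond the already-established lemmas is needed.
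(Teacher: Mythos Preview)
Your treatment of $\m_t^N$ is correct and essentially identical to the paper's.

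Your treatment of $\ell^N_t$ begins the same way as the paper --- the It\^o decomposition
\[
\ell^N_t = -2\int_0^t \ln p^N_{u^-}\,\gamma^N_{u^-}(Q)\,d\gamma^N_u(Q) - \int_0^t \ln p^N_{u^-}\,d[\gamma^N(Q),\gamma^N(Q)]_u
\]
is exactly what is used --- but then diverges, and the divergence creates a genuine gap. You try to tame $|\ln p^N_{u^-}|$ by restricting to the event $A_N=\{\inf_t p^N_t\ge p_T/2\}$ and invoking Lemma~\ref{lzich}. The problem is that Lemma~\ref{lzich} gives only convergence in probability, with no rate whatsoever on $\P(A_N^c)$. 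So on $A_N^c$ you obtain at best a contribution of size $O(1)\cdot\P(A_N^c)=o(1)$, not $O(1/\sqrt N)$. Your sentence ``a slightly more careful argument \ldots upgrades this to the claimed $O(1/\sqrt N)$'' is precisely where the proof is missing: no combination of the $\L^2$ bound on $\m^N$ and the $\L^1$ bracket bound will manufacture a rate for $\P(A_N^c)$ that was never proved. Moreover, your claim that $\ell^N_t$ is bounded by a deterministic constant on $A_N^c$ is itself suspect, since $|\ell^N_t|$ is controlled only in terms of $\calN_T$, which is almost surely finite but not deterministically bounded.

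The paper avoids localization entirely by exploiting the built-in factors of $p^N_{u^-}$. For the martingale integrand one writes $\gamma^N_{u^-}(Q)=p^N_{u^-}\eta^N_{u^-}(Q)$, so that
\[
\bigl(\ln p^N_{u^-}\,\gamma^N_{u^-}(Q)\bigr)^2 = (p^N_{u^-})^2(\ln p^N_{u^-})^2\,\eta^N_{u^-}(Q)^2 \le |\ln p^N_{u^-}|\,\|\ph\|_\infty^2,
\]
using $p^2|\ln p|\le 1$. For the bracket term one uses $d[\gamma^N(Q),\gamma^N(Q)]_u=\tfrac1N(p^N_{u^-})^2\,d(\A_u+[\calM,\calM]_u)$, so $|\ln p^N_{u^-}|(p^N_{u^-})^2\le p^N_{u^-}$ via $p|\ln p|\le 1$. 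Both reductions feed into the single quantity $c(N)=\E\int_0^T|\ln p^N_{u^-}|\,d[\gamma^N(Q),\gamma^N(Q)]_u$, which is then shown to be $O(1/N)$ directly from \eqref{major} and \eqref{mrondcroch}, yielding $\E|\ell^N_t|\le 2\|\ph\|_\infty\sqrt{c(N)}+c(N)=O(1/\sqrt N)$. No event-splitting and no appeal to Lemma~\ref{lzich} are needed.
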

\begin{proof}
The first assertion is an immediate consequence of Itô's isometry for martingales, together with~(\ref{lemgQ24}). For the second one, Itô's formula yields
\begin{align*}
\ell^N_t \eqdef -2 \int_0^t \ln p^N_{u^-} \gamma^N_{u^-}(Q)   d\gamma^N_{u}(Q) - \int_0^t \ln p^N_{u^-} d\b{ \gamma^N(Q) , \gamma^N(Q)}_u.
\end{align*}
Therefore,
$$\E\left|\ell^N_t\right|\leq2 \E\left[\left|\int_0^t \ln p^N_{u^-} \gamma^N_{u^-}(Q)   d\gamma^N_{u}(Q)\right|\right] + \E\left[\int_0^t |\ln p^N_{u^-}| d\b{ \gamma^N(Q) , \gamma^N(Q)}_u\right].$$
Then,  Cauchy-Schwarz
 inequality and Itô's isometry provide
\begin{align*}
\E\left|\ell^N_t\right|\leq&\ 2 \left(\E\left[\int_0^t \left(\ln p^N_{u^-} \gamma^N_{u^-}(Q)\right)^2  d\b{ \gamma^N(Q) , \gamma^N(Q)}_u \right]\right)^{1/2}\\
 &+ \E  \left[\int_0^t |\ln p^N_{u^-}| d\b{ \gamma^N(Q) , \gamma^N(Q)}_u\right].
\end{align*}
Since $p^2|\ln p|  \leq 1$ for any $p\in(0,1]$, we have
$$\left(\ln p^N_{u^-} \gamma^N_{u^-}(Q)\right)^2=\left(\ln p^N_{u^-} \times p^N_{u^-}\eta^N_{u^-}(Q)\right)^2\leq |\ln p^N_{u^-}|\times\|\ph\|_\infty^2.$$
Hence, if we denote
\begin{align*}
 c(N) := \E \left[\int_0^T \left|\ln p^N_{u^-}\right| d\b{ \gamma^N(Q) , \gamma^N(Q)}_u\right],
\end{align*}
it comes
$$\E\left|\ell^N_t\right|\leq2\|\ph\|_\infty\sqrt{c(N)}+c(N).$$
Next, the basic decomposition of Lemma~\ref{lem:decomp} yields $d \b{ \gamma^N(Q) , \gamma^N(Q)}_t = \frac1N \p{p^N_{t^-}}^2 \b{\M,\M}_t + \frac1N \p{p^N_{t^-}}^2 \b{\calM,\calM}_t$, so that the orthogonality property~\ref{Lem:quad} allows us to reformulate $c(N)$ as
\begin{align*}
 c(N) =\frac1{N}\E\left[\int_0^t \left|\ln p^N_{u^-}\right| \p{p^N_{u^-}}^2 (d\A_u+d\b{\calM,\calM}_u)\right].
\end{align*}
Using the fact that  $p |\ln p|  \leq 1$ together with~\eqref{mrondcroch}, it yields 
\begin{align*}
 c(N) \leq \frac1{N}\E \b{\int_0^T p^N_{t^-} d \A_t}+\frac4{N^2}\|\ph\|_\infty^2 \sum_{j\geq 1} (1-\tfrac1N)^j,
\end{align*}
so that~\eqref{major} gives $c(N)\le \frac5N\|\ph\|_\infty^2$ and the proof is complete.
\end{proof}

\subsection{The asymptotic variance and the convergence}

For the forthcoming calculations we recall that
\begin{align*}%
p_t \Var_{\eta_t}\p{Q^{T-t}(\ph)} &=   \gamma_t\b{ \p{Q^{T-t}(\ph)}^2} - p_t^{-1}\b{ \gamma_t\p{Q^{T-t}(\ph)}}^2\\
& = \gamma_t\b{ \p{Q^{T-t}(\ph)}^2} - p_t^{-1} \p{\gamma_T(\ph)}^2\\
& = \gamma_t\p{Q^2} - p_t^{-1} \p{\gamma_T(\ph)}^2. 
\end{align*}

The asymptotic variance formula will be denoted as follows:

\begin{Def} For any $t \in[0,T]$ and any $\ph\in{\cal D}$, let us define
\begin{align}
i_t(\ph) \eqdef & p_t\gamma_t(Q^2) - \gamma_0(Q^2) + \b{\gamma_t(Q)}^2 \ln p_t - 2 \int_0^{t}\gamma_u\p{Q^2} dp_u.\label{eq:var_1}
 \end{align}
\end{Def}

Our next purpose is to show that $i_t(\ph)$ corresponds to the asymptotic variance of  interest, as suggested by Lemma~\ref{oaichmp}.

\begin{Pro}\label{almchi}
For any $t \in[0,T]$, one has
\begin{align*}
 i^N_t \xrightarrow[{N \to\infty}]{\P} i_t(\ph).
\end{align*}
\end{Pro}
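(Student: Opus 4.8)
The plan is to pass to the limit in the explicit expression for $i^N_t$ given by Lemma~\ref{oaichmp}, term by term, using the already-established $L^2$ (hence $\P$-) convergences of $\gamma^N_\cdot$ together with the uniform convergence of $p^N_\cdot$. Recall from Lemma~\ref{oaichmp} that
\begin{align*}
 i^N_t = p^N_t \gamma_t^N\p{Q^2} - \gamma_0^N\p{Q^2} + \b{\gamma_t^N\p{Q}}^2\ln p^N_t - 2 \int_0^t \gamma^N_{u^-}(Q^2) dp^N_u + \m_t^N + \ell^N_t + O\p{\tfrac1N},
\end{align*}
and from Lemma~\ref{pamich} that $\m_t^N \to 0$ and $\ell^N_t \to 0$ in probability. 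So the real content is to show that the four remaining deterministic-looking terms converge to their counterparts in~\eqref{eq:var_1}.

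For the pointwise-in-time terms this is routine: Proposition~\ref{pro:estimate} applied with test functions $\ph$ and $\ph^2$ (or rather, at the generic time $t$, the functions $Q^{T-t}(\ph)$ and $[Q^{T-t}(\ph)]^2$, which are bounded since $\ph\in\calD\subseteq\overline\calD$) gives $\gamma^N_t(Q^2) \to \gamma_t(Q^2)$ and $\gamma^N_t(Q) \to \gamma_t(Q)=\gamma_T(\ph)$ in $L^2$, hence in probability; Lemma~\ref{lzich} gives $p^N_t \to p_t$ and, by continuity of $x\mapsto x\ln x$ on $(0,1]$ together with $p^N_t, p_t$ bounded away from $0$ on $[0,T]$ (Lemma~\ref{lem:pcont}), also $\ln p^N_t \to \ln p_t$. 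Products of convergent-in-probability sequences converge, so $p^N_t\gamma^N_t(Q^2) \to p_t\gamma_t(Q^2)$, $\gamma^N_0(Q^2)\to\gamma_0(Q^2)$, and $[\gamma^N_t(Q)]^2\ln p^N_t \to [\gamma_t(Q)]^2\ln p_t$. The main obstacle is the integral term $\int_0^t \gamma^N_{u^-}(Q^2)\, dp^N_u$, since $p^N$ is a random (pure-jump) integrator and we need to identify the limit as $\int_0^t \gamma_u(Q^2)\, dp_u$, where $p$ is the deterministic continuous limit.

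To handle that term I would split it as
\begin{align*}
\int_0^t \gamma^N_{u^-}(Q^2)\, dp^N_u = \int_0^t \big(\gamma^N_{u^-}(Q^2) - \gamma_{u^-}(Q^2)\big)\, dp^N_u + \int_0^t \gamma_{u^-}(Q^2)\, dp^N_u,
\end{align*}
noting $\gamma_u(Q^2)$ is deterministic and, by Lemma~\ref{lem:pcont} and Assumption~\ref{ass:D}(ii), continuous in $u$ (since $u\mapsto Q^{T-u}(\ph)$ and $u\mapsto Q^{T-u}(\ph^2)$ vary continuously, giving continuity of $u\mapsto\gamma_u(Q^2)=\eta_0(Q^u([Q^{T-u}\ph]^2))$, or more simply $\gamma_u(Q^2)=\gamma_u([Q^{T-u}\ph]^2)$ is continuous). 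For the second piece, since $t\mapsto p^N_t$ and $t\mapsto p_t$ are non-increasing with total variation $p_0-p^N_t \le 1$ and $p^N \to p$ uniformly (Lemma~\ref{lzich}), a standard Helly/Riemann--Stieltjes argument — approximate the continuous integrand uniformly by a step function, use $\int g\, dp^N \to \int g\, dp$ for step $g$ which reduces to finitely many pointwise values $p^N_{t_i}\to p_{t_i}$, and control the error by $\|g-\gamma_\cdot(Q^2)\|_\infty \cdot (p_0-p^N_t)$ — gives $\int_0^t \gamma_{u^-}(Q^2)\, dp^N_u \to \int_0^t \gamma_u(Q^2)\, dp_u$ in probability. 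For the first piece, I would bound $\big|\int_0^t(\gamma^N_{u^-}(Q^2)-\gamma_{u^-}(Q^2))\,dp^N_u\big| \le \int_0^t |\gamma^N_{u^-}(Q^2)-\gamma_{u^-}(Q^2)|\,(-dp^N_u) \le \sup_{j:\,\tau_j\le t}|\gamma^N_{\tau_j^-}(Q^2)-\gamma_{\tau_j^-}(Q^2)| \cdot (p_0 - p^N_t)$ — the time marginals of $\gamma^N(Q^2)$ converge but here one needs it uniformly along the (random, countable) set of branching times, so I would instead upgrade the marginal $L^2$ bound of Proposition~\ref{pro:estimate} to a uniform-in-time estimate for $\sup_{u\le t}|\gamma^N_u(Q^2) - \gamma_u(Q^2)|$ via Doob's maximal inequality applied to the martingale part of the decomposition (Lemma~\ref{lemgQ2}) plus a Dini-type argument exactly as in Lemma~\ref{lzich} (using continuity of $u\mapsto\gamma_u(Q^2)$ and a subdivision), so that this first piece vanishes in probability. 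Combining all convergences and recollecting terms yields $i^N_t \to i_t(\ph)$ in probability, which is the claim.
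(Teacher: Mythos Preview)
Your approach is correct in outline but takes a different route from the paper on the one nontrivial term, the Stieltjes integral $\int_0^t \gamma^N_{u^-}(Q^2)\,dp^N_u$. The paper splits the difference as $a^N_t+b^N_t$ with $a^N_t=\int_0^t\gamma^N_{u^-}(Q^2)\,d(p^N_u-p_u)$ and $b^N_t=\int_0^t(\gamma^N_{u^-}(Q^2)-\gamma_u(Q^2))\,dp_u$; the key move for $a^N_t$ is an integration by parts (rule~\eqref{eq:ipp1}) turning it into $-\int_0^t(p^N_{u^-}-p_u)\,d\gamma^N_u(Q^2)$ plus a boundary term, after which the decomposition~\eqref{lemgQ21} and the bound $\bigl|\int(p^N-p)p^N\,d\A\bigr|\le\sup_u|p^N_{u^-}-p_u|\cdot\int p^N\,d\A$ reduce everything to the already-known uniform convergence of $p^N$ (Lemma~\ref{lzich}) and the $L^1$ bound~\eqref{major}. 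You instead keep the random integrator $dp^N$ and must therefore establish $\sup_{u\le t}|\gamma^N_u(Q^2)-\gamma_u(Q^2)|\to 0$ in probability. Your Doob-plus-Dini scheme for this does work --- the drift $\int_0^u p^N_{s^-}\,d\A_s$ is increasing and its pointwise limit $\gamma_u(Q^2)-\gamma_0(Q^2)$ is continuous, being the second moment increment of the bounded martingale $u\mapsto Q^{T-u}(\ph)(X_u)$ whose jumps are atomless under Assumption~\ref{ass:D} --- but note that your justification of this continuity via Assumption~\ref{ass:D}(ii) alone is incomplete (the martingale second-moment argument just mentioned is the clean way), and the same continuity is also what your Helly step needs. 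The paper's integration-by-parts trick buys you the shortcut of never needing uniform convergence of $\gamma^N(Q^2)$ nor continuity of $u\mapsto\gamma_u(Q^2)$: all the uniformity burden lands on $p^N$, where monotonicity makes it free.
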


\begin{proof}
By Lemma \ref{oaichmp} and the relation $\gamma_t\p{Q^{T-t}(\ph)}=\gamma_t\p{Q}=\gamma_T\p{\ph}$, we can write
\begin{align*}
i^N_t-i_t(\ph)=&\Big(p^N_t \gamma_t^N\p{Q^2}-p_t \gamma_t\p{Q^2}\Big) 
-\Big(\gamma_0^N\p{Q^2}-\gamma_0\p{Q^2}\Big)\\
& + \Big(\gamma_t^N\p{Q}^2\ln p^N_t-\gamma_t\p{Q}^2\ln p_t\Big) +  \m_t^N + \ell^N_t+ O\p{1/N}\\
& -  2 \left(\int_0^t \gamma^N_{u^-}(Q^2) dp^N_u -\int_0^t \gamma_{u}(Q^2) dp_u\right) .
\end{align*}
Clearly, by Proposition~\ref{pro:estimate} and Lemma~\ref{pamich}, 
the boundary terms and the rest terms all tend to $0$ in probability. So we just have to show that
$$\int_0^t \gamma^N_{u^-}(Q^2) dp^N_u -\int_0^t \gamma_{u}(Q^2) dp_u=a^N_t+b^N_t$$
goes to $0$ as well, where we have defined 
$$a^N_t:= \int_0^t \gamma_{u^-}^N(Q^2)d(p^N_u-p_u)\qquad\mbox{and}\qquad b^N_t:=\int_0^{t} \p{\gamma^N_{u^-}(Q^2) -\gamma_{u}(Q^2) } dp_u.$$

The convergence of $b_t^N \xrightarrow[]{\P} 0$ is a direct consequence of Proposition~\ref{pro:estimate}. The proof of $a_t^N \xrightarrow[]{\P} 0$ requires more attention. Since $|\Delta \gamma_{\tau_j}^N(Q^2)| \leq 2 \norm{\ph}_\infty/N$, the timewise integration by parts rule~\eqref{eq:ipp1} of Lemma \ref{oachz} enables us to rewrite the first term as 
$$
a_t^N= -\int_0^t (p^N_{u^-}-p_u) \d\gamma_{u}^N(Q^2)+\gamma_{t}^N(Q^2)(p^N_t-p_t)+O\p{1/N},
$$
where we have used $p^N_0=p_0=1$. Since $\gamma_{t}^N(Q^2)$ is bounded, the boundary term goes to $0$ by Proposition \ref{pro:estimate}. For the integral term, equation (\ref{lemgQ21}) leads to the decomposition
\begin{equation}
\int_0^t (p^N_{u^-}-p_u) \d\gamma_{u}^N(Q^2)
= \int_0^t (p^N_{u^-}-p_u)p^N_{u^-} d\A_u+\frac{1}{\sqrt{N}}\int_0^t (p^N_{u^-}-p_u)p^N_{u^-} d\tM_u.\label{liashjec}
\end{equation}
Since $\A$ is an increasing process, it comes
\begin{equation}
\left|\int_0^t (p^N_{u^-}-p_u) p^N_{u^-}d\A_u\right|
\leq \sup_u |p^N_{u^-}-p_u|\times\left(\int_0^t p^N_{u^-}d\A_u\right).\label{lazeich}
\end{equation}
The supremum term goes to $0$ in probability by Lemma \ref{lzich} and, by~\eqref{lemgQ21}, 
$$\E\left[\int_0^t p^N_{u^-}d\A_u\right]=\E\left[\gamma_t^N(Q^2)\right]\leq\|\varphi\|_\infty^2.$$
So the right hand side of (\ref{lazeich}) is the product of an $\smallo_\P(1)$ 
with an $\bigo_\P(1)$, which is classically an $\smallo_\P(1)$ (see for example \cite{schervish}, Theorem 7.15, for a general version of this result), and the first term of (\ref{liashjec}) goes to zero in probability.\medskip

For the second term in (\ref{liashjec}), just notice that $\abs{p^N_{u^-}-p_u}p^N_{u^-}\leq 1$, so that It\^o isometry and (\ref{lemgQ24}) yields
\[\E\b{\p{\int_0^t (p^N_{u^-}-p_u) p^N_{u^-} d\tM_u}^2}=\E\b{\int_0^t(p^N_{u^-}-p_u)^2 \p{p^N_{u^-}}^2 d[\tM,\tM]_u}
\leq 5 \|\ph\|_\infty^4
\]
and $a^N_t$  tends to zero in probability as well.
\end{proof}

\subsection{Another formulation of the asymptotic variance}
In order to retrieve the expression of Theorem \ref{gamma}, one can then simplify the variance at final time $T$ as follows.
\begin{Lem}Define
$$
\sigma_T^2(\ph) \eqdef \Var_{\eta_0}\p{Q^T(\ph)} + i_T(\ph),
$$
with $i_T(\ph)$ like in~\eqref{eq:var_1}, then 
\begin{align}\label{eq:var_2}
\sigma_T^2(\ph) = p^2_T \Var_{\eta_T}(\ph) - p_T^2\ln(p_T) \, \eta_T(\ph)^2 - 2\int_0^T \Var_{\eta_{t}}(Q^{T-t}(\ph)) p_t dp_t.
\end{align}
\end{Lem}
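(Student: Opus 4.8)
The plan is to start from the definition $\sigma_T^2(\ph) = \Var_{\eta_0}(Q^T(\ph)) + i_T(\ph)$ and substitute the expression~\eqref{eq:var_1} for $i_T(\ph)$, then simplify each piece by exploiting that at the final time $T$ the operator $Q^{T-T}(\ph) = Q^0(\ph) = \ph$ and that $\gamma_T(Q) = \gamma_T(\ph)$, $\gamma_0(Q^2) = \gamma_0((Q^T\ph)^2) = \eta_0((Q^T\ph)^2)$, $\gamma_t(Q^2) = \gamma_t([Q^{T-t}\ph]^2)$, and $p_0 = 1$. The first step is to rewrite the $\Var_{\eta_0}$ term as $\eta_0((Q^T\ph)^2) - \eta_0(Q^T\ph)^2 = \gamma_0(Q^2) - \gamma_T(\ph)^2$, using $\gamma_0 = \eta_0$ and $\gamma_T(\ph) = \eta_0(Q^T\ph)$. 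The $-\gamma_0(Q^2)$ appearing in $i_T(\ph)$ then cancels the $+\gamma_0(Q^2)$ from the variance term, which is the main algebraic simplification.

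Next I would treat the term $p_T \gamma_T(Q^2)$. Using the identity recalled just before the definition of $i_t(\ph)$, namely $p_t\Var_{\eta_t}(Q^{T-t}(\ph)) = \gamma_t(Q^2) - p_t^{-1}\gamma_T(\ph)^2$, specialized at $t = T$, we get $\gamma_T(Q^2) = p_T\Var_{\eta_T}(\ph) + p_T^{-1}\gamma_T(\ph)^2$, so that $p_T\gamma_T(Q^2) = p_T^2\Var_{\eta_T}(\ph) + \gamma_T(\ph)^2 = p_T^2\Var_{\eta_T}(\ph) + p_T^2\eta_T(\ph)^2$. Combined with the $-\gamma_T(\ph)^2$ surviving from the variance term, this produces exactly the first summand $p_T^2\Var_{\eta_T}(\ph)$ of~\eqref{eq:var_2}. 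For the logarithmic term, $[\gamma_T(Q)]^2\ln p_T = \gamma_T(\ph)^2\ln p_T = p_T^2\eta_T(\ph)^2\ln p_T$, which gives the middle term $-p_T^2\ln(p_T)\eta_T(\ph)^2$ after accounting for sign.

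Finally, for the integral term $-2\int_0^T \gamma_u(Q^2)\,dp_u$, I would again invoke the recalled identity to write $\gamma_u(Q^2) = p_u\Var_{\eta_u}(Q^{T-u}(\ph)) + p_u^{-1}\gamma_T(\ph)^2$, so that
\begin{align*}
-2\int_0^T \gamma_u(Q^2)\,dp_u = -2\int_0^T \Var_{\eta_u}(Q^{T-u}(\ph))\,p_u\,dp_u - 2\gamma_T(\ph)^2\int_0^T p_u^{-1}\,dp_u.
\end{align*}
The first integral is already the third term of~\eqref{eq:var_2}. The second integral equals $\int_0^T d(\ln p_u) = \ln p_T - \ln p_0 = \ln p_T$ since $p_0 = 1$ and $t\mapsto p_t$ is continuous and strictly positive on $[0,T]$ by Lemma~\ref{lem:pcont}; hence this contributes $-2\gamma_T(\ph)^2\ln p_T = -2p_T^2\eta_T(\ph)^2\ln p_T$. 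Collecting all logarithmic contributions — $+p_T^2\eta_T(\ph)^2\ln p_T$ from the $[\gamma_T(Q)]^2\ln p_T$ term and $-2p_T^2\eta_T(\ph)^2\ln p_T$ from the integral — yields a net $-p_T^2\ln(p_T)\eta_T(\ph)^2$, matching the middle term of~\eqref{eq:var_2}, and the proof is complete.

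The only subtlety, and the one place where an earlier hypothesis is genuinely needed rather than pure algebra, is the evaluation $\int_0^T p_u^{-1}\,dp_u = \ln p_T$: this requires that $p_u$ be bounded away from zero on $[0,T]$ so that the change of variables is licit, which is precisely guaranteed by the strict positivity in Lemma~\ref{lem:pcont}. Everything else is bookkeeping of signs and repeated use of the three displayed identities for $p_t\Var_{\eta_t}(Q^{T-t}(\ph))$, so I would not expect any real obstacle beyond being careful with the cancellation of the $\gamma_0(Q^2)$ terms and the three separate contributions to the $\ln p_T$ coefficient.
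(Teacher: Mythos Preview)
Your argument is correct and follows essentially the same route as the paper: both use the identity $\gamma_t(Q^2) = p_t\Var_{\eta_t}(Q^{T-t}(\ph)) + p_t^{-1}\gamma_T(\ph)^2$ to split the integral, evaluate $\int_0^T p_u^{-1}\,dp_u = \ln p_T$, and cancel $\gamma_0(Q^2)$ against the $\Var_{\eta_0}$ term. The only slip is expository: the sentence claiming that $[\gamma_T(Q)]^2\ln p_T$ alone ``gives the middle term $-p_T^2\ln(p_T)\eta_T(\ph)^2$ after accounting for sign'' is premature (that single term contributes $+p_T^2\eta_T(\ph)^2\ln p_T$, not $-$), but you correctly fix this in the final paragraph when you combine it with the $-2p_T^2\eta_T(\ph)^2\ln p_T$ from the integral.
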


\begin{proof} Since $\gamma_T(Q^2)=\gamma_T(\ph^2)$,
\begin{align}\label{stdeb}
i_T
&=p_T\gamma_T(\ph^2)-\gamma_0(Q^2)+\gamma_T(\ph)^2 \ln p_T-2\int_0^T\gamma_t\p{Q^2} dp_t.
 \end{align}
Furthermore, by definition,
\begin{align}\label{ptmoins1}
p_t^{-1}\gamma_t\p{Q^2}
=\eta_t(Q^{T-t}(\ph)^2)
=\Var_{\eta_{t}}(Q^{T-t}(\ph))+p_t^{-2}\gamma_t(Q^{T-t}(\ph))^2.
\end{align}
Recall that $\gamma_t(Q^{T-t}(\ph))=\gamma_T(\ph)$, so that reporting the latter identity into (\ref{ptmoins1}),
and then (\ref{ptmoins1}) into (\ref{stdeb}) gives
\begin{align*}
i_T
&=p_T\gamma_T(\ph^2)-\gamma_0(Q^2)-\gamma_T(\ph)^2 \ln p_T-2\int_0^T\Var_{\eta_t}(Q^{T-t}(\ph))p_t dp_t.
 \end{align*}
In the same way, $p_T\gamma_T(\ph^2)=p_T^2\Var_{\eta_T}(\ph)+\gamma_T(\ph)^2$ and $\Var_{\eta_0}\p{Q^T(\ph)}= \gamma_0(Q^2)-\p{\gamma_T(\ph)}^2$, hence the result.
\end{proof}

\subsection{Martingale Central Limit Theorem}
The following result is an adaptation of Theorem $1.4$ page 339 in~\cite{ek86} to our specific context. The main difference is about the initial condition.

\begin{The}\label{aeichj}
On a filtered probability space, let $t \mapsto z_t^N$ denote a sequence of c\`adl\`ag local martingales indexed by $N \geq 1$. Assume moreover that
\begin{enumerate}[(i)]
\item $z_0^N \xrightarrow[N \to + \infty]{\calD} \mu_0$, where $\mu_0$ is a given probability on $\R$.
\item One has $\lim_{N \to +\infty} \E[\sup_{t \in [0,T]}\abs{\Delta z^N_t }^2] =  0$.
\item For each $N$, there exists an increasing \cadlag process $t \mapsto i^N_t$ such that 
$t \mapsto \p{z_t^N-z^N_0}^2 -i^N_t$ is a local martingale.
\item The process  $t \mapsto i^N_t$ satisfies $\lim_{N \to +\infty} \E\left[\sup_{t \in [0,T]} \Delta i^N_t \right] =  0$.
\item There exists a continuous and increasing deterministic function $t \mapsto i_t$ such that, for all $t\in[0,T]$,
 \begin{align*}
  i^N_t \xrightarrow[N \to + \infty]{\P} i_t.
 \end{align*}
 \end{enumerate}
Then $(z_t^N)_{t \in [0,T]}$ converges in law (under the Skorokhod topology) to $(Z_t)_{t \in [0,T]}$, where  $Z_0 \sim \mu_0$ and  $(Z_t-Z_0)_{t \in [0,T]}$ is a Gaussian process, independent of $Z_0$, with independent increments and variance function $i_t$.  
\end{The}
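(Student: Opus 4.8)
\emph{Plan.} The strategy is to center, apply the classical martingale functional CLT to the centered process, and then reattach the initial condition --- this last step being the one not covered by \cite{ek86}. Set $M^N_t \eqdef z^N_t - z^N_0$, a \cadlag local martingale with $M^N_0 = 0$; without loss of generality $i^N_0 = 0$ (subtract the constant $i^N_0$), so that $i_0 = 0$ by (v). Hypothesis (iii) says that $(M^N)^2 - i^N$ is a local martingale, so $i^N$ is a legitimate substitute for the predictable quadratic variation $\langle M^N\rangle$ in the usual statements, provided one can transfer the required limits onto it. First I would observe, exactly as in the Dini-type argument of Lemma~\ref{lzich} (monotone $i^N$, continuous deterministic limit, pointwise convergence (v)), that in fact $\sup_{t\in[0,T]}|i^N_t - i_t| \to 0$ in probability; in particular $i^N_T$ is bounded in probability. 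Then Lenglart's inequality, applied to the nonnegative processes $(M^N)^2$ and $[M^N,M^N]$, both dominated in Lenglart's sense by $i^N$ (since $(M^N)^2 - i^N$ and $[M^N,M^N]-(M^N)^2$ are local martingales), shows that $\sup_{t\le T}(M^N_t)^2$ and $[M^N,M^N]_T$ are bounded in probability. Now $[M^N,M^N] - i^N$ is a local martingale whose quadratic variation is bounded by $\big(\sup_s|\Delta M^N_s|^2 + \sup_s\Delta i^N_s\big)\big([M^N,M^N]_T + i^N_T\big)$, which tends to $0$ in probability by (ii) and (iv); a further application of Lenglart yields $\sup_{t\le T}\big|[M^N,M^N]_t - i^N_t\big| \to 0$ in probability. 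Combining, $[M^N,M^N]_t \to i_t$ in probability for every $t$, while (ii) gives $\E[\sup_{t\le T}|\Delta M^N_t|] \to 0$ by Jensen. These are precisely the hypotheses of the martingale functional CLT (Theorem~1.4, p.~339, in \cite{ek86}), which therefore gives $M^N \Rightarrow M$ in $D([0,T],\R)$ for the Skorokhod topology, where $M_0 = 0$ and $M$ is a continuous centered Gaussian process with independent increments and $\Var(M_t) = i_t$.

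\emph{Reattaching the initial condition.} Here $z^N = z^N_0 + M^N$ with $z^N_0$ adapted to $\mathcal F^N_0$, and the point is that the limit $M$ is asymptotically independent of $\mathcal F^N_0$, because its limiting characteristic $i_t$ is deterministic. Concretely, the standard proof of the martingale CLT shows that, for fixed $\theta\in\R$, the process $\exp\!\big(i\theta M^N_t + \tfrac{\theta^2}{2} i^N_t\big)$ differs from an $(\mathcal F^N_t)$-local martingale by a remainder that tends to $0$ uniformly on $[0,T]$ in probability (the ``small-jump'' remainder, controlled by $\sup_s|\Delta M^N_s|$, $\sup_s\Delta i^N_s$ and the stochastic boundedness just obtained; the localization at $\inf\{t:i^N_t\ge K\}$ needed to take expectations is harmless since $i^N_T$ is bounded in probability). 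Taking conditional expectations given $\mathcal F^N_0$, iterating over $0 = t_0 < t_1 < \dots < t_k \le T$, and using $i^N_0 = 0$ together with $i^N \to i$ uniformly, one gets that for every bounded continuous $g$ and all $\theta_1,\dots,\theta_k\in\R$,
\begin{equation*}
\E\!\Big[g(z^N_0)\prod_{j=1}^k e^{\,i\theta_j(M^N_{t_j}-M^N_{t_{j-1}})}\Big] \;\xrightarrow[N\to\infty]{}\; \E[g(Z_0)]\prod_{j=1}^k e^{-\frac{\theta_j^2}{2}(i_{t_j}-i_{t_{j-1}})},
\end{equation*}
with $Z_0\sim\mu_0$ by (i). By the L\'evy continuity theorem this identifies the limit of the finite-dimensional laws of $(z^N_0,M^N)$ as $(Z_0,M)$ with $Z_0$ independent of $M$; combined with the tightness of $M^N$ already established (and the trivial tightness of the constants $z^N_0$), it upgrades to $(z^N_0,M^N)\Rightarrow (Z_0,M)$ in $\R\times D([0,T],\R)$ with $Z_0\perp M$. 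Since $(c,f)\mapsto c+f$ is continuous from $\R\times D([0,T],\R)$ to $D([0,T],\R)$, the continuous mapping theorem gives $z^N = z^N_0 + M^N \Rightarrow Z_0 + M =: Z$, and $Z$ has exactly the announced law, with $Z_t - Z_0 = M_t$.

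\emph{Main obstacle.} The delicate point is the first paragraph: hypothesis (iii) only supplies an increasing process $i^N$ that is neither predictable nor assumed integrable, so one cannot directly quote the quadratic-variation form of the martingale CLT and must run the argument entirely with convergence and boundedness in probability --- hence the systematic use of Lenglart's inequality and of the Dini-type upgrade of (v) --- and one must likewise carry a localization through the exponential-martingale computation of the second paragraph. Once $i^N$ has been traded for $[M^N,M^N]$ and the conditional factorization over $\mathcal F^N_0$ is in place, the remaining steps (tightness, continuous mapping) are routine.
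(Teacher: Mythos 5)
Your proposal reaches the right conclusion, but it takes a genuinely different and noticeably heavier route than the paper. For the centered process $M^N_t=z^N_t-z^N_0$, the paper simply observes that hypotheses (ii)--(v) are \emph{exactly} condition (b) of Theorem 1.4, p.~339 of \cite{ek86}, with $A_N=i^N$: no passage to the optional quadratic variation is required. Your first paragraph (Dini upgrade of (v), Lenglart domination, transfer from $i^N$ to $[M^N,M^N]$) therefore redoes work the citation already covers, and it has a technical wrinkle you gloss over: since neither $i^N$ nor $[L^N,L^N]$ (with $L^N=[M^N,M^N]-i^N$) is predictable, Lenglart's inequality carries an extra term $\E[\sup_t\Delta A_t]$ for the dominating process; in the last application this is $\E[\sup_t(\Delta L^N_t)^2]$, which is \emph{not} controlled by the first moments in (ii) and (iv), so a truncation/stopping argument is needed (fixable, but not free). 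The real novelty of the theorem is the non-degenerate initial condition, and here the two proofs diverge. You re-derive asymptotic independence of $(z^N_0,M^N)$ by running the exponential-martingale/characteristic-function computation conditionally on $\mathcal{F}^N_0$; this amounts to re-proving the martingale CLT rather than citing it, and your key estimate --- that $\exp\bigl(i\theta M^N_t+\tfrac{\theta^2}{2}i^N_t\bigr)$ is a local martingale up to a remainder that is uniformly negligible in probability --- is asserted, not proved, and is precisely where the work lies (in particular because $i^N$ is not predictable). The paper avoids this entirely by a change of measure: for bounded continuous $\psi$ it tilts $\P$ into $\P_\psi\propto e^{\psi(z^N_0)}\,\P$, notes that the local-martingale structure and all of (ii)--(v) are unchanged (only the initial law is modified), applies the centered Ethier--Kurtz theorem under each $\P_\psi$, and then passes to the limit in $\E[e^{\psi(z^N_0)}F(z^N-z^N_0)]=\E_\psi[F(z^N-z^N_0)]\,\E[e^{\psi(z^N_0)}]\to \E[F(M)]\,\mu_0(e^\psi)$, which yields joint convergence with independence of $Z_0$ and $M$ without reopening any CLT machinery. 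So: your strategy is sound and standard, and the reattachment of the initial condition via conditional characteristic functions can be made rigorous, but as written its central estimate remains a sketch, whereas the paper's tilting argument gives the same conclusion as a short, complete reduction to the cited theorem.
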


\begin{proof}
First, we notice that Theorem $1.4$ with condition~$(b)$ in~\cite{ek86} is exactly the present result in the special case where $z_0^N=0$ and $\mu_0=\delta_0$. See also Section~$5$, Chapter~$7$ of~\cite{js03} in which, again, the case of a general initial condition is left to the reader. \medskip

Second, fix $\psi \in C_b(\R)$, and consider the $\P$-absolutely continuous probability defined by
$$ \P_\psi = \frac{1}{\E \b{  e^{\psi(z^N_0)} }} e^{\psi(z^N_0)}\ \P.$$
For any $\psi$, we claim that under $\P_\psi$ with the same filtration, all the assumptions of the present theorem hold for $t \mapsto z^N_t-z^N_0$ instead of $t \mapsto z^N_t$.\medskip

Indeed, first remark that since $\psi$ is bounded and since the probability $\P$ conditional on the initial $\sigma$-field is not modified, martingale properties under $\P$ still hold under under $\P_\psi$. The processes $t \mapsto z^N_t-z^N_0$ are thus local martingales under $\P_\psi$, with the same localizing stopping times. Since $\psi$ is bounded, the upper bound on jumps~$(ii)$ is satisfied. In addition, the process $t \mapsto \p{z^N_t-z^N_0}^2-i^N_t$ is still a local martingale and~$(iii)$ holds true. Again, since $\psi$ is bounded, the upper bound on jumps~$(iv)$ is satisfied. Finally, since $\psi$ is bounded, convergence in probability is independent of $\psi$, so that~$(v)$ is verified. \medskip

As a consequence, under each $\P_\psi$ with bounded $\psi$, the process $t \mapsto z^N_t-z^N_0$ converges in law under the Skorokhod topology to $(M_t)_{t \in [0,T]}$, a Gaussian martingale with initial value $M_0=0$ and variance function $i_t$. \medskip

Finally, let $F$ be a continuous functional on the Skorokhod space of c\`adl\`ag  paths, and $\psi$ a continuous bounded test function. Using the previous reasoning and assumption~$(i)$, we have that
\begin{align*}
 \E \b{ e^{\psi(z^N_0)} F(z^N_t-z^N_0, \, t \geq 0)} &= \E_\psi\b{ F(z^N_t-z^N_0, \, t \geq 0)} \E \b{e^{\psi(z^N_0)} } \\
 & \xrightarrow[N \to +\infty]{} \E \b{ F(M_t , \, t \geq 0 )  }\mu_0(e^\psi).
\end{align*}
Since $F$ and $\psi$ are arbitrary, the latter limit corresponds to the weak convergence of $(z^N_t)_{t \geq 0}$ towards $Z_t \eqdef Z_0 + M_t$, where $Z_0 \sim \mu_0$ and $(M_t)_{t \geq 0}$ are independent. This is exactly the desired result.
\end{proof}

\begin{Rem}
In other words, the limit Gaussian process $(Z_t)_{t \in [0,T]}$ is solution of the stochastic differential equation
$$\left\{\begin{array}{ll}
Z_0&\sim \mu_0\\
dZ_t&=\sqrt{i_t}\ dWt
\end{array}\right.$$
where $(W_t)_{t \in [0,T]}$ is a standard Brownian motion.
\end{Rem}

\begin{Pro}\label{lazicj} Under Assumption \ref{ass:E}, for any bounded $\ph$ such that Assumption~\ref{ass:Dp} is satisfied, the sequence of martingale $(z_t^N)_{0\leq t\leq T}$ defined by
 $$z_t^N=\sqrt{N}\left(\gamma_t^N(Q^{T-t}(\ph))-\gamma_0(Q^{T-t}(\ph))\right)$$
converges in law towards a Gaussian process $(Z_t)_{t \in [0,T]}$ with independent increments, initial distribution ${\cal N}(0,\Var_{\eta_0}(Q^T(\ph)))$ and variance function $\sigma_t^2(\ph)=\Var_{\eta_0}(Q^T(\ph))+i_t(\ph)$, with $i_t(\ph)$ defined by~\eqref{eq:var_1}.
 \end{Pro}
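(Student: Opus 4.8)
The plan is to apply the Martingale Central Limit Theorem, Theorem~\ref{aeichj}, to the sequence of \cadlag local martingales $t\mapsto z_t^N$ defined in the statement, after having reorganized all the preceding estimates into a verification of its five hypotheses $(i)$--$(v)$. The first and crucial reduction is to recognize, thanks to Lemma~\ref{lem:decomp} and the identity $\gamma_0(Q^{T-t}(\ph))=\gamma_0^N(Q^{T-t}(\ph))-\p{\gamma_0^N(Q^{T-t}(\ph))-\gamma_0(Q^{T-t}(\ph))}$ evaluated suitably, that $z_t^N$ differs from $\sqrt N\gamma_t^N(Q)$ only through the initial term; more precisely, $z_t^N=\int_0^t p^N_{u^-}\p{d\M_u+d\calM_u}+\sqrt N\p{\gamma_0^N(Q^{T-t}\ph)-\gamma_0(Q^{T-t}\ph)}$ must be handled carefully since the integrand $Q^{T-t}(\ph)$ itself depends on $t$. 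The cleanest route is to observe that $z_t^N$ is genuinely a local martingale: indeed $\gamma_t^N(Q)=\gamma_t^N(Q^{T-t}(\ph))$ is a local martingale by Lemma~\ref{lem:decomp}, while $t\mapsto\gamma_0(Q^{T-t}(\ph))=\eta_0(Q^T(\ph))$ is in fact \emph{constant} in $t$ (it equals $\gamma_T(\ph)$), so $z_t^N=\sqrt N\p{\gamma_t^N(Q)-\gamma_T(\ph)/\text{?}}$ — one must be slightly careful here, but the upshot is $z_t^N=\sqrt N\p{\gamma_t^N(Q^{T-t}\ph)-\eta_0(Q^T\ph)}$ and its \cadlag local martingale property follows from localizing along $(\tau_j)$ exactly as in the Remark preceding Lemma~\ref{lem:decomp}.

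Next I would check the five conditions in order. Condition $(i)$: $z_0^N=\sqrt N\p{\eta_0^N(Q^T\ph)-\eta_0(Q^T\ph)}$, and since the initial particles are i.i.d.\ $\sim\eta_0$, the classical (finite-dimensional) CLT gives $z_0^N\xrightarrow{\calD}\calN(0,\Var_{\eta_0}(Q^T(\ph)))=:\mu_0$; this is where the i.i.d.\ hypothesis is used, and Remark~\ref{aljcneij} explains how to weaken it. Condition $(ii)$, the uniform smallness of the squared jumps: by Corollary~\ref{mzoecjleijf}, $|\Delta\gamma_t^N(Q)|\le 2\|\ph\|_\infty/N$, hence $\sup_{t\le T}|\Delta z_t^N|^2\le 4\|\ph\|_\infty^2/N\to 0$ deterministically, so the expectation certainly vanishes. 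Condition $(iii)$: take $i_t^N$ to be the process $N[\gamma^N(Q),\gamma^N(Q)]_t$, or equivalently — and this is the point of Definition~\ref{def:i} and Lemma~\ref{lem:i_mart} — the more tractable increasing process $i_t^N$ of~\eqref{indef}, which differs from $N[\gamma^N(Q),\gamma^N(Q)]_t$ by a local martingale; then $(z_t^N-z_0^N)^2-N[\gamma^N(Q),\gamma^N(Q)]_t$ is a local martingale by definition of the quadratic variation, and subtracting the local martingale of Lemma~\ref{lem:i_mart} shows $(z_t^N-z_0^N)^2-i_t^N$ is a local martingale as required. Condition $(iv)$, smallness of $\sup_t\Delta i_t^N$ in expectation: using the alternative formula $\Delta i^N_{\tau_j}=\p{p^N_{\tau_j^-}}^2\frac{(1-1/N)^2}{N}\Var_{\eta^{(n)}_{\tau_j^-}}(Q)$ displayed after Definition~\ref{def:i}, and the bound $\Var_{\eta^{(n)}}(Q)\le\|\ph\|_\infty^2$ together with $p^N\le 1$, one gets $\Delta i_t^N\le\|\ph\|_\infty^2/N$ deterministically, so again the expectation tends to $0$. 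Condition $(v)$ is precisely Proposition~\ref{almchi}: $i_t^N\xrightarrow{\P}i_t(\ph)$, and $t\mapsto i_t(\ph)=\Var_{\eta_0}(Q^T\ph)$-free part plus the integral terms is continuous and increasing — continuity following from that of $t\mapsto p_t$ (Lemma~\ref{lem:pcont}) and of $t\mapsto\gamma_t(Q^2)$, and monotonicity being inherited from $i_t^N$ being increasing and the limit of increasing functions being increasing.

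Having verified all hypotheses, Theorem~\ref{aeichj} yields that $(z_t^N)_{t\in[0,T]}$ converges in law, in the Skorokhod topology, to a process $(Z_t)_{t\in[0,T]}$ with $Z_0\sim\calN(0,\Var_{\eta_0}(Q^T\ph))$ and $(Z_t-Z_0)_{t\in[0,T]}$ a centered Gaussian process with independent increments, independent of $Z_0$, and variance function $i_t(\ph)$; hence $Z_t$ is Gaussian with variance $\sigma_t^2(\ph)=\Var_{\eta_0}(Q^T\ph)+i_t(\ph)$, which is exactly the claim. I expect the main obstacle to be the bookkeeping around condition $(v)$, i.e.\ making sure that Proposition~\ref{almchi} really delivers \emph{both} the convergence in probability \emph{and} the continuity/monotonicity of the limit $i_t(\ph)$ needed by Theorem~\ref{aeichj}; the continuity of $t\mapsto i_t(\ph)$ must be argued from the continuity of $t\mapsto p_t$ and of $t\mapsto\gamma_t(Q^{T-t}(\ph))$, which in turn rests on Assumption~\ref{ass:D}$(ii)$ (the joint continuity of $(x,t)\mapsto Q^t\ph(x)$) — this is the one place where the "atomless" hypotheses are doing real analytic work rather than just guaranteeing well-posedness of the branching rule. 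A secondary but more routine nuisance is the correct identification of the initial term: one must be careful that $z_t^N$ as written is a martingale \emph{in $t$} even though the test function $Q^{T-t}(\ph)$ slides, and the resolution is simply the observation that $\gamma_0(Q^{T-t}(\ph))=\eta_0(Q^T(\ph))$ does not depend on $t$.
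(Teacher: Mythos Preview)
Your approach is essentially the paper's own: verify the five hypotheses of Theorem~\ref{aeichj} for $z_t^N$, citing Corollary~\ref{mzoecjleijf} for~(ii), Lemma~\ref{lem:i_mart} for~(iii), and Proposition~\ref{almchi} for~(v). Two small points deserve correction.

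\textbf{Condition (iv).} The ``alternative formula'' displayed after Definition~\ref{def:i} rewrites only the last two terms of $i_t^N$, namely $-\int\Var_{\eta^N_{u^-}}(Q)p^N_{u^-}dp^N_u+\frac1N\int(p^N_{u^-})^2d\calR_u$. It does \emph{not} include the first term $\int(p^N_{u^-})^2 d\A_u$. Hence your expression $\Delta i^N_{\tau_j}=(p^N_{\tau_j^-})^2\frac{(1-1/N)^2}{N}\Var_{\eta^{(n)}_{\tau_j^-}}(Q)$ is missing the contribution $(p^N_{t^-})^2\Delta\A_t$, and moreover $\A$ can jump at \emph{non}-branching times (jump times of some $\L^n$), where the other two terms are constant. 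The paper instead bounds the three terms of~\eqref{indef} separately, using $\Delta\A_t\le\|\ph\|_\infty^2/N$ from~\eqref{ajump}, $|\Delta p_t^N|\le 1/N$, and~\eqref{lemab2} for $\Delta\calR_t$; this gives $\sup_t\Delta i_t^N\le 2\|\ph\|_\infty^2/N+14\|\ph\|_\infty^2/N^2$. Your conclusion survives once you add the $\Delta\A_t$ piece.

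\textbf{The constant shift.} Your first paragraph wrestles with whether $t\mapsto\gamma_0(Q^{T-t}\ph)$ is constant. It is not: $\gamma_0(Q^{T-t}\ph)=\eta_0(Q^{T-t}\ph)=\gamma_{T-t}(\ph)$, which varies with $t$. The statement of the proposition is slightly loose here; what is actually meant (and what the paper uses in its proof) is the constant $\gamma_0(Q^T\ph)=\gamma_T(\ph)$, so that $z_t^N=\sqrt N(\gamma_t^N(Q)-\gamma_T(\ph))$ is a genuine local martingale by Lemma~\ref{lem:decomp}. Once this is understood your decomposition is fine; just drop the stray ``$\gamma_T(\ph)/\text{?}$'' and the incorrect identity $\gamma_0(Q^{T-t}\ph)=\eta_0(Q^T\ph)$.

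Your remark that continuity and monotonicity of $t\mapsto i_t(\ph)$ must be checked for hypothesis~(v) is well taken; the paper is silent on this. Monotonicity is inherited from the $i_t^N$, and continuity follows from~\eqref{eq:var_1} once one observes that $p_t$ is continuous (Lemma~\ref{lem:pcont}), that $\gamma_t(Q)=\gamma_T(\ph)$ is constant, and that $t\mapsto\gamma_t(Q^2)=\E\big[(Q^{T-t}\ph(X_t))^2\un_{t<\tau_\partial}\big]$ is continuous because the bounded martingale $t\mapsto Q^{T-t}\ph(X_t)$ has no fixed jump times under Assumption~\ref{ass:Dp}.
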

 
\begin{proof}
We just have to check that the assumptions of Theorem \ref{aeichj} are satisfied in our framework. Before proceeding, let us remind that since $\ph$ belongs to $\calD$, it is necessarily bounded.
\begin{enumerate}[(i)]
\item Recall that $(X_0^1,\ldots,X_0^N)$ are i.i.d.~with law $\eta_0=\gamma_0$, so that clearly
$$z_0^N=\sqrt{N}\left(\gamma_0^N(Q^{T}(\ph))-\gamma_0(Q^{T}(\ph))\right)\xrightarrow[N\to+\infty]{\cal D}{\cal N}(0,\Var_{\eta_0}(Q^T(\ph))).$$
\item This is a simple consequence of Corollary \ref{mzoecjleijf}.
\item This is the purpose of Lemma~\ref{lem:i_mart}.
\item By Definition \ref{def:i}, we have
$$i^N_t=\int_0^t\p{p^N_{u^-}}^2 d\A_u-\int_0^t\Var_{ \eta^{N}_{u^-}}(Q)p^N_{u^-}dp^N_u+\frac1N  \int_0^t \big(p^N_{u^-}\big)^2 d \calR_u,$$
so that
$$\Delta i^N_t\leq\Delta\A_t+\|\ph\|_\infty^2|\Delta p^N_t|+\frac1N|\Delta\calR_t|.$$ 
It remains to see that $|\Delta p^N_t|\leq 1/N$ and to apply the bounds given in (\ref{ajump}) and (\ref{lemab2}) to deduce that
$$\E\left[\sup_{0\leq t\leq T}\Delta i^N_t\right]\leq\frac{2\|\ph\|_\infty^2}{N}+\frac{14\|\ph\|_\infty^2}{N^2}\xrightarrow[N\to+\infty]{}0.$$
 \item This last - and most important - point is exactly Proposition~\ref{almchi}.
 \end{enumerate}
 \end{proof}

Let us assume in the following discussion that Assumption \ref{ass:E} is satisfied. If we marginalize on the final time, we obtain that, for any bounded $\ph$ such that Assumption~\ref{ass:Dp} is 
 satisfied,
$$\sqrt{N}\left(\gamma_T^N(\ph)-\gamma_T(\ph)\right)\xrightarrow[N\to\infty]{\cal D}{\cal N}(0,\sigma_T^2(\ph)).$$
In fact, we can extend this result to any function $\ph$ in the $\norm{\cdot}_\infty$-closure $\overline{\cal D}$ of $\calD$, and thus establish Proposition \ref{techprop}, and in turn Theorem \ref{gamma}. 

\begin{Lem}\label{alizhcaeoichj} 
Under Assumptions~\ref{ass:Dp} and \ref{ass:E}, for any $\ph \in \overline{\cal D}$, we have
\begin{align*}
\E \b{ \p{ \gamma^N_T(\ph) - \gamma_T(\ph) }^2 } \leq \frac{18 \norm{\ph}_\infty^2}{N}.
\end{align*}
\end{Lem}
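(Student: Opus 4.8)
The strategy is a standard density argument: Proposition~\ref{pro:estimate} already gives the bound $\E[(\gamma^N_T(\ph)-\gamma_T(\ph))^2]\leq 6\norm{\ph}_\infty^2/N$ for $\ph\in\calD$, and the goal is to upgrade this to the closure $\overline{\calD}$ with a slightly larger constant. First I would fix $\ph\in\overline{\calD}$ and pick a sequence $(\ph_k)_{k\geq 1}$ in $\calD$ with $\norm{\ph-\ph_k}_\infty\to 0$. By linearity of all the relevant objects in the test function, one can write
\begin{align*}
\gamma^N_T(\ph)-\gamma_T(\ph) = \big(\gamma^N_T(\ph_k)-\gamma_T(\ph_k)\big) + \big(\gamma^N_T(\ph-\ph_k)-\gamma_T(\ph-\ph_k)\big),
\end{align*}
so by the elementary inequality $(a+b)^2\leq 2a^2+2b^2$ it suffices to control each piece.

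\textbf{Key steps.} The first piece is handled directly by Proposition~\ref{pro:estimate}, giving $\E[(\gamma^N_T(\ph_k)-\gamma_T(\ph_k))^2]\leq 6\norm{\ph_k}_\infty^2/N$, and $\norm{\ph_k}_\infty\to\norm{\ph}_\infty$. For the second piece, I would bound $\gamma^N_T$ and $\gamma_T$ separately in terms of the sup norm of the test function: one has $|\gamma^N_T(\psi)| = |p^N_T\eta^N_T(\psi)| \leq \norm{\psi}_\infty$ since $p^N_T\in(0,1]$ and $\eta^N_T$ is a probability measure, and likewise $|\gamma_T(\psi)| = |\eta_0(Q^T\psi)| \leq \norm{\psi}_\infty$. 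Hence $|\gamma^N_T(\ph-\ph_k)-\gamma_T(\ph-\ph_k)| \leq 2\norm{\ph-\ph_k}_\infty$, which is deterministic and tends to $0$. Combining, for every $k$,
\begin{align*}
\E\b{\p{\gamma^N_T(\ph)-\gamma_T(\ph)}^2} \leq \frac{12\norm{\ph_k}_\infty^2}{N} + 8\norm{\ph-\ph_k}_\infty^2.
\end{align*}
Letting $k\to\infty$ sends the second term to $0$ and the first to $12\norm{\ph}_\infty^2/N$. One subtlety: the bound must hold for each fixed $N$, so I would not let $k$ and $N$ go to infinity together; rather, since the left-hand side does not depend on $k$ and the right-hand side converges to $12\norm{\ph}_\infty^2/N$ as $k\to\infty$, the left-hand side is $\leq 12\norm{\ph}_\infty^2/N$. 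This is already better than the claimed $18$, so one could either sharpen the statement or simply absorb the slack (e.g. using $\norm{\ph_k}_\infty\leq \norm{\ph}_\infty+\norm{\ph-\ph_k}_\infty$ crudely gives a transient factor that still collapses to $12$).

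\textbf{Main obstacle.} There is essentially no hard step here; the only thing to be careful about is the order of limits, ensuring that the estimate is uniform in $N$ and that one exploits the $N$-independence of the left-hand side rather than attempting a joint limit. Everything else is the linearity of $\gamma^N_T$ and $\gamma_T$ in the test function together with the trivial sup-norm bounds $|\gamma^N_T(\psi)|\leq\norm{\psi}_\infty$ and $|\gamma_T(\psi)|\leq\norm{\psi}_\infty$. Once this lemma is in hand, combined with Proposition~\ref{lazicj} extended to $\overline{\calD}$ by the same density argument at the level of convergence in distribution, one obtains Proposition~\ref{techprop} and hence Theorem~\ref{gamma}.
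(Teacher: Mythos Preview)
Your proposal is correct and follows essentially the same density argument as the paper: approximate $\ph$ by $\ph_k\in\calD$, apply Proposition~\ref{pro:estimate} to $\ph_k$, and control the remainder via the trivial bounds $|\gamma^N_T(\psi)|\leq\norm{\psi}_\infty$ and $|\gamma_T(\psi)|\leq\norm{\psi}_\infty$. The only cosmetic difference is that the paper splits into three terms and uses $(a+b+c)^2\leq 3(a^2+b^2+c^2)$, which is why the constant $18=3\times 6$ appears, whereas your two-term split with $(a+b)^2\leq 2(a^2+b^2)$ gives the sharper constant~$12$.
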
 

\begin{proof} For any $\ph$ in $\overline{\cal D}$, consider a sequence $(\ph_n)$ in $\cal D$ converging to $\ph$ with respect to the supremum norm. In particular, $(\|\ph_n\|_\infty)$ goes to $\|\ph\|_\infty$. Since $|\gamma_T(f)|\leq\|f\|_\infty$ and $|\gamma_T^N(f)|\leq\|f\|_\infty$, we have
\begin{align*}
\p{ \gamma^N_T(\ph) - \gamma_T(\ph) }^2&\leq 3\left\{ \gamma^N_T(\ph-\ph_n) ^2+\p{ \gamma^N_T(\ph_n) - \gamma_T(\ph_n) }^2+ \gamma_T(\ph_n-\ph) ^2\right\}\\
&\leq 3\p{ \gamma^N_T(\ph_n) - \gamma_T(\ph_n) }^2+6\|\ph-\ph_n\|_\infty^2.
\end{align*}
Now, Proposition \ref{pro:estimate} implies
$$\E \b{ \p{ \gamma^N_T(\ph) - \gamma_T(\ph) }^2 } \leq \frac{18 \norm{\ph_n}_\infty^2}{N}+6\|\ph-\ph_n\|_\infty^2\xrightarrow[n\to+\infty]{}\frac{18 \norm{\ph}_\infty^2}{N}.$$
\end{proof}

Since \ref{ass:D} implies \ref{ass:Dp} by Lemma \ref{mzoecj}, the next result is exactly Theorem \ref{gamma}.

\begin{Cor}\label{corbis}
Under Assumptions~\ref{ass:Dp} and \ref{ass:E}, for any $\ph \in \overline{\cal D}$, one has 
$$\sqrt{N}\left(\gamma_T^N(\ph)-\gamma_T(\ph)\right)\xrightarrow[N\to\infty]{\cal D}{\cal N}(0,\sigma_T^2(\ph)).$$
 \end{Cor}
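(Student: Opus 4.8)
The plan is to obtain the statement from the functional martingale CLT of Proposition~\ref{lazicj} by marginalising it at the final time, and then extending from $\mathcal{D}$ to its $\norm{\cdot}_\infty$-closure by a density argument.

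First I would treat $\ph\in\mathcal{D}$. Such a $\ph$ satisfies Assumption~\ref{ass:Dp}, so Proposition~\ref{lazicj} applies and says that $t\mapsto z_t^N$ converges in law, for the Skorokhod topology, to a Gaussian process whose variance at time $T$ is $\sigma_T^2(\ph)=\Var_{\eta_0}(Q^T(\ph))+i_T(\ph)$. Evaluating the endpoint functional $x\mapsto x(T)$ --- which is continuous on the Skorokhod space, since admissible time changes fix the right endpoint, so the continuous mapping theorem applies --- and using $z_T^N=\sqrt N(\gamma_T^N(\ph)-\gamma_T(\ph))$ together with the preceding lemma, which rewrites $\Var_{\eta_0}(Q^T(\ph))+i_T(\ph)$ in the closed form~(\ref{eq:var_2}), I get $\sqrt N(\gamma_T^N(\ph)-\gamma_T(\ph))\xrightarrow[N\to\infty]{\mathcal{D}}\mathcal{N}(0,\sigma_T^2(\ph))$ for every $\ph\in\mathcal{D}$.

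Next I would pass to $\ph\in\overline{\mathcal{D}}$. Choose $(\ph_n)\subset\mathcal{D}$ with $\norm{\ph_n-\ph}_\infty\to0$; note $\ph-\ph_n\in\overline{\mathcal{D}}$ and that $\ph\mapsto\gamma_T^N(\ph)$, $\ph\mapsto\gamma_T(\ph)$ are linear. Writing $W_N\eqdef\sqrt N(\gamma_T^N(\ph)-\gamma_T(\ph))$ and $W_{N,n}\eqdef\sqrt N(\gamma_T^N(\ph_n)-\gamma_T(\ph_n))$, I have $W_{N,n}\xrightarrow[N\to\infty]{\mathcal{D}}\mathcal{N}(0,\sigma_T^2(\ph_n))$ from the first step, while Lemma~\ref{alizhcaeoichj} applied to $\ph-\ph_n$ gives $\E[(W_N-W_{N,n})^2]\le 18\norm{\ph-\ph_n}_\infty^2$, a bound \emph{uniform in $N$}; by Markov's inequality $\sup_N\P(|W_N-W_{N,n}|>\eps)\le 18\norm{\ph-\ph_n}_\infty^2/\eps^2\to0$ as $n\to\infty$. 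Combined with $\sigma_T^2(\ph_n)\to\sigma_T^2(\ph)$, the standard ``converging together'' argument for convergence in distribution then yields $W_N\xrightarrow[N\to\infty]{\mathcal{D}}\mathcal{N}(0,\sigma_T^2(\ph))$, which is Corollary~\ref{corbis}; and since \ref{ass:D} implies \ref{ass:Dp} by Lemma~\ref{mzoecj}, it also gives Theorem~\ref{gamma} (hence Corollary~\ref{eta}).

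As for the two ingredients invoked: the uniform $L^2$-control is exactly Lemma~\ref{alizhcaeoichj} and needs nothing new, while the $\norm{\cdot}_\infty$-continuity of $\ph\mapsto\sigma_T^2(\ph)$ follows from~(\ref{eq:var_2}) using $\norm{Q^{T-t}(\psi)}_\infty\le\norm\psi_\infty$, the elementary bound $|\Var_\mu(f)-\Var_\mu(g)|\le 2\norm{f-g}_\infty(\norm f_\infty+\norm g_\infty)$ valid for any probability $\mu$, the finiteness $\int_0^T p_t(-dp_t)=(1-p_T^2)/2$, and dominated convergence for the time integral. I do not expect a serious obstacle here: all the genuine analytic difficulty --- convergence of the approximate quadratic variation $i^N_t$ and verification of the hypotheses of the martingale CLT --- is already encapsulated in Proposition~\ref{lazicj} and Lemma~\ref{alizhcaeoichj}, so this corollary is essentially a soft approximation wrap-up, the only point genuinely requiring care being that the $L^2$ bound of Lemma~\ref{alizhcaeoichj} holds uniformly in $N$, which is what makes the density argument legitimate.
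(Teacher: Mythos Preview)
Your proposal is correct and follows essentially the same approach as the paper: first marginalise Proposition~\ref{lazicj} at time $T$ to get the CLT for $\ph\in\calD$, then extend to $\overline{\calD}$ by a density argument combining the uniform $L^2$ bound of Lemma~\ref{alizhcaeoichj} with the $\norm{\cdot}_\infty$-continuity of $\sigma_T^2(\cdot)$. The only cosmetic difference is that the paper carries out the ``converging together'' step by hand via the triangle inequality for bounded Lipschitz test functions and Portmanteau, whereas you invoke the abstract converging-together lemma; both are equivalent.
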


\begin{proof}
We will use  the simplified version~\eqref{eq:var_2} of the asymptotic variance. 
Let us denote by $\Phi$ any bounded Lipschitz function, $G$ a centered Gaussian variable with variance $\sigma_T^2(\varphi)$ for an arbitrary  function $\varphi\in\overline{\cal D}$. \medskip

For any  $\varepsilon>0$, we can find $\varphi_\varepsilon$ in $ \calD$ such that $\|\varphi-\varphi_\varepsilon\|_\infty\leq\varepsilon$. We can also assume that $\gamma_T(\varphi_\varepsilon)=\gamma_T(\varphi)$. Note that we can also choose $\varphi_\varepsilon$ such that $|\sigma_T^2(\varphi_\varepsilon)-\sigma_T^2(\varphi)|\leq \varepsilon$. Indeed, it is easy to check by dominated convergence that $\varphi\mapsto\sigma_T^2(\varphi)$ is continuous for the norm $\|\cdot \|_\infty$. Hence, let us denote by $G_\varepsilon$ a centered Gaussian variable with variance $\sigma_T^2(\varphi_\varepsilon)$.\medskip

Then we may write
\begin{align*}
|\E[\Phi(\sqrt{N}&(\gamma_T^N(\varphi)-\gamma_T(\varphi))]-\E[\Phi(G)]|\\
\leq&\ \E[|\Phi(\sqrt{N} (\gamma_T^N(\varphi)-\gamma_T(\varphi)))-\Phi(\sqrt{N} ( \gamma_T^N(\varphi_\varepsilon)-\gamma_T(\varphi)))|]\\
&+|\E[\Phi(\sqrt{N} ( \gamma_T^N(\varphi_\varepsilon)-\gamma_T(\varphi)))]-\E[\Phi(G_\varepsilon)]|\\
&+|\E[\Phi(G_\varepsilon)]-\E[\Phi(G)]|.
\end{align*}

For the first term, by Lemma \ref{alizhcaeoichj}, Jensen's inequality and remembering that $\gamma_T(\ph-\varphi_\varepsilon)=0$, we have
$$\E|\Phi(\sqrt{N} (\gamma_T^N(\varphi)-\gamma_T(\varphi)))-\Phi(\sqrt{N} ( \gamma_T^N(\varphi_\varepsilon)-\gamma_T(\varphi)))|\leq3\sqrt{2}\|\Phi\|_{\rm Lip}\|\varphi-\varphi_\varepsilon\|_\infty.$$
Hence, for any given $\delta>0$, we can choose $\varepsilon$ such that this first term is less than $\delta$. Clearly, the same property holds for the third term as well. Besides, since $\ph_\varepsilon$ is in $\calD$, for $N$ large enough, the second term can also be made less than $\delta$ by Corollary \ref{lazicj}. As this result holds for any bounded Lipschitz function $\Phi$, we conclude using the Portmanteau theorem.

\end{proof}

 \begin{Rem}
  This corollary is particularly useful in practice: to obtain the CLT associated with any observable $\ph$, it is sufficient to check Assumption~\ref{ass:D} or~\ref{ass:Dp} for appropriately regularized functions. 
 \end{Rem}

\section{Appendix}

\subsection{Preliminary on Feller processes}\label{proufefellerd}
In this section, we recall the definition and some properties of Feller processes (see also for example Section~$17$ of~\cite{Kall}).

\begin{Def} Let $E$ be a locally compact Polish space. Let $C_0(E)$ denote the space of continuous functions that vanish at infinity. A \cadlag time-homogeneous  process in $E$ is Feller if and only if each of its probability transitions maps $C_0(E)$ into itself. Formally: for all $\ph\in C_0(E)$ and $t \geq 0$, $z \mapsto \E[\ph(Z_t) | Z_0=z] \in C_0(E)$, where $(Z_t)_{t \geq 0}$ denotes the Markov process constructed with any given initial condition $Z_0=z \in E$.
\end{Def}

Feller processes enjoy many useful standard properties including: (i) The associated natural filtration $\calF^Z_t \eqdef \sigma\p{Z_{t'}, \, 0 \leq t' \leq t}$ is right-continuous; (ii) $Z$ is strong Markov with respect to $\calF^Z$; %
(iii) $Z$ is quasi-left continuous with respect to $\calF^Z$. A characterization of quasi-left continuity is the following (\cite{js03}, Proposition~$2.26$): if $(\tau_n)_{n \geq 1}$ is any increasing sequence of stopping times, then on the event $\set{ \lim_n \tau_n < + \infty }$, one has $\lim_n Z_{\tau_n} =  Z_{\lim_n \tau_n}$. Note that taking deterministic sequences implies that quasi-left continuous processes never jump at deterministic times. \medskip

We will need a slightly less standard property of Feller processes related to the so-called Skorokhod $J_1$ topology as defined in the following proposition.

\begin{Pro}[$J_1$ topology]
Let $d$ be a metric of the Polish topology of $E$. Let $\D_E$ denote the space of \cadlag maps from $\R_+$ to $E$. There is a Polish topology on $\D_E$, called the Skorokhod $J_1$ topology, characterized by the following property: $\lim_n (z^n_t)_{t \geq 0} = (z_t)_{t \geq 0}$ in $\D_E$ if and only if there is a sequence $(\lambda^n)_{n \geq 0}$ of increasing one-to-one maps of $\R_+$ onto itself such that for each $t_0 \geq 0$
$$\lim_n \sup_{0 \leq t \leq t_0} d \p{z^n_{\lambda^n(t)},z_t} =\lim_n \sup_{0 \leq t \leq t_0} \abs{\lambda^n(t)-t} =0.$$
\end{Pro}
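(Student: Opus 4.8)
This is the classical construction of the Skorokhod $J_1$ topology on $\D_E$, so the plan is to (a) write down an explicit complete and separable metric $\rho$ on $\D_E$, (b) check that $\rho$-convergence of sequences is exactly the convergence described in the statement via the time changes $\lambda^n$, and (c) observe that, the topology being metrizable, this sequential characterization pins it down uniquely. The two classical inputs are Billingsley's treatment of $J_1$ on a compact time interval and its localization to $\R_+$; see \cite{ek86}, Chapter~3, Section~5, \cite{js03}, Chapter~VI, or \cite{Kall}, Chapter~16.

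First I would replace $d$ by the bounded metric $q \eqdef d\wedge 1$, let $\Lambda$ be the set of (automatically continuous, strictly increasing) bijections $\lambda:\R_+\to\R_+$, set $\gamma(\lambda) \eqdef \sup_{0\leq s<t}\abs{\log\tfrac{\lambda(t)-\lambda(s)}{t-s}}\in[0,+\infty]$, and define
$$\rho(z,z') \eqdef \inf_{\lambda\in\Lambda}\p{\gamma(\lambda)\ \vee\ \int_0^{\infty} e^{-u}\,\sup_{t\geq 0} q\p{z_{t\wedge u},\,z'_{\lambda(t)\wedge u}}\,du}.$$
Then I would verify the routine facts: $\rho$ is a metric (symmetry from $\lambda\mapsto\lambda^{-1}$, triangle inequality from composing time changes, and $\rho(z,z')=0\Rightarrow z=z'$ from right-continuity); $\D_E$ is $\rho$-separable (approximate an arbitrary \cadlag path, uniformly on compacts, by piecewise constant paths with rational jump times and values in a countable dense subset of $E$); and $(\D_E,\rho)$ is $\rho$-complete (from a Cauchy sequence, extract a fast subsequence, compose the corresponding near-optimal time changes into a limiting one using summability of the $\gamma$'s, and use completeness of $E$ together with the fact that a uniform-on-compacts limit of \cadlag paths, read along such a time change, is again \cadlag). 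This produces a Polish topology on $\D_E$.

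It then remains to match $\rho$-convergence with the stated property. The implication ``$\rho(z^n,z)\to 0$ $\Rightarrow$ the displayed property'' is easy: picking $\lambda^n$ achieving the infimum up to $1/n$ gives $\gamma(\lambda^n)\to 0$, hence $\sup_{t\leq t_0}\abs{\lambda^n(t)-t}\to 0$ for every $t_0$, together with $\sup_{t\leq t_0} q(z^n_t,z_{\lambda^n(t)})\to 0$; replacing $\lambda^n$ by $(\lambda^n)^{-1}$ brings this to the asserted form $z^n_{\lambda^n(t)}$ versus $z_t$. The converse is where the real work lies, and is the step I expect to be the main obstacle: from time changes $\lambda^n$ satisfying only $\sup_{t\leq t_0}\abs{\lambda^n(t)-t}\to 0$ (and the path estimate) for every $t_0$, one must manufacture possibly different $\mu^n\in\Lambda$ with $\gamma(\mu^n)\to 0$ and the same control, so that $\rho(z^n,z)\to 0$. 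On a fixed window $[0,t_0]$ this is Billingsley's device: $z$ has only finitely many jumps of size $\geq\eps$ there, so it is within $\eps$ of a step function with those break points, and one replaces $\lambda^n$ on $[0,t_0]$ by the piecewise linear map interpolating its values at those finitely many points; this map has vanishing log-slope yet stays uniformly close to $\lambda^n$, so $z^n\circ\mu^n$ remains within $O(\eps)$ of $z$. The genuine difficulty is the bookkeeping needed to glue these windowed constructions over $t_0=1,2,\dots$ into a single $\mu^n\in\Lambda$ with $\gamma(\mu^n)\to 0$ globally, i.e.\ controlling the slope across the interfaces; this is routine but is the one step I would write out in detail, following the localization argument of \cite{ek86}, Chapter~3, Section~5. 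Finally, since $\rho$ metrizes a Polish topology whose convergent sequences have just been identified, the ``$J_1$ topology'' is well defined and has the asserted characterization.
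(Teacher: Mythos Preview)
The paper does not prove this proposition at all: it is stated as a classical fact (essentially a definition plus a well-known theorem) and used without proof, with the implicit reference being the standard textbooks already cited elsewhere in the appendix (\cite{ek86}, \cite{js03}, \cite{Kall}). So there is nothing in the paper to compare your argument against.

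That said, your outline is the correct and standard one, and it matches precisely what those references do. A couple of minor remarks. First, your reduction to the bounded metric $q=d\wedge 1$ is harmless for the topology but you should say a word about why the characterization in the statement, which is phrased with the original $d$, is unaffected: on each compact time window the supremum is finite and the equivalence $d\to 0 \Leftrightarrow q\to 0$ is all that is needed. Second, your honest flag on the gluing step is well placed; in Ethier--Kurtz this is handled not by literally gluing piecewise-linear maps across windows but by a single global construction using the finitely many large jumps of $z$ in $[0,u]$ and then letting $u$ run, which avoids the interface bookkeeping you anticipate. Following their argument verbatim would shorten that part. Otherwise your sketch is sound.
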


If $Z$ is Feller, the distribution of $(Z_t)_{t \geq 0} \in (\D_E,J_1)$ is continuous with respect to its initial condition $Z_0=z$. This is detailed in the following lemma.

\begin{Lem}\label{lem:cont_feller}
Let $\D_E$ denote the space of \cadlag trajectories endowed with the Skorokhod $J_1$ topology, and let $\p{Z^{z}_{s}}_{s \geq 0}$ denote a Feller process with initial condition $Z_0=z$. The map $ z \mapsto {\cal L}{\p{Z^{z}_{t}}_{t \geq 0}}$ defined from $E$ to probabilities on $\D_E$, endowed with convergence in distribution, is continuous.
\end{Lem}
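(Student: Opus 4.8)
The plan is to reduce the statement to the analogous statement for the underlying Feller process $\widetilde Z$, which is the classical fact that the finite-dimensional distributions of a Feller process depend continuously on the initial condition, and then upgrade from finite-dimensional convergence to convergence in the Skorokhod $J_1$ topology using tightness. Fix a sequence $z_n \to z$ in $E$, and let $\mu_n \eqdef {\cal L}\p{(Z^{z_n}_t)_{t \geq 0}}$ and $\mu \eqdef {\cal L}\p{(Z^{z}_t)_{t \geq 0}}$ be the associated laws on $\D_E$. I would show $\mu_n \to \mu$ weakly in $(\D_E, J_1)$ in two steps: first, the sequence $(\mu_n)$ is tight; second, every weak limit point of $(\mu_n)$ equals $\mu$, because the finite-dimensional distributions converge.

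For the finite-dimensional distributions, fix $0 \leq t_1 < \cdots < t_k$ and bounded continuous $f_1, \ldots, f_k$ on $E$. By the Markov property, $\E\b{\prod_{i=1}^k f_i(Z^{z_n}_{t_i})}$ is obtained by iterating the transition semigroup: it equals $\p{P^{t_1}\p{f_1 \cdot P^{t_2 - t_1}\p{f_2 \cdots P^{t_k - t_{k-1}} f_k}}}(z_n)$, where $P^t$ denotes the transition operator of $\widetilde Z$. The Feller property ensures each $P^s$ maps $C_0(E)$, and more generally bounded continuous functions, to bounded continuous functions (for the $C_b$ statement one uses that $P^s$ maps $C_0$ to $C_0$ together with $P^s \un = \un$, or invokes the standard strong-continuity/continuity properties of Feller semigroups); hence the whole composite is a bounded continuous function of the starting point, and its value at $z_n$ converges to its value at $z$. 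This gives convergence of all finite-dimensional distributions at times that are fixed deterministic points; since a Feller process is quasi-left continuous and in particular a.s.\ continuous at each fixed time, these determine a dense set of ``continuity times'' of the limit, which is what is needed to identify the $J_1$ limit.

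For tightness in $(\D_E, J_1)$ I would apply a standard Aldous/Jakubowski-type criterion: one needs (a) a compact containment condition — for each $\eta > 0$ and $t_0$, a compact $K \subset E$ with $\inf_n \P\p{Z^{z_n}_t \in K \ \forall t \leq t_0} \geq 1 - \eta$ — and (b) control of the modulus of continuity, typically via $\lim_{\delta \to 0}\limsup_n \sup_\tau \E\b{ 1 \wedge d(Z^{z_n}_{\tau + \delta}, Z^{z_n}_\tau)} = 0$ over stopping times $\tau \leq t_0$. Both follow from Feller-process estimates that are uniform over initial conditions ranging in the relatively compact set $\{z_n\} \cup \{z\}$: the compact containment uses tightness of one-dimensional marginals (again a consequence of the Feller property and Prokhorov, applied to the relatively compact family of starting points) together with a maximal inequality, and (b) follows from the strong Markov property and the fact that, uniformly for starting points in a compact set, $\sup_{y \in K'} \E\b{1 \wedge d(Z^y_\delta, y)} \to 0$ as $\delta \to 0$, which is exactly the continuity at $0$ of the Feller semigroup in a form uniform on compacts. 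Once tightness plus convergence of finite-dimensional distributions is in hand, Prokhorov's theorem and the usual subsequence argument give $\mu_n \to \mu$, proving the lemma.

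The main obstacle is step (b), the uniform-in-initial-condition modulus of continuity estimate: the Feller property as stated (transition operators preserve $C_0(E)$) gives continuity in the starting point for each \emph{fixed} time but does not by itself obviously yield the equicontinuity in $\delta$ needed for tightness. The way around this is to note that for a Feller process the semigroup is strongly continuous on $C_0(E)$, so $\|P^\delta f - f\|_\infty \to 0$ for $f \in C_0(E)$, and by localizing with a compactly supported bump function one gets the required uniform smallness of $\sup_{y \in K'}\E\b{1 \wedge d(Z^y_\delta,y)}$ on compacts; combined with the strong Markov property (available since $\widetilde Z$ is strong Markov w.r.t.\ its right-continuous natural filtration) this upgrades to the stopping-time form of the Aldous condition. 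The remaining verifications — compact containment and the identification of limit points — are routine given the Feller structure.
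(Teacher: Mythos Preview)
Your approach is correct in outline, but it takes a substantially more laborious route than the paper. The paper's proof is a one-line citation: it invokes Theorem~17.25 in Kallenberg's \emph{Foundations of Modern Probability}, which directly gives convergence in distribution on $(\D_E,J_1)$ for Feller processes when the initial conditions converge (Condition~(ii) of that theorem, trivially satisfied here since the semigroup is fixed, implies the relevant Condition~(iv)). What you are doing is essentially re-deriving a special case of that theorem from scratch via finite-dimensional convergence plus an Aldous-type tightness argument. This is a legitimate and self-contained strategy, and it has the merit of making the Feller hypotheses visibly do the work; the paper's route is shorter but hides the mechanism inside the cited reference. One small caveat in your write-up: the claim that $P^s$ maps $C_b$ to $C_b$ does not follow from $P^s C_0 \subset C_0$ together with $P^s\un=\un$ alone, since $C_0$ and constants do not span $C_b$ on a non-compact space. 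You do not actually need it, though: iterating the semigroup on products of $C_0$ test functions keeps you inside $C_0$, and on a locally compact Polish space convergence of probability measures against $C_0$ functions to a probability limit already yields weak convergence. With that adjustment (and the routine coupling of compact containment with the strong Markov step in the Aldous condition, which you allude to) your plan goes through.
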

\begin{proof}
 Let $(z^n)_{n \geq 0}$ be a sequence of initial conditions with $\lim_n z^n = z$ and denote $Z^n \eqdef Z^{z^n}$ as well as $Z \eqdef Z^{z}$. Then, by Condition~(iv) of Theorem~$17.25$ in~\cite{Kall} (Condition~(ii), which implies Condition~(iv), is trivially true in the present context), the sequence of processes $\p{ \p{Z^{n}_{t}}_{t \geq 0}}_{n \geq 0}$ converges in distribution towards $\p{Z_{t}}_{t \geq 0}$ in the Skorokhod space $(\D_E,J_1)$. 
\end{proof}

We then recall the lower and upper continuity of hitting times with respect to the Skorokhod $J_1$ topology.

\begin{Lem}\label{lem:stop_time_cont} Let $B\subset E$, $(z_t)_{t \geq 0}\in \D_E$, and define $t_B(z) \eqdef \inf \{t \geq 0,\   z_t \in \mathring{B}\}$,
 as well as $\overline{t}_B(z)  \eqdef \inf \{t \geq 0,\  z_{t^-} \in \overline{B} \text{\, or \,} z_t \in \overline{B}\}$.
 Consider a converging sequence $\lim_n (z^n_t)_{t \geq 0} = (z_t)_{t \geq 0}$ in $(\D_E,J_1)$. Then $t_B$ is upper continuous in $(\D_E,J_1)$:
 $$ \limsup_n t_B \p{z^n} \leq t_B\p{z},$$
 and $\overline{t}_B$ is lower continuous in $(\D_E,J_1)$:
 $$\overline{t}_B \p{z} \leq \liminf_n \overline{t}_B \p{z^n}.$$ 
\end{Lem}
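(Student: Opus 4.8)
The plan is to treat the two statements separately, both by a standard but careful argument exploiting the defining property of $J_1$ convergence: there exist increasing bijections $\lambda^n$ of $\R_+$ with $\sup_{0\le t\le t_0} d(z^n_{\lambda^n(t)},z_t)\to 0$ and $\sup_{0\le t\le t_0}\abs{\lambda^n(t)-t}\to 0$ for every $t_0$. For the upper continuity of $t_B$, I would fix an arbitrary $s > t_B(z)$; by definition of $t_B$ there is a time $t^\star < s$ with $z_{t^\star}\in\mathring B$. Since $\mathring B$ is open and $d(z^n_{\lambda^n(t^\star)},z_{t^\star})\to 0$, for $n$ large enough $z^n_{\lambda^n(t^\star)}\in\mathring B$, hence $t_B(z^n)\le \lambda^n(t^\star)$. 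Letting $n\to\infty$ and using $\lambda^n(t^\star)\to t^\star < s$ gives $\limsup_n t_B(z^n)\le t^\star < s$; since $s>t_B(z)$ was arbitrary, $\limsup_n t_B(z^n)\le t_B(z)$.

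For the lower continuity of $\overline t_B$, I would argue by contradiction: suppose $\liminf_n \overline t_B(z^n) < \overline t_B(z)$, so along a subsequence $\overline t_B(z^n)\to \alpha < \overline t_B(z)$. Fix $t_0 > \overline t_B(z)$. For each such $n$ there is, by definition of $\overline t_B$, a time $s_n\le\overline t_B(z^n)$ with $z^n_{s_n^-}\in\overline B$ or $z^n_{s_n}\in\overline B$; note $s_n\to\alpha$ up to a further subsequence (it lies in $[0,\overline t_B(z^n)]$, a bounded set). Transporting through the time change, set $t_n=(\lambda^n)^{-1}(s_n)$, so $t_n\to\alpha$ as well (using $\sup\abs{\lambda^n(t)-t}\to 0$). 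The delicate point is that $J_1$ convergence controls $z^n_{\lambda^n(t)}$ only at continuity-type points of $z$, so one cannot simply read off $z_\alpha\in\overline B$. Instead I would use that $z$ has at most countably many jumps and that $z^n_{\lambda^n(\cdot)}\to z_\cdot$ uniformly on $[0,t_0]$: passing to times $t_n^\pm$ approaching $\alpha$ from the left and right appropriately and using that $\overline B$ is closed, one obtains either $z_{\alpha^-}\in\overline B$ or $z_\alpha\in\overline B$, hence $\overline t_B(z)\le\alpha$, contradicting $\alpha<\overline t_B(z)$.

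The main obstacle is precisely this last extraction: matching the left/right limits of the converging path $z$ with the points $z^n_{s_n^-}$ or $z^n_{s_n}$ that lie in $\overline B$, since under $J_1$ a jump of $z^n$ near time $s_n$ may be sent by $\lambda^n$ to a slightly shifted time, and the left limit $z^n_{s_n^-}$ is not directly controlled by the sup-distance $\sup_t d(z^n_{\lambda^n(t)},z_t)$. The clean way around it is to recall the standard characterization that $J_1$ convergence implies convergence of $z^n_t\to z_t$ at every $t$ that is a continuity point of $z$, together with convergence of the associated jump times and jump sizes; since $\overline B$ is closed, taking limits of $z^n$ evaluated at continuity points of $z$ bracketing $\alpha$ on both sides yields the claim. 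I would organize the proof so that the upper-continuity part is done first (it is short and self-contained), and then devote the bulk of the argument to the contradiction scheme for $\overline t_B$, invoking closedness of $\overline B$ at the very end.
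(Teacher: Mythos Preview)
Your approach is essentially the same as the paper's, and the upper-continuity argument is virtually identical. For the lower semicontinuity of $\overline t_B$, the paper proceeds more directly than your bracketing-by-continuity-points plan: it picks times $t_n \to t_0 := \liminf_n \overline t_B(z^n)$ with $d(z^n_{t_n},B)\to 0$ (this formulation also absorbs the minor imprecision in your claim that one can always take $s_n\le \overline t_B(z^n)$, since the infimum need not be attained), uses relative compactness of $\{z^n_t : t\le t_0+1,\ n\ge 0\}$ to extract $z^n_{t_n}\to b\in\overline B$, and then invokes the standard $J_1$ fact that any such accumulation point must lie in $\{z_{t_0^-},z_{t_0}\}$. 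This is exactly the conclusion you are aiming at, just reached in one step rather than via an approximation through continuity points of $z$.
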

\begin{proof}
For the upper continuity, without loss of generality, we can assume that $t_B\p{z} < +\infty$. By right continuity of $(z_t)_{t \geq 0}$ and by definition of $t_B$, $z_{ t_B\p{z}+\eps} \in \mathring{B}$ for some arbitrary small enough $\eps > 0$. By definition of the Skorokhod topology, there is a converging sequence $\lim_n {t_n} = t_B\p{z}+\eps$ in $\R^+$ such that $\lim_n z^n_{t_n} = z_{t_B\p{z}+\eps} \in \mathring{B}$. Thus, since $\mathring{B}$ is open, for any $n$ large enough, $z^n_{t_n} \in \mathring{B}$ so that $t_B\p{z^n} \leq t_n$. The result follows by taking the limit $n \to + \infty$ and then $\eps \to 0$, $\eps$ being arbitrary. \medskip
 
Concerning the lower continuity, set $t_0 \eqdef \liminf_n \overline{t}_B \p{z^n}$, which we assume finite without loss of generality. By definition of the hitting time functional $\overline{t}_B$, we can construct a sequence $(t_n)_{n \geq 1}$ such that, up to extraction, (i) $t_n \leq t_0 +1$, (ii) $\lim_n t_n =t_0$, and (iii) $\lim_n d(z^n_{t^n},B) =0$ where $d$ denotes a distance for the Polish space $E$. On the other hand, by time uniformity in the definition of the $J_1$ convergence $(z^n)_{t \geq 0} \to (z_t)_{t \geq 0}$, the set $\set{z^n_t,\ t \leq t_0+1, \, n \geq 0}$ is bounded. Hence, by compacity, there exists a sub-sequence of $(t_n)_{n \geq 1}$ satisfying $ \lim_n t_n = t_0$ as well as $z^n_{t_n}\rightarrow b$, where $b \in \overline B$ by condition~(iii) above. The convergence in $J_1$ topology implies that the extracted limit $b$ necessarily belongs to $\{z_{t_0^-},z_{t_0}\}$, implying that either $z_{t_0^-} \in \overline B$ or $z_{t_0} \in \overline B$. By definition of $\overline{t}_B$, this means that  $\overline{t}_B \p{z} \leq t_0$.
\end{proof}

We can conclude with the key property that is useful in the proof of Proposition~\ref{pro:Feller_D}. %

\begin{Lem}\label{lem:time_cont_final}
Let $B$ be a subset of $E$, $Z$ a Feller process, and $z \in E$ a given initial condition. Denote $
\overline{\tau}_B  \eqdef \inf \{t \geq 0,\  Z_{t^-} \in \overline{B} \text{\, or \,} Z_t \in \overline{B}\} \in [0,+\infty]$ as well as $\tau_B \eqdef \inf \{t \geq 0,\   Z_t \in \mathring{B}\}\in [0,+\infty]$. Besides, assume that
  \begin{equation}\label{eq:time_cond}
    \P \p{\overline{\tau}_B = \tau_B | Z_0=z} =1.
  \end{equation}
Let $\lim_n z^n= z $ be a given converging sequence of initial conditions. Then the distribution of $\tau_B \in [0,+\infty]$ under $\P\p{ \,\, . \,\, | Z_0=z^n}$ converges when $n \to +\infty$ 
  towards its distribution under $\P\p{\,\, . \,\, | Z_0=z}$. Moreover if $\P( \tau_B < + \infty| Z_0=z ) > 0$, then the distribution of $(Z_{\tau_B},\tau_B)$ under $\P\p{ \,\, . \,\, | Z_0=z^n, \, \tau_B < +\infty}$ converges when $n \to +\infty$ 
  towards its distribution under $\P\p{\,\, . \,\, | Z_0=z, \, \tau_B < +\infty}$. 
\end{Lem}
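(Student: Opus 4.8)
The plan is to transfer the convergence of the \emph{trajectories}, which is exactly Lemma~\ref{lem:cont_feller}, to the hitting time $\tau_B$ and the hitting position $Z_{\tau_B}$, by exhibiting these as functionals of the path that are almost surely continuous under $\P(\,\cdot\,|\,Z_0=z)$. Concretely, I would first invoke Skorokhod's representation theorem: since $\Law\big((Z^{z^n}_t)_{t\ge 0}\big)\to\Law\big((Z^{z}_t)_{t\ge 0}\big)$ in $(\D_E,J_1)$ by Lemma~\ref{lem:cont_feller}, one may realise on a single probability space càdlàg processes $Z^n$ (with the law of $Z^{z^n}$) and $Z$ (with the law of $Z^z$) such that $Z^n\to Z$ almost surely in $(\D_E,J_1)$; denote by $\lambda^n$ the associated time-changes. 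In the notation of Lemma~\ref{lem:stop_time_cont}, $\tau_B=t_B(Z)$ and $\overline\tau_B=\overline t_B(Z)$, and likewise for each $Z^n$.

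\textbf{Convergence of the hitting time.} Lemma~\ref{lem:stop_time_cont} gives $\limsup_n t_B(Z^n)\le t_B(Z)$ and $\liminf_n \overline t_B(Z^n)\ge\overline t_B(Z)$, while trivially $\overline t_B\le t_B$ on every path (entering $\mathring B$ forces touching $\overline B$). Under hypothesis~\eqref{eq:time_cond} one has $\overline t_B(Z)=t_B(Z)$ almost surely, so the chain $\overline t_B(Z)\le\liminf_n\overline t_B(Z^n)\le\liminf_n t_B(Z^n)\le\limsup_n t_B(Z^n)\le t_B(Z)$ collapses, whence $t_B(Z^n)\to t_B(Z)$ almost surely in $[0,+\infty]$. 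Since $[0,+\infty]$ is compact, this is precisely the asserted weak convergence of the law of $\tau_B$.

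\textbf{Convergence of the hitting position.} Work on $\{\tau_B(Z)<+\infty\}$ and set $\tau:=\tau_B(Z)=\overline\tau_B(Z)$. The key claim is that, almost surely, either $Z$ is continuous at $\tau$, or $Z$ jumps at $\tau$ with $Z_{\tau^-}\notin\overline B$. Indeed, on the complement one would have $Z_{\tau^-}\in\overline B$ while, since $\overline\tau_B(Z)=\tau$, $Z_s\notin\overline B$ for all $s<\tau$; then $\tau$ is announced by the stopping times $\sigma_m:=\inf\{s\ge 0:\ \mathrm{dist}(Z_s,\overline B)\le 1/m\}$ (one checks $\sigma_m\uparrow\tau$ with $\sigma_m<\tau$), so $\tau$ is predictable and quasi-left-continuity of the Feller process forbids a jump of $Z$ there --- a contradiction. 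Granting the claim, $Z^n_{t_B(Z^n)}\to Z_\tau$ a.s.\ on $\{\tau_B(Z)<+\infty\}$: if $Z$ is continuous at $\tau$ this follows from $t_B(Z^n)\to\tau$ and the definition of $J_1$ convergence; if $Z$ jumps at $\tau$ with $Z_{\tau^-}\notin\overline B$, a fixed neighbourhood of $Z_{\tau^-}$ stays at positive distance from $\overline B$, which (together with $t_B(Z^n)\to\tau$) forces the first entrance of $Z^n$ into $\mathring B$ to occur at a time $\ge\lambda^n(\tau)-o(1)$, i.e.\ ``from the right'', so that $Z^n_{t_B(Z^n)}$ is close to $Z_{s_n}$ with $s_n\downarrow\tau$ and hence converges to the right limit $Z_\tau$. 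Combined with the previous step, $(Z^n_{\tau_B(Z^n)},\tau_B(Z^n))\to(Z_{\tau_B},\tau_B)$ a.s.\ on $\{\tau_B(Z)<+\infty\}$. For the conditioning, note that on $\{\tau_B(Z)=+\infty\}$ hypothesis~\eqref{eq:time_cond} forces $\overline\tau_B(Z)=+\infty$, i.e.\ the whole trajectory (values \emph{and} left limits) stays off the closed set $\overline B$; by relative compactness of a càdlàg path on compact time intervals it then stays at positive distance from $\overline B$ on every $[0,T]$, hence so does $Z^n$ for $n$ large, and $\tau_B(Z^n)\to+\infty$. Together with the a.s.\ convergence on $\{\tau_B(Z)<+\infty\}$ this yields the stated convergence of the conditional laws of $(Z_{\tau_B},\tau_B)$ (which is vacuous in the applications, where $\tau_B<+\infty$ almost surely).

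\textbf{Where the difficulty lies.} The hitting-time statement is routine once Lemmas~\ref{lem:cont_feller} and~\ref{lem:stop_time_cont} are in hand. The real work is the hitting \emph{position}: recovering $Z_{\tau_B}$ in the limit despite the fact that a Feller process may jump exactly at $\tau_B$. This needs (a) ruling out the ``bad'' jump configuration $\{Z_{\tau_B^-}\in\overline B\}$, where the input is quasi-left-continuity applied to the \emph{predictable} hitting time of $\overline B$ --- and this is where \eqref{eq:time_cond} enters decisively --- and (b) a careful handling of the $J_1$ time-changes to show that in the surviving configuration the approximating paths can only enter $\mathring B$ ``from the right''.
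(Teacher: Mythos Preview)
Your proof is correct and follows essentially the same route as the paper: Skorokhod embedding via Lemma~\ref{lem:cont_feller}, the sandwich $\overline t_B(Z)\le\liminf_n\overline t_B(Z^n)\le\limsup_n t_B(Z^n)\le t_B(Z)$ from Lemma~\ref{lem:stop_time_cont} together with~\eqref{eq:time_cond} for the hitting time, and quasi-left-continuity through an announcing sequence $\sigma_m\uparrow\tau$ to exclude the configuration $\{Z_{\tau^-}\in\overline B,\ Z_{\tau^-}\neq Z_\tau\}$ for the hitting position. The only cosmetic difference is in packaging the last step: the paper argues that any accumulation point of $Z^n_{\tau^n_B}$ lies in $\{Z_{\tau^-},Z_\tau\}\cap\overline B$ and then rules out $Z_{\tau^-}$, whereas you split into the two surviving cases (continuity at $\tau$, or a jump with $Z_{\tau^-}\notin\overline B$) and push the $J_1$ time-changes by hand --- both reach the same conclusion, and your explicit treatment of the conditioning on $\{\tau_B<+\infty\}$ is a point the paper leaves to ``dominated convergence''.
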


\begin{proof}

Using Lemma~\ref{lem:cont_feller} and a Skorokhod embedding argument, we can construct a sequence $(Z^n_t)_{t \geq 0}$ of processes with initial conditions $(z^n)_{n \geq 0}$ such that $\lim_n Z^n = Z$ in $(\D_E,J_1)$ almost surely. We claim that (i) $\lim_n \tau^n_B = \tau_B$, and (ii) $\lim_n Z^n_{\tau^n_B} = Z_{\tau_B}$ on the event $\set{\tau_B < +\infty}$, which enable to conclude by dominated convergence. \medskip 

On the one hand, Lemma~\ref{lem:stop_time_cont} together with~\eqref{eq:time_cond} directly implies (i). \medskip 

On the other hand, let us work on the event $\set{\tau_B < +\infty}$. The definition of the Skorokhod topology implies that the sequence $(Z^n_{\tau^n_B})_{n \geq 1}$ has its accumulation points included in $\{ Z_{\tau^\infty_B-} , Z_{\tau^\infty_B}\}$. Since by construction $Z^n_{\tau^n_B} \in \overline B$, these accumulation points are also included in $\overline B$. 
We now claim that by quasi-left continuity of $Z$ and condition~\eqref{eq:time_cond}, $Z_{\tau_B^-} \in \overline B \Rightarrow Z_{\tau_B^-}=Z_{\tau_B}$, which in turn implies from the discussion above that $\lim_n Z^n_{\tau^n_B} = Z_{\tau_B}$, and hence proves (ii) above. Indeed, defining $B_{k} = \{x,\ d(x,B) < 1/k\}$, one has by construction $\lim_k \tau_{B_k}=\overline{\tau}_B$ which also equals $\overline{\tau}_B = \tau_B$ by Assumption~\eqref{eq:time_cond}. Then quasi-left continuity implies that $\lim_k Z_{\tau_{B_k}}= Z_{\lim_k \tau_{B_k}}=Z_{\tau_{B}}$, while $Z_{\tau_B^-} \in \overline B$ implies $\tau_{B_k} < \tau_B$ so that $\lim_k Z_{\tau_{B_k}} = Z_{\tau_B^-}$, hence the claimed result.%
\end{proof}

\begin{proof}[Proof of Proposition~\ref{pro:Feller_D}]

The Feller property classically implies the quasi-left continuity of $t \mapsto \widetilde{X}$ and thus Condition~(i) of Assumption~\ref{ass:D} for all jump times except perhaps $\tau_\partial$. \medskip

Let $(x^n,t^n)$ be a sequence in $F \times [0,T]$ converging to $(x,t) \in F \times [0,T]$. 
We claim that 
\begin{equation}\label{eq:cont} 
\lim_n \E_{x^n}\b{ \ph(\widetilde{X}^n_{t_n}) \un_{\tau_\partial^n> t^n}} 
= \E_{x}\b{ \ph(\widetilde{X}_{t}) \un_{\tau_\partial>t}},
\end{equation}
which will ensure Condition~(ii) of Assumption~\ref{ass:D}.\medskip

First, we claim that $\P_{x}(\tau_\partial=t)=0$. 
Indeed, since $\widetilde{X}$ is Feller hence quasi left continuous, it cannot jump at a given $t \geq 0$ so that
$\{\tau_\partial=t\}= \{\tau_\partial=t\ \text{and}\ \widetilde{X}_t= \widetilde{X}_{t^-}\}$. Thus $\set{\tau_\partial=t}$ implies $\widetilde{X}_{t} \in \partial F$, which has probability zero by Condition~(i) in Proposition~\ref{pro:Feller_D}.
\medskip 

Second, we claim that $$
\P_{x}\p{ \overline{\tau}_\partial =\tau_\partial} = 1.
$$
where $\overline{\tau}_\partial \eqdef \inf \{t,\ \widetilde{X}_{t^-} \in E \setminus F \, \text{or} \, \widetilde{X}_{t} \in E \setminus F \} < \tau_\partial$. Indeed, by the strong Markov property of Feller processes, it is enough to prove that $\P_{\widetilde{X}_{\overline{\tau}_\partial}}(\tau_\partial>0)=0$, which is just a consequence of Condition~(ii) in Proposition~\ref{pro:Feller_D}. \medskip

Finally, according to Lemma~\ref{lem:cont_feller}, a Skorokhod embedding argument shows that we can assume the almost sure convergence $\lim_n \widetilde{X}^n = \widetilde{X}$ in $(\D_E,J_1)$. Since $\widetilde{X}$ is Feller hence quasi left continuous, $\lim_n \widetilde{X}^n_{t_n} = \widetilde{X}_{t}$. To obtain~\eqref{eq:cont}, it remains to show that $\lim_n \tau_\partial^n = \tau_\partial$. This follows from Lemma~\ref{lem:time_cont_final} by simply taking $B=E \setminus \overline F$.

\end{proof}

\subsection{Stopping times and martingales}
\begin{Lem}\label{albcios}
 Let $\tau$ be a stopping time on a filtered probability space, and $U$ an integrable and $\calF_\tau$ measurable random variable such that $\E \b {U | \calF_{\tau^-}}=0$. Then the process $t \mapsto U \un_{t \geq  \tau}$ is a c\`adl\`ag martingale. 
\end{Lem}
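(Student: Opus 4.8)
The plan is to verify the three defining properties of a càdlàg martingale for the process $Z_t \eqdef U\un_{t\ge\tau}$ one at a time; only the martingale identity is nontrivial, and it will reduce to a $\sigma$-field identification together with the single hypothesis $\E[U\mid\calF_{\tau^-}]=0$.

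First I would dispatch the routine points. The path $t\mapsto\un_{t\ge\tau}$ equals $0$ on $[0,\tau)$ and $1$ on $[\tau,\infty)$, hence is càdlàg, and multiplying by the fixed random variable $U$ preserves this, so $Z$ is càdlàg; integrability follows from $\abs{Z_t}=\abs{U}\un_{\tau\le t}\le\abs{U}\in L^1$; and adaptedness follows from the standard fact that $U\un_{\tau\le t}$ is $\calF_t$-measurable whenever $U$ is $\calF_\tau$-measurable (verify it for $U=\un_B$ with $B\in\calF_\tau$, where $B\cap\{\tau\le t\}\in\calF_t$ by definition of $\calF_\tau$, then extend by linearity and monotone limits), since $Z_t=U\un_{\tau\le t}$.

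For the martingale property, fix $s\le t$ and write $Z_t=U\un_{\tau\le s}+U\un_{s<\tau\le t}=Z_s+U\un_{s<\tau\le t}$. Since $Z_s$ is $\calF_s$-measurable, it suffices to prove $\E[U\un_A\un_{s<\tau\le t}]=0$ for every $A\in\calF_s$. The key point is that $A\cap\{s<\tau\le t\}\in\calF_{\tau^-}$: recall that $\calF_{\tau^-}$ is generated by $\calF_0$ together with the sets $B\cap\{r<\tau\}$ with $r\ge0$, $B\in\calF_r$; taking $B=A$, $r=s$ gives $A\cap\{s<\tau\}\in\calF_{\tau^-}$, taking $B=\Omega$, $r=t$ gives $\{t<\tau\}\in\calF_{\tau^-}$, and hence $A\cap\{s<\tau\le t\}=\big(A\cap\{s<\tau\}\big)\cap\{t<\tau\}^c\in\calF_{\tau^-}$. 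Consequently, as $\un_{A\cap\{s<\tau\le t\}}$ is bounded and $\calF_{\tau^-}$-measurable,
$$\E[U\un_A\un_{s<\tau\le t}]=\E\big[\E[U\mid\calF_{\tau^-}]\,\un_{A\cap\{s<\tau\le t\}}\big]=0,$$
which yields $\E[Z_t\mid\calF_s]=Z_s$. The case $\tau=+\infty$ needs no separate treatment, since $Z$ vanishes on that event and all the indicators above are then zero.

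The only real obstacle is the measurability bookkeeping in the last step — recognizing that both $\{t<\tau\}$ and the intersection of $\{s<\tau\le t\}$ with an arbitrary $\calF_s$-set belong to $\calF_{\tau^-}$, which is precisely what makes the hypothesis on $U$ applicable; everything else is immediate.
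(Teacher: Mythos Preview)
Your proof is correct and follows essentially the same approach as the paper: both arguments split $\un_{t\ge\tau}=\un_{s\ge\tau}+\un_{s<\tau\le t}$, observe that the first term is already $\calF_s$-measurable, and kill the second term by recognizing that the relevant events lie in $\calF_{\tau^-}$ so that the hypothesis $\E[U\mid\calF_{\tau^-}]=0$ applies. Your version is slightly more explicit in that you test against an arbitrary $A\in\calF_s$ and spell out the c\`adl\`ag, integrability, and adaptedness checks, but the substance is identical.
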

\begin{proof}
 Let $t > s$ be given. First remark that $\un_{t \geq  \tau} = \un_{s \geq  \tau} +  \un_{s <  \tau} \un_{t \geq  \tau} $. Then by definition of $\calF_\tau$, $U\un_{s \geq  \tau} $ is $\calF_s$-measurable, so that
 \begin{align*}
 \E \b{ U \un_{t \geq  \tau} | \calF_s} =  U\un_{s \geq  \tau} + \E \b{U \un_{t \geq  \tau} 
  | \calF_s}\un_{s <  \tau}.
  \end{align*}
Next, by definition of $\calF_{\tau^-}$, $\E \b{U \un_{t \geq  \tau} 
  | \calF_s}\un_{s <  \tau} $ and $\un_{t \geq  \tau}$ are $\calF_{\tau^-}$-measurable, so that
  \begin{align*}
   \E \b{U \un_{t \geq  \tau} 
  | \calF_s}\un_{s <  \tau}  =\E \b{\E \b{ U | \calF_{\tau^-} }\un_{t \geq  \tau} 
  | \calF_s}\un_{s <  \tau}  =0.
   \end{align*}
The result follows.
\end{proof}

\subsection{Proof of Lemma~\ref{mzoecj}: \ref{ass:D} $\Rightarrow$ \ref{ass:Dp}}\label{lackalicj}

The following obvious weakening of Assumption~\ref{ass:D} is the raw condition that is required in the proof of Lemma~\ref{mzoecj}.

\begin{itemize}
\item[(1)] For any initial condition $ x \in F$, the killing time has an atomless distribution, that is
$$\P\p{ \tau_\partial = t|X_0=x} = 0 \qquad \forall t \geq 0.$$
\item[(2)] There exists a space $\calD$ of bounded measurable real-valued functions on $F$,
which contains at least the indicator function $\un_F$, and such that for any $\ph \in \calD$, for any initial condition $x \in F $, the jumps of the \cadlag version of the martingale $t \mapsto Q^{t_0-t}(\ph)(X_{t})$ have an atomless distribution:
$$
\P\p{ \Delta Q^{t_0-t}(\ph)(X_{t}) \neq 0 | X_0 =x} = 0 \qquad  \forall  0 \leq t \leq t_0.
$$
\end{itemize}

Our goal now is to prove that conditions (1) and (2) above imply Assumption \ref{ass:Dp}. Throughout the proof, let $1\leq m \neq n \leq N$ and $j,k \geq 0$ be given integers. We recall that, by convention, $\tau_{n,0}=\tau_{m,0}=0$.\medskip

(i) It is sufficient to prove that $\P\p{\tau_{n,k+1} = \tau_{m,j+1} \ \& \  \tau_{m,j} \leq \tau_{n,k} }=0$, since taking the countable union of such events over $j,k \geq 0$ and $1\leq m \neq n \leq N$ will yield the result. Conditionally on $\calF_{\tau_{n,k} }$ and $\set{\tau_{m,j} \leq \tau_{n,k} }$, the two branching times $\tau_{n,k+1}$ and $\tau_{m,j+1}$ are independent. Moreover, Assumption (1) implies that conditionally on $\calF_{\tau_{n,k}}$, $\tau_{n,k+1}$ has an atomless distribution. We deduce that 
$$\P(\tau_{n,k+1} = \tau_{m,j+1} \ \& \  \tau_{m,j} \leq \tau_{n,k} | \calF_{\tau_{n,k} })=0.$$ 

(ii) According to Proposition 1.3 in \cite{js03}, we can define a countable sequence of stopping times $\sigma_{m,a}$ with $ a\geq 1$ that exhaust the jumps of $\L^m_t$ for $ \tau_{m,j} \vee \tau_{n,k} \leq t \leq \tau_{m,j+1} $.  Conditionally on $\calF_{\tau_{n,k}}$ and $\set{\tau_{m,j} \leq \tau_{n,k}}$, the two processes $(\L^n_t)_{t < \tau_{n,k+1}}$ and $(\L^m_t)_{t < \tau_{m,j+1}}$ are independent. Moreover, Assumption~(2) implies that conditionally on $\calF_{\tau_{n,k}}$, $\p{\L^n_{t} = Q^{T-t}(\ph)(X^n_t)}_{\tau_{n,k} \leq t < \tau_{n,k+1}}$ has jumps with atomless distribution. %
As a consequence, for each $a\geq 1$,  
$$\P\p{\left.\Delta \L^n_{\sigma_{m,a}} \neq 0 \ \& \ \tau_{m,j} \leq \tau_{n,k} \right|\calF_{\tau_{n,k}} }= 0.$$ Taking the countable union of such events over $a\geq 1$, $j,k \geq 0$ and $1\leq m \neq n \leq N$ gives the result.\medskip

(iii) One can apply the same reasoning as for (ii) with $\tau_{m,j+1}$ instead of $\sigma_{m,a}$.

\subsection{Integration rules}

Remember that $p^N_t=(1-1/N)^{N\calN_t}$, so that $p^N_0=1$. Recall that $\sum_{j=1}^\infty (1-1/N)^{j-1}=N$.%

\begin{Lem}\label{oachz} Assume $N \geq 2$.
Let $t \mapsto z^N_t$ be a \cadlag semi-martingale, $c>0$ a deterministic constant, and consider the following conditions, satisfied for any branching time $\tau_j$, $j \geq 1$:
$$(i)\  |\Delta z_{\tau_j}^N| \leq c/N,\ (ii)\ |z^N_{\tau_{j}^-}| \leq c(1-1/N)^j,\ (iii)\ |\Delta z_{\tau_j}^N| \leq c(1-1/N)^j/N.$$
If~(i) holds true, one has
\begin{align}
\int_0^t   p^N_{s^-} dz_{s}^N     =  p^N_tz_t^N - z_0^N -  \int_0^t z_{s^-}^N dp^N_s+ O\p{1/N}. \label{eq:ipp1}
\end{align}
If (ii)~holds true, one has
\begin{equation}\label{eq:ipp2}
 \int_0^t z^N_{s^-} \p{p^N_{s^-}}^{-1} dp^N_s = \int_0^t z^N_{s^-} d\ln p^N_s + O\p{1/N}. 
\end{equation}
Finally, if~(iii) holds true, one has
\begin{equation}\label{eq:ipp3}
  \int_0^t z^N_{s^-} d\ln p^N_s  = z^N_t \ln p^N_t -  \int_0^t \ln p^N_{s^-} dz_s^N + O \p{1/N}.
\end{equation}
In all equations above, the $O$ notation only depends on the deterministic constant $c$.
\end{Lem}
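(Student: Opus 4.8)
All three identities follow the same template: each is obtained from the integration-by-parts formula for \cadlag semimartingales, $X_tY_t = X_0Y_0 + \int_0^t X_{s^-}\,dY_s + \int_0^t Y_{s^-}\,dX_s + [X,Y]_t$, and the announced $O(1/N)$ remainder is, up to sign, the covariation term $[X,Y]_t$, which I would bound by an explicit geometric sum. The three structural facts I would use are: $p^N$ is a decreasing, piecewise-constant process whose only jumps occur at the branching times $\tau_j$; at $\tau_j$ one has $\Delta p^N_{\tau_j}=-\tfrac1N p^N_{\tau_j^-}$ with $p^N_{\tau_j^-}=(1-1/N)^{j-1}$, hence $\Delta\ln p^N_{\tau_j}=\ln(1-1/N)$; and $\sum_{j\geq1}(1-1/N)^{j-1}=N$. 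Since $N\geq2$, the quantity $1/(1-1/N)$ is at most $2$, which is what makes every constant below depend only on $c$.

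For \eqref{eq:ipp1}, I would apply integration by parts with $X=p^N$, $Y=z^N$, and use $p^N_0=1$; this gives $\int_0^t p^N_{s^-}\,dz^N_s = p^N_tz^N_t - z^N_0 - \int_0^t z^N_{s^-}\,dp^N_s - [p^N,z^N]_t$, so it suffices to check $[p^N,z^N]_t=O(1/N)$. Because $p^N$ is pure-jump, $[p^N,z^N]_t=\sum_{\tau_j\leq t}\Delta p^N_{\tau_j}\,\Delta z^N_{\tau_j}$, and Condition~(i) yields $|\Delta p^N_{\tau_j}\,\Delta z^N_{\tau_j}|\leq \tfrac1N(1-1/N)^{j-1}\cdot\tfrac cN$; summing over $j\geq1$ gives $\tfrac{c}{N^2}\cdot N=c/N$.

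For \eqref{eq:ipp2}, there is no integration by parts: both sides are pure-jump integrals over $\{\tau_j\}$, and evaluating the jumps shows $\int_0^t z^N_{s^-}(p^N_{s^-})^{-1}\,dp^N_s = -\tfrac1N\sum_{\tau_j\leq t}z^N_{\tau_j^-}$ while $\int_0^t z^N_{s^-}\,d\ln p^N_s = \ln(1-1/N)\sum_{\tau_j\leq t}z^N_{\tau_j^-}$, so their difference is $\sum_{\tau_j\leq t}z^N_{\tau_j^-}\bigl(-\tfrac1N-\ln(1-1/N)\bigr)$. The only analytic input is the elementary bound $0\leq -\tfrac1N-\ln(1-1/N)=\sum_{k\geq2}\tfrac{1}{kN^k}\leq\tfrac{1}{2N(N-1)}\leq\tfrac1{N^2}$ for $N\geq2$; together with Condition~(ii), $|z^N_{\tau_j^-}|\leq c(1-1/N)^j$, the sum is at most $\tfrac{c}{N^2}\sum_{j\geq1}(1-1/N)^j\leq c/N$. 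For \eqref{eq:ipp3}, I would again use integration by parts, with $X=z^N$ and $Y=\ln p^N$, noting $\ln p^N_0=0$: $\int_0^t z^N_{s^-}\,d\ln p^N_s = z^N_t\ln p^N_t - \int_0^t \ln p^N_{s^-}\,dz^N_s - [z^N,\ln p^N]_t$, and since $\ln p^N$ is pure-jump, $[z^N,\ln p^N]_t = \ln(1-1/N)\sum_{\tau_j\leq t}\Delta z^N_{\tau_j}$; using $|\ln(1-1/N)|\leq\tfrac{1}{N-1}\leq\tfrac2N$ and Condition~(iii), $|\Delta z^N_{\tau_j}|\leq c(1-1/N)^j/N$, this is at most $\tfrac{2c}{N^2}\sum_{j\geq1}(1-1/N)^j\leq 2c/N$.

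This is mostly bookkeeping and no single step is a serious obstacle. The two points requiring a little care are: matching each identity with the correct bracket term and the correct geometric series (the series in \eqref{eq:ipp1} sums to $N$, those in \eqref{eq:ipp2}--\eqref{eq:ipp3} to $N-1$, and none grows faster than $N$ — which is precisely why Conditions~(ii) and~(iii) must carry the damping factor $(1-1/N)^j$ while Condition~(i) does not, and hence why the lemma lists three separate hypotheses); and the Taylor estimate in \eqref{eq:ipp2}, which is the only non-combinatorial inequality and is entirely elementary.
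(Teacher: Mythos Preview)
Your proposal is correct and follows essentially the same approach as the paper: integration by parts for \eqref{eq:ipp1} and \eqref{eq:ipp3} with the bracket term controlled by a geometric sum, and a direct comparison of the jump increments for \eqref{eq:ipp2} using the elementary estimate $|{-}1/N-\ln(1-1/N)|=O(1/N^2)$. You supply slightly more explicit constants than the paper does, but the argument is otherwise identical.
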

\begin{proof}
 Equation~\eqref{eq:ipp1} comes from the integration by parts formula defining the quadratic variation %
 $$ p^N_tz_t^N - p^N_0z_0^N=\int_0^t z_{s^-}^N dp^N_s+\int_0^t   p^N_{s^-} dz_{s}^N+[p^N,z^N]_t,$$
 and the fact that $\Delta p_{\tau_j}^N=-(1-1/N)^{j-1}/N$ for all $j\geq 1$ so that
 $$[p^N,z^N]_t=\sum_{j \geq 1}\Delta p_{\tau_j}^N\Delta z_{\tau_j}^N=O\p{1/N}.$$ 
For \eqref{eq:ipp2}, notice that for any jump time $\tau_j$, $j \geq 1$, one has  $\p{p^N_{\tau_j^-}}^{-1} \Delta p^N_{\tau_j}=-\frac1N$
 as well as $ \Delta \ln p^N_{\tau_j}=\log(1-\frac1N)$, implying that
 \begin{equation*}
 \abs{ \int_0^t z^N_{s^-} \b{\p{p^N_{s^-}}^{-1} d p^N_s- d\ln p^N_s} } \leq \sum_j |z^N_{\tau_j^-}| \abs{\log(1-\frac1N)+\frac1N}  = O\p{1/N}. 
\end{equation*}
Similarly to \eqref{eq:ipp1}, Equation~\eqref{eq:ipp3} is merely an integration by parts formula, with this time
\begin{align*}
&[\ln p^N,z^N]_t=\sum_{j\geq 1}\Delta \ln p_{\tau_j}^N\Delta z_{\tau_j}^N= \log(1-\tfrac{1}{N})\sum_{j}\Delta z_{\tau_j}^N=O\p{1/N}.\qedhere
\end{align*}
\end{proof}

\bibliographystyle{plain}
\bibliography{biblio-cdgr}
\end{document}